\theoremstyle{plain}
\newtheorem{thm}{Theorem}[subsection]
\newtheorem{prop}[thm]{Proposition}
\newtheorem{lemma}[thm]{Lemma}
\newtheorem{cor}[thm]{Corollary}
\newtheorem{conjecture}{Conjecture}
\newtheorem{defpr}[thm]{Definition/Proposition}
\theoremstyle{definition}
\newtheorem{defn}[thm]{Definition}
\newtheorem*{defn*}{Definition}
\newtheorem*{question*}{Question}
\newtheorem{example}[thm]{Example}
\newtheorem*{example*}{Example}
\newtheorem{rem}[thm]{Remark}
\newtheorem*{rem*}{Remark}
\newtheorem*{nota*}{Notation}
\newcommand{\field}[1]{\mathbb{#1}}
\newcommand{\N}{\field{N}}
\newcommand{\Z}{\field{Z}}
\newcommand{\F}{\field{F}}
\newcommand{\ideal}[1]{\mathfrak{#1}}
\newcommand{\m}{\ideal{m}}
\newcommand{\p}{\ideal{p}}
\newcommand{\q}{\ideal{q}}
\newcommand{\func}[1]{\mathrm{#1} \,}
\newcommand{\Spec}{\func{Spec}}
\newcommand{\hgt}{\func{ht}}
\newcommand{\ra}{\rightarrow}
\newcommand{\be}{\begin{enumerate}}
\newcommand{\ee}{\end{enumerate}}
\newcommand{\li}
 {\leftfootline}
\newcommand{\onto}{\twoheadrightarrow}
\newcommand{\cM}{\mathcal{M}}
\newcommand{\cU}{\mathcal{U}}
\renewcommand{\phi}{\varphi}
\DeclareMathOperator{\Frac}{Frac}
\let\int\relax
\DeclareMathOperator{\int}{i}
\author{Neil Epstein}
\address{Department of Mathematical Sciences \\ George Mason University \\ Fairfax, VA  22030\\
USA}
\email{nepstei2@gmu.edu}
\author{Lorenzo Guerrieri}
\address{Instytut Matematyki \\
Jagiellonian University \\
30-348 Krak\'ow \\ Poland}
\email{lorenzo.guerrieri@uj.edu.pl}
\thanks{Lorenzo Guerrieri is supported by the grant MAESTRO NCN-UMO-2019/34/A/ST1/00263 -- Research in Commutative Algebra and Representation Theory}
\title{An introduction to reciprocal complements of integral domains}
\date{August 5, 2026}
\dedicatory{Dedicated to the memory of Jesse Elliott}
\begin{document}
\begin{abstract}
Given an integral domain $D$ with fraction field $F$, its \emph{reciprocal complement} is the subring of $F$ generated by all $1/d$ for nonzero $d$ in $D$.  This paper serves doubly as a survey of the current state of the field and an update with new results and connections.
\end{abstract}
\maketitle

\section{Introduction}
Let $D$ be an integral domain, and $F$ its fraction field.  The finite sums of the elements $\frac 1d$, $d\in D \setminus \{0\}$, form a subring of $F$, called the \emph{reciprocal complement} $R(D)$ of $D$.

The above is extremely easy to state and prove.  However, this appears to be a new construction, first defined in \cite[Definition 2.1]{nme-Euclidean}.  In this article, we survey what has been determined thus far about this construction, both in general and as applies to polynomial rings, affine coordinate rings of algebraic curves, and other special cases.

The notion first arose as a response to the notion of an \emph{Egyptian} domain, defined and developed in \cite{GLO-Egypt} by the second named author, Loper, and Oman.  If $D$ is a domain, with fraction field $K$, an element $\alpha \in K$ is $D$-\emph{Egyptian} (or just \emph{Egyptian} if the context is clear) if it can be written as a sum of reciprocals of elements of $D$.  Then $D$ is an \emph{Egyptian domain} if every element of $K$ (equivalently, every element of $D$) is Egyptian.  Then in the language of reciprocal complements, $D$ is Egyptian if $R(D)=\Frac D$.  The first named author defined reciprocal complements in \cite{nme-Euclidean} to explore just how far a domain is from being Egyptian.  The subject of that paper was reciprocal complements of Euclidean domains, including $k[x]$ and $\Z$.  The answers were nice and clean -- for such a ring $D$, $R(D)$ is always a field or a DVR.

It was then a logical step for the authors and Alan Loper to investigate $R(D)$ when $D=k[x_1, \ldots, x_n]$, for $n\geq 2$ \cite{nmeGuLo-poly}.  It surprised us to learn that the structure of $R(D)$ in this case is very interesting indeed.  We devote a subsection to this below. In \cite{nmeGuLo-poly} and also in the second named author's article \cite{Gu-recomp}, general properties of $R(D)$ were investigated.  For instance $R(D)$ is always local, and any prime ideal of $R(D)$ is generated by elements of the form $1/f$, $f\in D$ not Egyptian.

Several connections have been made to algebraic geometry, order theory, combinatorics and so forth: Dario Spirito \cite{Spi-recocurve} found interpretations in algebraic geometry for the behavior of $R(D)$ when $D$ is the affine coordinate ring of an algebraic curve.
The paper \cite{Gu-recomp}  made a study of reciprocal complements of semigroup rings for totally ordered pointed semigroups.  In \cite{Elnme-factroids}, Elliott and the first named author found a criterion for elements of $K$ to be in $R(D)$ in terms of a new algebraic structure within $D$ called factroids.

Beyond the introduction you are reading, the rest of this paper is broken up into two sections: the old and the new.  In Section~\ref{sec:prev}, we have attempted to collect most of the salient information on reciprocal complements that one can find in other papers. We found that we can lay it out in a more sensible order, and in some cases prove things in more generality, than we could when we were first learning the subject ourselves.  But this is a growing field, with sprouts in many directions, so we couldn't resist this opportunity to also share some new results, given in Section~\ref{sec:new}.  There, we give connections to the notion of \emph{unit-additivity} (by the first named author and Shapiro \cite{nmeSh-unitadd}), we give a careful accounting of the factor rings of $R(k[x,y])$ by prime ideals, we explore various notions of \emph{irreducibility} of elements of $D$ as reflected in the behavior of $R(D)$, and we provide a one-to-one correspondence between $\Spec R(D)$ and a certain class of subrings of $D$. 

\emph{Notation:} For an integral domain $D$, set $D^\circ := D \setminus \{0\}$, $D^\times = $ the group of units of $D$,  and $\Frac D := $ the fraction field of $D$.

\section{Results from other papers}\label{sec:prev}

The main object of study here is the reciprocal complement of an integral domain: \begin{defn*}
Let $D$ be an integral domain, with fraction field $F$.  The \emph{reciprocal complement} $R(D)$ of $D$ is the subring of $F$ generated by the set $\{\frac 1d\mid d\in D^\circ\}$.

An element $d\in D$ is \emph{Egyptian} if $d\in R(D)$.
\end{defn*}
It is easily seen that $R(D)$ consists merely of all sums of the form $\sum_{i=1}^n \frac 1{d_i}$, $d_i \in D^\circ$, $n \geq 0$.
\subsection{Egyptian and Bonaccian domains}

The first natural question about reciprocal complements is: When is $R(D)$ equal to the entire fraction field of $D$?  Such a domain is called Egyptian.  That is: \begin{defn}[{\cite[Definition 1]{GLO-Egypt}}]
Let $D$ be an integral domain, with fraction field $F$.  We say $D$ is \emph{Egyptian} if $R(D) = F$.
\end{defn}
In fact, Egyptian domains were defined before reciprocal complements were, and with different terminology.  Indeed, the paper \cite{GLO-Egypt} (by the second named author along with Loper and Oman), in preprint form, precedes all of our other work cited here, despite how it may appear from publication years. In the original paper, an Egyptian domain was one where every nonzero $d\in D$ could be written as a sum of reciprocals of \emph{distinct} nonzero elements of $D$.   However, surprisingly, the qualifier `distinct' can be omitted.  That is, we have the following, restated in the language of reciprocal complements: \begin{thm}[{\cite[Theorem 2]{GLO-Egypt}}]
Let $D$ be an integral domain, and let $\alpha \in \Frac D$.  If $\alpha \in R(D)$, then $\alpha$ can be written as a sum of reciprocals of distinct elements of $D$.
\end{thm}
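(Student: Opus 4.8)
The plan is to prove directly that $R(D) \subseteq E(D)$, where $E(D)$ denotes the set of all sums $\sum_{a \in A} \frac{1}{a}$ taken over \emph{finite subsets} $A \subseteq D^\circ$ (the reverse inclusion $E(D) \subseteq R(D)$ being trivial). As recorded just above, every element of $R(D)$ is a finite sum $\sum_{i=1}^{n} \frac{1}{d_i}$ of reciprocals, so it is enough to check that $E(D)$ is closed under adding one more reciprocal: given $\gamma = \sum_{a \in A} \frac{1}{a} \in E(D)$ and $d \in D^\circ$, I want $\gamma + \frac{1}{d} \in E(D)$. If $d \notin A$ this is immediate, so the whole content is the case $d \in A$, where $\gamma + \frac{1}{d} = \frac{2}{d} + \delta$ with $\delta = \sum_{a \in A \setminus \{d\}} \frac{1}{a} \in E(D)$ not having $d$ among its denominators. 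It all reduces to absorbing the single term $\frac{2}{d}$.

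If $\chr D = 2$ then $\frac{2}{d} = 0$ and we are done. If $2$ is a unit of $D$ (in particular whenever $\chr D$ is odd), then $\frac{2}{d} = \frac{1}{2^{-1}d}$, so we must add the reciprocal $\frac{1}{2^{-1}d}$ to $\delta$; repeating, we walk along the orbit $d,\ 2^{-1}d,\ 2^{-2}d, \dots$, each collision being removed by the collapse $\frac{1}{2^{-j}d} + \frac{1}{2^{-j}d} = \frac{1}{2^{-(j+1)}d}$. In characteristic $0$ this orbit is infinite, hence eventually leaves the finite denominator set of $\delta$; in characteristic $p$ it is finite (of size the multiplicative order $k$ of $2$ modulo $p$), so after at most $k$ such steps the walk either leaves that set or returns to $d$ with every collision already resolved. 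In all cases the rewriting terminates with a sum of pairwise distinct reciprocals equal to $\gamma + \frac{1}{d}$.

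The remaining case is $\chr D = 0$ with $2$ not a unit (so $\Z$ sits inside $D$ without $\frac12$). Here I would break $\frac1d$ apart with the elementary identity $\frac{1}{d} = \frac{1}{d+1} + \frac{1}{d(d+1)}$ --- valid for $d \neq 0, -1$, with two distinct new denominators once $d \neq 1$ --- together with its mirror $\frac{1}{d} = \frac{1}{d-1} + \frac{1}{d - d^2}$ for use near the negative integers and an ad hoc expansion for $d = \pm 1$ such as $\frac{1}{\pm 1} = \frac{1}{\pm 2} + \frac{1}{\pm 3} + \frac{1}{\pm 6}$, applying these over and over to any denominator that collides with one already present until $\frac{2}{d} + \delta$ has been rewritten as a sum of distinct reciprocals. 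Termination holds because, $\Z \hookrightarrow D$ being torsion-free, the additive chains $d,\ d \pm 1,\ d \pm 2, \dots$ never cycle and so escape the finite set of forbidden denominators after finitely many steps, while the multiplicative branch $d \mapsto d(d \pm 1)$ strictly increases a degree-type complexity function --- one first replaces $D$ by a finitely generated subdomain containing all the data in play and picks a Noether normalization --- so the recursion on the remaining colliding terms is well-founded.

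The hard part is precisely this last step: pinning down a complexity function for which $d \mapsto d(d\pm1)$ strictly increases, juggling the sign cases, and disposing of the small exceptional denominators (the values $0$ and $\pm1$, and negative integers at which the upward expansion would pass through $0$), together with the final check that the denominators produced really form a set rather than a multiset. The reduction of the first paragraph, the characteristic-$2$ and $2$-invertible cases, and the verification of the displayed fraction identities are all routine.
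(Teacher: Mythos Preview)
The paper itself does not give a proof of this statement; it merely cites \cite[Theorem 2]{GLO-Egypt}. So I can only evaluate your argument on its own terms.

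Your reduction (showing $E(D)$ is closed under adding a single reciprocal) is correct, and the cases $\chr D=2$ and ``$2\in D^\times$'' are handled correctly. In the latter, termination is in fact simpler than you indicate: each collision removes one element from the finite denominator set carried by $\delta$, so the walk must halt in at most $|A\setminus\{d\}|$ steps, independent of the orbit length of $2^{-1}$.

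The remaining case, $\chr D=0$ with $2\notin D^\times$, is a genuine gap, and you say as much. The fix you sketch does not close it. First, Noether normalization is a theorem about finitely generated algebras over a \emph{field}; here the base is $\Z$, and the natural move of tensoring up to $D\otimes_\Z\Q$ makes $2$ invertible, collapsing you back into the case already done rather than furnishing a degree function on $D$. Second, even granting a multiplicative degree $\deg$ on $D$, every element of $\Z\subseteq D$ has degree $0$, so for $d\in\Z$ the map $d\mapsto d(d\pm1)$ does not increase complexity at all --- your well-foundedness claim fails precisely on the part of $D$ where the problem is already the nontrivial classical one over $\Z$. Third, the splitting identity spawns \emph{two} new denominators, only one of which gains complexity; chasing the additive branch $d\mapsto d+1\mapsto d+2\mapsto\cdots$ through a long stretch of the forbidden set produces arbitrarily many multiplicative offspring $(d+j)(d+j+1)$, and there is no evident bound on the resulting recursion tree. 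Finally, those offspring can themselves coincide: $(d+j)(d+j+1)=(d+j')(d+j'+1)$ whenever $2d+j+j'+1=0$ in $D$, which you have not excluded.

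As written, then, this is a proof strategy with the decisive step --- termination of the rewriting when $\chr D=0$ and $2\notin D^\times$ --- left open.
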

Hence, unlike the study of Egyptian fractions in number theory, the question of distinctness of denominators is a non-issue in the study of reciprocal complements.

It was observed in \cite{GLO-Egypt} that not all Euclidean domains are Egyptian, even though $\Z$ is.  In particular, a polynomial ring $K[x]$ in one variable over a field is not (see \cite[Proposition 1]{GLO-Egypt}).  Accordingly, the first named author made a study of reciprocal complements of Euclidean domains. We need to recall here the definition of unit-additivity.

\begin{defn}[{\cite[Definition 1.1]{nmeSh-unitadd}}]
\label{def:unitadditive}
A commutative ring $R$ is \it unit-additive \rm if the sum of any two units is either a unit or nilpotent.
\end{defn}

The following result holds:
\begin{thm}[{\cite[Theorem 2.10 and Example 2.14]{nme-Euclidean}}]\label{thm:Euclidean}
Let $D$ be a Euclidean domain that is not Egyptian.  Then $D$ is unit-additive, and $R(D)$ is a discrete rank 1 valuation ring.
In the special case $D=K[x]$ for a field $K$, we have $$ R(D) = \left\{ \frac{f}{g} \,\middle\vert\,  f,g \in D, \, g \neq 0, \, \deg(f) \leq \deg(g) \right\} =K[x^{-1}]_{(x^{-1})}. $$ 
For arbitrary non-Egyptian Euclidean domains, one gets an analogous ring, replacing the degree by the appropriate Euclidean function.
\end{thm}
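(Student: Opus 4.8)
The plan is to establish the explicit description for $D=K[x]$ in full and then indicate the (largely formal) generalization. Write $t=x^{-1}$, so $\Frac D = K(x)=K(t)$, and set
$V:=\{\,f/g : f,g\in K[x],\ g\neq 0,\ f=0\text{ or }\deg f\le\deg g\,\}$.
The first step is the elementary identification $V=K[t]_{(t)}$: if $\deg g=n$ then $g=x^{n}\cdot(x^{-n}g)$ where $x^{-n}g\in K[t]$ has nonzero constant term (the leading coefficient of $g$), hence is a unit of $K[t]_{(t)}$, while $x^{-n}f\in K[t]$ as soon as $\deg f\le n$; conversely any $p(t)/q(t)$ with $q(0)\neq 0$ becomes, after multiplying numerator and denominator by a high power of $x$, a quotient of polynomials in $x$ with $\deg(\text{numerator})\le\deg(\text{denominator})$. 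In particular $V$ is a DVR with uniformizer $1/x$ and residue field $K$, so it suffices to prove $R(D)=V$.

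The inclusion $R(D)\subseteq V$ is immediate: each generator $1/d$ lies in $V$ by the computation just made ($1/d=t^{n}/(x^{-n}d)$ with $n=\deg d$), and $V$ is a ring. Equivalently, a common-denominator computation shows that over $\prod_i d_i$ the numerator of $\sum_{i=1}^{N}1/d_i$ is a sum of products each of degree $\le\deg(\prod_i d_i)$, hence of degree $\le\deg(\prod_i d_i)$.

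The substance is the reverse inclusion $V\subseteq R(D)$: every $f/g$ with $\deg f\le\deg g$ is a finite sum of reciprocals of elements of $D^\circ$. I would argue by strong induction on $n=\deg g$, shrinking the denominator in stages. Polynomial division gives $f=cg+r$ with $c\in K$ and $\deg r<n$ (or $r=0$), so $f/g=c+r/g$ with $c\in K\subseteq R(D)$, reducing to $\deg r<\deg g$. If $g=g_1g_2$ with $g_1,g_2$ coprime of positive degree, partial fractions give $r/g=A/g_1+B/g_2$ with $\deg A<\deg g_1$, $\deg B<\deg g_2$, and induction finishes; the prime-power case $g=p^{k}$ with $k\ge 2$ reduces to smaller powers by writing $r=qp^{k-1}+r'$ with $\deg q<\deg p$ and $\deg r'<(k-1)\deg p$, whence $r/p^{k}=q/p+r'/p^{k-1}$, the first term handled by the outer induction ($\deg p=n/k<n$) and the second by induction on $k$. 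This leaves the crucial case $g=p$ irreducible with $0\le\deg r<\deg p=n$ and $r\neq 0$: since $p$ is prime and $r\not\equiv 0\pmod p$, choose $s$ with $rs\equiv 1\pmod p$ and $\deg s<n$, write $rs=1+pt$, and observe $r/p=rs/(ps)=1/(ps)+t/s$, where $1/(ps)\in R(D)$ and, when $t\neq 0$, $\deg t=\deg r+\deg s-n\le\deg s-1<\deg s<n$, so $t/s\in R(D)$ by the outer induction. This closes the induction and gives $R(K[x])=K[x^{-1}]_{(x^{-1})}$; being a DVR it is not $\Frac D$, reconfirming that $K[x]$ is not Egyptian, and $K[x]$ is trivially unit-additive because its units are $K^\times$, whose pairwise sums lie in $K=K^\times\cup\{0\}$.

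For an arbitrary Euclidean domain I would run the same four-step induction with $\deg$ replaced by a suitably normalized Euclidean function $\delta$ (e.g.\ Motzkin's minimal one, so that $\delta(ab)\ge\delta(a)$ and $D^\times$ is exactly the fiber of $\delta$ over its minimum), obtaining $R(D)=\{\,f/g\in\Frac D : f=0\text{ or }\delta(f)\le\delta(g)\,\}$, a valuation ring of $\Frac D$ that is proper precisely when $D$ is not a field, and then a rank-$1$ discrete valuation ring since its value group is a nontrivial subgroup of $\Z$. Unit-additivity drops out: for units $a,b$ one has $\tfrac1a+\tfrac1b=\tfrac{a+b}{ab}\in R(D)$, forcing $\delta(a+b)\le\delta(ab)=\min\delta$, i.e.\ $a+b\in D^\times\cup\{0\}$. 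The main obstacle is exactly this transfer: one must check that the minimal Euclidean function of a non-Egyptian Euclidean domain behaves enough like $\deg$ — in particular is essentially additive on products and ultrametric on the sums occurring above — so that both the inductive construction and the inclusion $R(D)\subseteq\{f/g:\delta(f)\le\delta(g)\}$ survive; the latter genuinely fails for $\Z$ (which is Egyptian, with $1/1+1/1=2/1$), so the non-Egyptian hypothesis must be used here, morally through the fact that if any non-unit of $D$ were Egyptian then $D$ itself would be Egyptian. Isolating that dichotomy — equivalently, producing the discrete "degree-at-infinity" valuation whose valuation ring is $R(D)$ — is the delicate point.
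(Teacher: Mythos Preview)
The paper does not prove this theorem here; it is quoted from \cite{nme-Euclidean}. What the paper \emph{does} prove is the Proposition immediately following it, which generalizes the $K[x]$ computation to semigroup algebras $K[x^{S}]$ for $S$ the nonnegative part of any totally ordered abelian group, and the method there is quite different from yours. Rather than partial fractions and strong induction on $\deg g$, the paper observes that any nonzero $f\in D$ of degree $s$ factors as $f=x^{s}\cdot u$ with $u=\lambda_{s}+\sum_{j}\lambda_{j}/x^{\,s-s_{j}}$ a \emph{unit} of $R(D)$ (by the description of units in Corollary~\ref{cor:units}); doing the same for $g$ of degree $t\ge s$ yields $f/g=(1/x^{\,t-s})\cdot u\,v^{-1}\in R(D)$ in one stroke. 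This is shorter and extends verbatim to arbitrary totally ordered value groups, but it leans on the structural fact that $R(D)$ is local (Theorem~\ref{thm:islocal}). Your partial-fraction argument is longer but entirely self-contained, using only the Euclidean algorithm in $K[x]$; both are correct for $K[x]$.

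For the general Euclidean case you correctly isolate the obstacle---a Euclidean function need not be multiplicative on products or ultrametric on sums---and you honestly do not resolve it. Your sketch requires the set $\{f/g:\delta(f)\le\delta(g)\}$ to be closed under addition and the induction to terminate, neither of which follows from the Euclidean axioms alone (as your $\Z$ example shows). The dichotomy you gesture at---that a non-Egyptian Euclidean domain is forced to be unit-additive, after which the minimal Euclidean function acquires the needed degree-like behavior---is precisely the content of the cited paper, and is not reproduced in the present one. So your proposal is a complete proof for $K[x]$ by a route genuinely different from (and more elementary than) the one the paper records, but for arbitrary Euclidean $D$ it remains a plausible outline rather than a proof.
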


We thus come to the following definition, so named because of the repurposed algorithm of Fibonacci's in the proof of Theorem~\ref{thm:Euclidean}.

\begin{defn}[{\cite[Definition 2.1]{nme-Euclidean}}]
An integral domain $D$ is \emph{Bonaccian} if $R(D)$ is a field or a valuation domain -- that is, if for any $\alpha \in \Frac D$, either $\alpha \in R(D)$ or $\alpha^{-1} \in R(D)$.
\end{defn}

Of course, any Bonaccian domain yields a totally ordered group as the value group of its reciprocal complement.  As seen above, both the trivial group and $\Z$ may be obtained in this way.  In fact, as we record for the first time here, \emph{any} totally ordered Abelian group is the value group of the reciprocal complement of some Bonaccian integral domain.

\begin{prop}
Let $\Gamma$ be a totally ordered Abelian group, written additively.  Let $S = \Gamma_{\geq 0}$ be the submonoid of nonnegative elements.  Let $K$ be any field, and $D = K[x^S]$ the semigroup algebra on $S$.  Then $R(D)$ is a valuation ring with value group $\Gamma$.
\end{prop}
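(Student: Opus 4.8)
The plan is to identify $R(D)$ explicitly with a concrete valuation ring of $F=\Frac D$. Since every monomial $x^\gamma$, $\gamma\in\Gamma$, is a ratio of two monomials from $x^S$ (because $\Gamma=S-S$), we have $F=\Frac(K[x^\Gamma])$ for the group algebra $K[x^\Gamma]$. On $K[x^\Gamma]$ define $v$ by assigning $x^\gamma$ the value $-\gamma\in\Gamma$ and setting $v\bigl(\sum_\gamma c_\gamma x^\gamma\bigr)=\min\{-\gamma\mid c_\gamma\neq 0\}$. Because $K$ is a domain the extreme term of a product does not cancel, so $v(fg)=v(f)+v(g)$, while $v(f+g)\geq\min(v(f),v(g))$ is automatic; hence $v$ extends to a valuation on $F$, whose value group is all of $\Gamma$ since $v(x^\gamma)=-\gamma$. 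Let $\cO=\{a\in F\mid v(a)\geq 0\}$ be its valuation ring. The whole proposition follows once we prove $R(D)=\cO$. It is convenient to record the normal form obtained by clearing denominators with a monomial: every $a\in\cO$ can be written $a=p/q$ with $p,q\in K[x^{\Gamma_{\leq 0}}]$ and $q(0)\neq 0$ (equivalently $\cO=(K[x^{\Gamma_{\leq 0}}])_{\m}$, where $\m$ is the maximal ideal of elements with zero constant term).

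The inclusion $R(D)\subseteq\cO$ is immediate: a generator $1/d$ of $R(D)$ has $v(1/d)=-v(d)=\max\{\text{exponents of }d\}\geq 0$ since $d\in K[x^S]$ and $S=\Gamma_{\geq 0}$. For $\cO\subseteq R(D)$, I would first note that $K\subseteq R(D)$ (as $c=1/c^{-1}$) and $x^{-s}=1/x^s\in R(D)$ for $s\in S$, so $K[x^{\Gamma_{\leq 0}}]\subseteq R(D)$; by the normal form it then suffices to show $1/q\in R(D)$ whenever $q\in K[x^{\Gamma_{\leq 0}}]$ has $q(0)\neq 0$, and dividing by $q(0)$ this reduces to the single assertion that $1/(1+\e)\in R(D)$ for every $\e\in K[x^{\Gamma_{\leq 0}}]$ with $\e(0)=0$.

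I would prove that assertion by induction on the number $m$ of monomials of $\e$, the case $m=0$ being trivial. For $m\geq 1$ write $\e=\e_0+ax^{-s}$, where $-s$ is the \emph{least} exponent occurring in $\e$; then $a\in K^\times$, $s\in S$, $\e_0$ has $m-1$ monomials, and all exponents of $\e_0$ lie strictly between $-s$ and $0$. Put $u=1+\e_0$, so $u(0)=1$ and $1/u\in R(D)$ by the inductive hypothesis. The crucial point is that $x^s u=x^s+x^s\e_0\in D^\circ$: every exponent of $x^s u$ lies in $(0,s]\subseteq S$, and the $x^s$-term is not cancelled. Hence $ax^{-s}/u=a/(x^s u)=1/d$ with $d:=a^{-1}x^s u\in D^\circ$, and $d$ has no constant term, so $1+d\in D^\circ$ as well. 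Therefore
\[
  \frac{1}{1+\e}=\frac{1}{u+ax^{-s}}=\frac{1}{u}\cdot\frac{1}{1+1/d}=\frac{1}{u}\left(1-\frac{1}{1+d}\right)\in R(D),
\]
since $1/u$, $1/(1+d)$ and $1$ all lie in the ring $R(D)$. This completes the induction, whence $\cO\subseteq R(D)$, and therefore $R(D)=\cO$ is a valuation ring with value group $\Gamma$.

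The one genuinely delicate step is this induction. The tempting approach---clear all denominators in $1/(1+\e)$ and expand---does not terminate, because multiplying through by $x^s$ to kill the negative exponents creates a new monomial, so no obvious complexity measure decreases. The remedy is to strip off only the single term of most negative exponent: that is precisely what forces the leftover factor $x^s u$ into $D$, while the elementary identity $\tfrac{1}{1+1/d}=1-\tfrac{1}{1+d}$ with $1+d\in D^\circ$ disposes of the stripped term and the inductive hypothesis handles $1/u$. The remaining ingredients---that $v$ is a well-defined valuation (here one uses that $K$ is a domain), that $\Frac(K[x^S])=\Frac(K[x^\Gamma])$, and the monomial normal form for elements of $\cO$---are routine and I would dispatch them quickly.
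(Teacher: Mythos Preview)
Your proof is correct and follows the same overall strategy as the paper: define the candidate valuation ring $\cO=\{f/g\mid \deg f\leq\deg g\}$ (your $v$ is exactly the negative of the paper's degree function), check the easy inclusion $R(D)\subseteq\cO$, and then prove $\cO\subseteq R(D)$ by reducing to the statement that any element of the form $1+\varepsilon$, with $\varepsilon$ a $K$-linear combination of strictly negative-exponent monomials, is a unit of $R(D)$.

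The one genuine difference is how that last fact is established. The paper does not argue it directly: it writes $f=x^{\deg f}\cdot u$ with $u=\lambda+\sum_j\lambda_j x^{-(s-s_j)}$ and then invokes forward references (Proposition~\ref{pr:gradedarenotegyptian} and Corollary~\ref{cor:units}) to the general structure theory---$R(D)$ is local and, in the E-simple case, its units are exactly the elements $u+\sum 1/f_i$ with $u\in K^\times$. Your induction on the number of monomials of $\varepsilon$, peeling off the term of most negative exponent and using $\tfrac{1}{1+1/d}=1-\tfrac{1}{1+d}$, gives a direct, self-contained proof of the same fact, at the cost of a slightly longer computation. Each has its merits: the paper's route is cleaner once the local structure is in hand, while yours avoids forward references and makes the proposition logically independent of Theorem~\ref{thm:islocal}.
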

This generalizes \cite[Theorem 4.2]{Gu-recomp}, which established the special case where $\Gamma$ was of finite rank.

\begin{proof}
Any nonzero $f\in D$ is a finite sum of terms of the form $\lambda x^s$, $0\neq \lambda \in K$, $s\in S$.  We define the \emph{degree} $\deg f$ of $f$ to be the largest such $s$. By convention, set $\deg(0) = -\infty$.  Note that for any $f,g \in D$, we have $\deg(fg) = \deg(f)+\deg(g)$, and if $f+g\neq 0$, $\deg(f+g) \leq \max\{\deg(f), \deg(g)\}$.  Set $R := \{\frac fg \mid f,g \in D,\ g \neq 0,\ \deg(f) \leq \deg(g)\}.$ We will show that $R=R(D)$.

First, let us show that $R$ is a ring.  Let $\alpha=\frac fg,\ \beta = \frac{f'}{g'} \in R$, so that $\deg(f) \leq \deg(g)$ and $\deg(f') \leq \deg(g')$.  Then $\deg(ff') = \deg(f) + \deg(f') \leq \deg(g) + \deg(g') = \deg(gg')$, so that $\alpha \beta \in R$.  We have $\alpha + \beta = \frac{fg'+f'g}{gg'}$, and $\deg(fg'+f'g) \leq \max \{ \deg(fg'), \deg(f'g)\} = \max\{\deg(f) + \deg(g'), \deg(f') +\deg(g)\} \leq \deg(g) + \deg(g') = \deg(gg')$, whence $\alpha + \beta \in R$.  Finally, $1=\frac11$, so since $\deg(1) = \deg(1)$, we have $1\in R$.  Thus, $R$ is a ring.

Next, we show that $R(D) \subseteq R$.  Since $R$ is a ring, it is enough to show that any generator $\frac 1d$ of $R(D)$ is in $R$.  But for any $d\in D^\circ$, $\deg(d) \geq 0 = \deg(1)$, so $\frac 1d\in R$.

To see that $R \subseteq R(D)$ (and hence $R=R(D)$), let $\alpha \in R$.  Then $\alpha = \frac fg$, where $s := \deg(f) \leq \deg(g)=:t$.  We have $f= \lambda_s x^s + \sum_{j=1}^n \lambda_j x^{s_j}$, where each $s_j < s$.  But then $f = x^s \left(\lambda_s + \sum_{j=1}^n \frac{\lambda_j}{x^{s-s_j}}\right) = x^s u$, where $u$ is a unit of $R(D)$  (for this fact see Proposition~\ref{pr:gradedarenotegyptian} and  Corollary \ref{cor:units} below). 
Similarly, we can express $g$ as $g=x^t v$ for some unit $v$ of $R(D)$.  Then $\alpha = \frac fg = \frac 1{x^{t-s}} \cdot u\cdot v^{-1}$.  This is then a product of three elements of $R(D)$, whence $\alpha \in R(D)$.

Finally, let us show that $R$ (and hence $R(D)$) is a valuation ring with value group $\Gamma$.  For any nonzero element $\alpha=f/g$ of $R$, set $\nu(\alpha) := \deg g - \deg f$.  Then we always have $\nu(\alpha)\geq 0$, by definition.  Also, for $\alpha,\beta \in R$, one sees easily from the computations that showed $R$ to be a ring that $\nu(\alpha\beta) = \nu(\alpha) + \nu(\beta)$ and $\nu(\alpha+\beta) \geq \min\{\nu(\alpha), \nu(\beta)\}$.  Finally, to see that the value group is $\Gamma$, it suffices to show that $\nu$ is surjective onto $S$.  But for any $s\in S$, we have $\frac 1{x^s} \in R$, and $\nu(1/x^s)=s$.
\end{proof}

Next, consider the following definition: \begin{defn}
An integral domain $D$ is \emph{Egyptian-simple} (or \emph{E-simple}) if any Egyptian element of $D$ is a unit.
\end{defn}

One makes frequent use of the following observations: 

\begin{prop}[{\cite[Proposition 2.2]{nmeGuLo-poly} and \cite[Proposition 8(3)]{GLO-Egypt}}]
Let $D$ be an integral domain, $F$ its fraction field, $E$ the set of its Egyptian elements, and $G=E \cup \{0\}$. Then $G$ is a ring, $R(D)=R(E^{-1}D)$, and the fraction field of $G$ consists of the Egyptian elements of $E^{-1}D$, along with $0$.  In particular, $E^{-1}D$ is E-simple.
\end{prop}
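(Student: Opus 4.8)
The plan is to realize $G$ as the intersection $D\cap R(D)$ inside $F$ and then to push everything through the (very soft) universal properties of localization and of the reciprocal complement.

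\textbf{Step 1 (the ring $G$ and the localization $E^{-1}D$).} By the definition of Egyptian element, $G=E\cup\{0\}=D\cap R(D)$ is the intersection of two subrings of $F$, hence is itself a subring of $F$; in particular it is an integral domain, $1\in G$ (so $1\in E$), and $E=G\setminus\{0\}$ is a multiplicatively closed subset of $D$. Therefore $E^{-1}D$ is a well-defined subring of $F$ with $\Frac(E^{-1}D)=F$, lying between $D$ and $F$.

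\textbf{Step 2 ($R(D)=R(E^{-1}D)$).} For ``$\subseteq$'': every generator $1/d$ of $R(D)$ (with $d\in D^\circ$) is also a generator of $R(E^{-1}D)$, since $d\in(E^{-1}D)^\circ$; as $R(E^{-1}D)$ is a ring it contains every finite sum of such elements, so $R(D)\subseteq R(E^{-1}D)$. For ``$\supseteq$'': since $R(D)$ is closed under finite sums it suffices to show that every generator $1/\alpha$ of $R(E^{-1}D)$ lies in $R(D)$. Writing $\alpha=d/e$ with $d\in D^\circ$ and $e\in E$, we get $1/\alpha=e\cdot(1/d)$, a product of the Egyptian element $e\in R(D)$ with the generator $1/d\in R(D)$, hence in $R(D)$.

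\textbf{Step 3 ($\Frac G$ and E-simplicity).} By Step 2 the Egyptian elements of $E^{-1}D$ are exactly $E^{-1}D\cap R(E^{-1}D)=E^{-1}D\cap R(D)$, so I must show that this set, together with $0$, equals $\Frac G$. A nonzero element of $\Frac G$ has the form $a/b$ with $a,b\in E$; it lies in $E^{-1}D$ (as $a\in D$, $b\in E$) and equals $a\cdot(1/b)\in R(D)$, giving one inclusion. Conversely, if $0\neq x\in E^{-1}D\cap R(D)$, write $x=d/e$ with $d\in D^\circ$ and $e\in E$; then $d=xe\in R(D)$ while $d\in D$, so $d\in D\cap R(D)=E$, whence $x=d/e\in\Frac G$. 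Finally, the ``in particular'' is immediate: the Egyptian elements of $E^{-1}D$ together with $0$ form the field $\Frac G$, which is contained in $E^{-1}D$; thus any nonzero Egyptian element of $E^{-1}D$ has its inverse already in $\Frac G\subseteq E^{-1}D$, so it is a unit of $E^{-1}D$, i.e.\ $E^{-1}D$ is E-simple.

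I do not expect a genuine obstacle; the proof is a chain of routine verifications. The step needing the most care is the reverse inclusion $E^{-1}D\cap R(D)\subseteq\Frac G$ in Step 3, where the key point is that multiplying an element of $R(D)$ by an Egyptian element and landing inside $D$ forces the product to be Egyptian — this is exactly what pulls $x$ back into $\Frac G$. One should also keep track of the convention on whether $0$ is counted as Egyptian, but since $0\in D\cap R(D)$ in any case, this has no bearing on the argument.
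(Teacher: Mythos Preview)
Your proof is correct. The paper does not give its own proof of this proposition; it simply cites \cite[Proposition 2.2]{nmeGuLo-poly} and \cite[Proposition 8(3)]{GLO-Egypt}, so there is nothing to compare against beyond noting that your identification $G=D\cap R(D)$ and the resulting three-step verification are exactly the straightforward argument one would expect.
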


Thus, to study the reciprocal complement of $D$, one may assume $D$ is E-simple by first inverting its Egyptian elements. In particular, this approach led to the following converse to Theorem~\ref{thm:Euclidean}.

\begin{thm}[{\cite[Theorem 6.1]{Gu-recomp}}]\label{thm:Euclconverse}
Let $D$ be an integral domain and $E$ the set of the Egyptian elements of $D$.  If $D$ is not Egyptian, then the following are equivalent: \begin{enumerate}
    \item $R(D)$ is a DVR.
    \item $E^{-1}D$ is a Euclidean domain.
    \item $E^{-1}D \cong k[x]$ for some field $k$.
\end{enumerate}
\end{thm}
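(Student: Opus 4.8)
The implications $(3)\Rightarrow(2)$ and $(2)\Rightarrow(1)$ are the easy ones. The first holds because $k[x]$ is Euclidean. For the second, note that $E^{-1}D$ is not Egyptian: its fraction field is $\Frac D$, whereas $R(E^{-1}D)=R(D)$ is a proper subring of $\Frac D$ since $D$ is not Egyptian; so $E^{-1}D$ is a non-Egyptian Euclidean domain and Theorem~\ref{thm:Euclidean} shows $R(D)=R(E^{-1}D)$ is a DVR. The real content is $(1)\Rightarrow(3)$, and that is what I would concentrate on.

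So assume $R(D)$ is a DVR. Since $R(E^{-1}D)=R(D)$ and $E^{-1}D$ is E-simple and, as just noted, not Egyptian, I would replace $D$ by $E^{-1}D$ and assume henceforth that $D$ is E-simple. Write $V:=R(D)$, a DVR with fraction field $\Frac D$; let $\nu\colon(\Frac D)^\times\onto\Z$ be its normalized valuation and $k_V$ its residue field. The first task is the dictionary: for $d\in D^\circ$, being Egyptian means lying in $V$, which means $\nu(d)\ge 0$; by E-simplicity the Egyptian elements of $D$ are exactly $D^\times\cup\{0\}$, and any $u\in D^\times$ has $\nu(u)=0$ because $u,u^{-1}\in V$. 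Hence $\nu(d)\le 0$ for every $d\in D^\circ$, with equality iff $d\in D^\times$. From this I would deduce that $k:=D^\times\cup\{0\}$ is a subfield of $D$: it is closed under products and inverses by inspection, and if $u,v\in D^\times$ with $u+v\ne 0$ then $0\ge\nu(u+v)\ge\min\{\nu(u),\nu(v)\}=0$, so $u+v$ is Egyptian of valuation $0$, hence a unit.

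Next I would locate a uniformizer inside $D$. By the fact recalled in the introduction, $\m_V$ is generated by $\{1/f : f\in D\text{ not Egyptian}\}$, a nonempty set (as $D$ is not Egyptian) lying in $\m_V$; since in a DVR an ideal generated by elements of positive valuation is $\pi^m V$ with $m$ the minimum of those valuations, and $\m_V$ has an element of valuation $1$, some $1/f$ must have $\nu(1/f)=1$. Fix such an $f$, put $t:=f\in D$, so $\nu(t)=-1$. I would then check that $k$ is a \emph{coefficient field} of $V$: the inclusion $k\hookrightarrow k_V$ is injective because nonzero elements of $k$ have valuation $0$, and it is surjective because $V$ is generated as a ring by the elements $1/d$ ($d\in D^\circ$), each of which maps to $0$ in $k_V$ when $\nu(d)<0$ and to the image of $1/d\in D^\times\subseteq k$ when $\nu(d)=0$, so $k_V$ is generated by, hence equal to, the image of $k$. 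Also $t$ is transcendental over $k$: otherwise $k[t]$ would be a finite field extension of $k$ contained in $D$, forcing $t^{-1}\in D$ and $t\in D^\times=k\setminus\{0\}$, contradicting $\nu(t)=-1$.

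Finally I would prove $D=k[t]$, which yields $D\cong k[x]$ and hence $(3)$. The inclusion $k[t]\subseteq D$ is clear. For the reverse, given $d\in D^\circ$ I induct on $n:=-\nu(d)\ge 0$: if $n=0$ then $d\in D^\times\subseteq k$; if $n\ge 1$ then $d/t^n\in V^\times$ (it has valuation $0$), so since $k$ is a coefficient field there is $c\in k^\times$ with $d/t^n-c\in\m_V$, whence $d-ct^n\in D$ and $\nu(d-ct^n)=\nu(t^n)+\nu(d/t^n-c)\ge -n+1$; thus either $d-ct^n=0$ or $-\nu(d-ct^n)\le n-1$, and the inductive hypothesis gives $d-ct^n\in k[t]$, hence $d\in k[t]$. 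The step I expect to be the crux is the identification of the intrinsic field $k=D^\times\cup\{0\}$ with the residue field of $V$: this coefficient-field property is exactly what makes the final valuation-theoretic truncation go through, and everything else is careful bookkeeping with $\nu$ together with the two external inputs (primes of $R(D)$ generated by reciprocals, and Theorem~\ref{thm:Euclidean}).
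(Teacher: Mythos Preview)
The paper does not supply its own proof of this theorem; it is quoted verbatim from \cite[Theorem~6.1]{Gu-recomp} and left unproved here, so there is no in-paper argument to compare against. That said, your proof is correct and self-contained relative to the tools available in this paper: the easy implications are handled via Theorem~\ref{thm:Euclidean} and the identity $R(E^{-1}D)=R(D)$, and for $(1)\Rightarrow(3)$ you correctly reduce to the E-simple case, read off from the DVR valuation $\nu$ that $\nu|_{D^\circ}\le 0$ with equality exactly on $D^\times$, exhibit $k=D^\times\cup\{0\}$ as a coefficient field of $V$ (the surjectivity step using that every element of $V$ is a finite sum of reciprocals is the key observation), produce $t\in D$ with $\nu(t)=-1$ from Theorem~\ref{thm:islocal}, and finish with a clean $\nu$-truncation induction to get $D=k[t]$. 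Each step checks out; in particular the coefficient-field claim, which you flag as the crux, is justified exactly as you say.
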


Equivalently, if $D$ is an E-simple domain that is not a field, then: \[
R(D) \text{ is a DVR} \iff  D \text{ is Euclidean} \iff D \cong k[x] \text{ for some field } k.
\]

A rich source of E-simple domains can be found in positively graded rings.  In fact, the proof of \cite[Proposition 2.3]{nme-Edom} can be strengthened to the following statement.

\begin{prop}\label{pr:gradedarenotegyptian}
Let $\Gamma$ be a totally ordered abelian group and $\Gamma_{\geq 0}$ the monoid of nonnegative elements.  Let $D = \bigoplus_{\lambda \in \Gamma_{\geq 0}} D_\lambda$ be a $\Gamma_{\geq 0}$-graded domain.  Then for any  element $x \in D$ of positive degree, $x$ is not Egyptian.

In particular, if $D_0$ is a field, then $D$ is E-simple.
\end{prop}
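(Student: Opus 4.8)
The plan is to work with the top-degree function attached to the grading. Since $\Gamma$ is totally ordered, every nonzero $f \in D$ has a well-defined \emph{degree} $\deg f$, namely the largest $\lambda \in \Gamma_{\geq 0}$ for which the degree-$\lambda$ homogeneous component of $f$ is nonzero; put $\deg 0 := -\infty$. Because $D$ is a domain, the product of the top homogeneous components of $f$ and $g$ is again nonzero and lies in the expected degree, so $\deg(fg) = \deg f + \deg g$ for all $f,g$, while $\deg(f+g) \leq \max\{\deg f, \deg g\}$ always. Note that $\deg f \geq 0$ for every $f \in D^\circ$, since the grading monoid is $\Gamma_{\geq 0}$, and $\deg f = 0$ exactly when $f \in D_0^\circ$.

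I would then prove the contrapositive of the first assertion in the following quantitative form: if $x \in D$ satisfies $\deg x > 0$ (in particular, if $x$ is homogeneous of positive degree), then $x \notin R(D)$. Suppose otherwise; then $x$ can be written as $\sum_{i=1}^n \frac{1}{d_i}$ with each $d_i \in D^\circ$, and $n \geq 1$ because $x \neq 0$. Multiplying by $d := d_1 \cdots d_n$ gives the genuine identity $x\,d = \sum_{i=1}^n \prod_{j \neq i} d_j$ in $D$. The left side is nonzero (a product of nonzero elements of a domain) and has degree $\deg x + \sum_{i=1}^n \deg d_i$, which is strictly greater than $\sum_{i=1}^n \deg d_i$ since $\deg x > 0$. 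The right side, a sum of $n$ terms, has degree at most $\max_i \sum_{j \neq i} \deg d_j \leq \sum_{i=1}^n \deg d_i$, the last step using $\deg d_i \geq 0$. This contradicts the identity, so $x$ is not Egyptian.

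For the ``in particular'' clause, assume $D_0$ is a field. Then $D^\times = D_0^\circ$: any nonzero element of $D_0$ is invertible already in $D_0$, and conversely if $uv = 1$ in $D$ then $\deg u + \deg v = \deg 1 = 0$ with both summands $\geq 0$, forcing $\deg u = \deg v = 0$, i.e.\ $u \in D_0$. Now if $y \in D$ is a nonzero Egyptian element, the previous paragraph rules out $\deg y > 0$, so $\deg y = 0$, i.e.\ $y \in D_0^\circ = D^\times$. Hence every nonzero Egyptian element of $D$ is a unit, which is precisely E-simplicity.

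I do not anticipate a serious obstacle: the whole argument is a degree count, and its one real idea is that clearing denominators turns ``$x$ is Egyptian'' into an equality in $D$ whose left-hand side has strictly larger top degree than its right-hand side, because replacing elements of $D$ (all of nonnegative degree) by their reciprocals can only lower degrees. The points needing a touch of care are the multiplicativity $\deg(fg) = \deg f + \deg g$ (this is exactly where both the domain hypothesis and the total order on $\Gamma$ enter) and the tacit restriction of E-simplicity to nonzero Egyptian elements, which is unavoidable since $0$ is the empty sum of reciprocals and hence Egyptian in every domain. One could alternatively package the estimate as in the proof of the preceding proposition, via the valuation $\nu(f/g) := \deg g - \deg f$ on $\Frac D$: it is $\geq 0$ on every generator $1/d$ of $R(D)$, hence $\geq 0$ on all of $R(D)$, whereas $\nu(x) = -\deg x < 0$ when $\deg x > 0$.
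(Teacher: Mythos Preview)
Your argument is correct. The paper does not supply its own proof of this proposition; it merely remarks that the proof of \cite[Proposition 2.3]{nme-Edom} can be strengthened to yield the stated result. Your degree-counting argument---clearing denominators and comparing the top degree of $xd$ against that of $\sum_i \prod_{j\neq i} d_j$---is exactly the natural approach, and your alternative packaging via the valuation $\nu(f/g) = \deg g - \deg f$ is precisely the device the paper employs in the proof of the preceding proposition (where the ring $W = \{f/g : \deg f \leq \deg g\}$ is shown to contain $R(D)$). So your proof is both correct and in the same spirit as the surrounding material; there is nothing to add.
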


\subsection{Generalities on reciprocal complements}
In this subsection and the next, we will survey what has been written about reciprocal complements in general, before we pass to the special case of polynomial rings.

Let us commence with the following structural information about prime ideals, units, and maximal ideals of $R(D)$.  First we note that any prime ideal is generated by reciprocals:
\begin{prop}[{\cite[Proposition 2.8]{nmeGuLo-poly}}]
Any nonzero  prime ideal of $R(D)$ is generated by elements of the form $\frac 1f$, $f\in D^\circ$.
\end{prop}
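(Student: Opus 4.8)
The plan is to prove that $P$ equals the ideal $J$ of $R(D)$ generated by the set $\Sigma := \{\frac 1f \mid f \in D^\circ,\ \frac 1f \in P\}$ of those reciprocals that already lie in $P$. One inclusion is automatic, since $\Sigma \subseteq P$ forces $J \subseteq P$. So the whole content is the reverse inclusion $P \subseteq J$; as $0 \in J$, it is enough to show $\alpha \in J$ for every nonzero $\alpha \in P$.

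Fix such an $\alpha$ and write it as a finite sum of reciprocals $\alpha = \sum_{i=1}^n \frac 1{d_i}$ with $d_i \in D^\circ$ and $n \geq 1$. I would argue by induction on $n$. If $n = 1$ then $\alpha = \frac 1{d_1} \in P$, so $\alpha \in \Sigma \subseteq J$ and we are done.

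For $n \geq 2$, the crucial step -- and the only place where primeness of $P$ enters -- is to extract a single $\frac 1{d_i}$ lying in $P$. Clearing denominators, $\alpha = c/e$ with $e := d_1\cdots d_n \in D^\circ$ and $c := \sum_{i=1}^n \prod_{j\neq i} d_j \in D$; since $\alpha \neq 0$ we have $c \neq 0$, so $\frac 1c \in R(D)$. Then $\frac 1c \cdot \alpha = \frac 1e \in R(D)$, and because $\alpha$ lies in the ideal $P$ we conclude $\frac 1e = \frac 1{d_1\cdots d_n} = \frac 1{d_1}\cdots\frac 1{d_n} \in P$. As $P$ is prime, some factor $\frac 1{d_k}$ belongs to $P$. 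Now put $\beta := \alpha - \frac 1{d_k} = \sum_{i \neq k}\frac 1{d_i} \in P$, a sum of $n-1$ reciprocals. If $\beta = 0$ then $\alpha = \frac 1{d_k} \in \Sigma \subseteq J$; otherwise the induction hypothesis applies to $\beta$ and gives $\beta \in J$, whence $\alpha = \beta + \frac 1{d_k} \in J$ as well. This completes the induction.

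The substantive idea is entirely in the inductive step: recognizing that clearing denominators and multiplying by $1/c$ places the product $\frac 1{d_1\cdots d_n}$ inside $P$, after which primeness forces one of the $\frac 1{d_i}$ into $P$ and the argument collapses to ``subtract and recurse.'' The only thing to watch is that $\alpha \neq 0$ is genuinely needed, to guarantee the numerator $c$ is nonzero so that $1/c$ makes sense in $R(D)$; this is precisely why the statement restricts to nonzero prime ideals.
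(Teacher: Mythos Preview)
Your proof is correct and follows essentially the same approach as the paper's: multiply by the reciprocal of the numerator to land $\frac{1}{d_1\cdots d_n}$ in $P$, invoke primeness to extract some $\frac{1}{d_k}\in P$, then subtract and induct---this is exactly the mechanism behind the paper's slightly sharper Proposition~\ref{pr:decomposeelement}. One small expository quibble: your closing remark that the need for $\alpha\neq 0$ ``is precisely why the statement restricts to nonzero prime ideals'' is off, since the zero ideal is vacuously generated by (the empty set of) reciprocals; the hypothesis $\alpha\neq 0$ is needed only so that $1/c$ makes sense, not to exclude the zero prime.
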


More specifically, we have the following: \begin{prop}[{\cite[Proposition 2.7(3)]{Gu-recomp}}]\label{pr:decomposeelement} 
Let $\p\in \Spec R(D)$, and let $\alpha = \sum_{i=1}^n \frac 1{f_i} \in \p$, $0\neq f_i \in D$ such that $\alpha$ cannot be written as a sum of fewer than $n$ elements of the form $\frac 1f$, $f \in D$.  Then $\frac 1{f_i} \in \p$ for all $1\leq i \leq n$.
\end{prop}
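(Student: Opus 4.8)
The plan is to prove the statement by induction on $n$, the crucial device being an identity that converts a minimal-length sum of reciprocals into a \emph{product} of reciprocals. Given $\alpha=\sum_{i=1}^n\frac1{f_i}\in\p$ that cannot be rewritten with fewer summands, I would set $h:=f_1\cdots f_n\in D^\circ$ and $N:=\sum_{i=1}^n\prod_{j\ne i}f_j\in D$, so that $\alpha=N/h$. The minimality hypothesis forces $\alpha\ne0$ (otherwise $\alpha$ is the empty sum, of length $0<n$), hence $N\ne0$, so $\frac1N\in R(D)$ and
\[
\frac1N\cdot\alpha=\frac1h=\prod_{i=1}^n\frac1{f_i}.
\]
Since $\alpha\in\p$ and $\p$ is an ideal, the left-hand side lies in $\p$; since $\p$ is prime, $\frac1{f_{i_0}}\in\p$ for at least one index $i_0$.

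With that in hand I would first dispose of the base cases $n\le1$, which are immediate, and then carry out the inductive step. Relabelling so that $\frac1{f_1}\in\p$, put $\beta:=\alpha-\frac1{f_1}=\sum_{i=2}^n\frac1{f_i}$. Then $\beta\in\p$, being a difference of two elements of $\p$, and the displayed expression exhibits $\beta$ as a sum of $n-1$ reciprocals that cannot be shortened: any shorter expression for $\beta$ would, after adding back $\frac1{f_1}$, produce an expression for $\alpha$ with fewer than $n$ summands, contradicting the hypothesis. The induction hypothesis applied to $\beta$ then gives $\frac1{f_i}\in\p$ for all $2\le i\le n$, which together with $\frac1{f_1}\in\p$ completes the induction.

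I expect the only delicate points to be two small bookkeeping checks, both of which genuinely invoke the minimality hypothesis: that $N\ne0$ (so that $\frac1N$ really is an element of $R(D)$, not merely of $\Frac D$), and that minimal length is inherited by $\beta$ (which is exactly what makes the induction close). Neither is hard, but the hypothesis cannot be dropped: for instance $\frac1f+\frac1{-f}=0$ lies in every prime ideal of $R(D)$, while $\frac1f$ need not lie in $\p$ (e.g.\ when $f$ is a unit of $R(D)$). No structural information about $R(D)$ beyond its being a domain equipped with a prime ideal $\p$ is needed — in particular, the locality of $R(D)$ plays no role.
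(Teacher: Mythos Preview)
Your argument is correct. The present paper, being a survey, quotes this proposition from \cite[Proposition 2.7(3)]{Gu-recomp} without reproducing a proof, so there is no in-paper proof to compare your attempt against. Your induction on $n$ via the identity $\frac1N\cdot\alpha=\prod_{i=1}^n\frac1{f_i}$---with the key observation that minimality forces $N\ne0$, so that $\frac1N$ is a genuine element of $R(D)$---is a clean and complete argument; the two bookkeeping points you flag (nonvanishing of $N$, and inheritance of minimality by $\beta$) are indeed the only places the hypothesis is used, and you handle both correctly.
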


\begin{thm}[{\cite[Theorem 2.4]{nmeGuLo-poly}}]\label{thm:islocal}
$R(D)$ is local; its unique maximal ideal is generated by all elements of the form $\frac 1f$ where $f\in D$ is not Egyptian.
\end{thm}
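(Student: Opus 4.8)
The strategy is to identify the set of non-units of $R(D)$. Write $\mathfrak m$ for the ideal of $R(D)$ generated by $\bigl\{\tfrac1f : f\in D^\circ\text{ not Egyptian}\bigr\}$; the aim is to show $\mathfrak m$ is exactly that set of non-units, which forces $R(D)$ to be local with $\mathfrak m$ its unique maximal ideal. First I would record three elementary remarks. (i) For $f\in D^\circ$ the reciprocal $\tfrac1f$ always lies in $R(D)$, so $\tfrac1f\in R(D)^\times$ if and only if $f=(\tfrac1f)^{-1}\in R(D)$, that is, if and only if $f$ is Egyptian; in particular every $v\in D^\times$ is Egyptian. (ii) If $f$ is not Egyptian and $h\in D^\circ$, then $fh$ is not Egyptian either, for otherwise $f=(fh)\cdot\tfrac1h\in R(D)$; hence $\mathfrak m$ is precisely the set of finite sums $\sum_k\tfrac1{c_k}$ in which every $c_k$ is non-Egyptian, together with $0$. (iii) Every nonzero Egyptian element is a unit of $R(D)$, and $\tfrac gh\in R(D)^\times$ whenever $g,h$ are nonzero and Egyptian, using that the Egyptian elements together with $0$ form a subring $G$ of $D$.

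The proof now has an easy half and a hard half. Easy half: every maximal ideal of $R(D)$ is contained in $\mathfrak m$ (if it is the zero ideal this is trivial). Such an ideal is prime, hence, being nonzero, generated by reciprocals $\tfrac1f$ by the proposition above that every nonzero prime of $R(D)$ is so generated; since these generators are non-units of a proper ideal, each $f$ is non-Egyptian by (i), so the whole ideal lies in $\mathfrak m$ by (ii). Hard half: $\mathfrak m\ne R(D)$. Granting this we finish: being proper, $\mathfrak m\subseteq\mathfrak M$ for some maximal ideal $\mathfrak M$; combined with $\mathfrak M\subseteq\mathfrak m$ from the easy half this gives $\mathfrak m=\mathfrak M$, and every maximal ideal, lying inside $\mathfrak m=\mathfrak M$, equals $\mathfrak M$. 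Thus $R(D)$ is local with maximal ideal $\mathfrak m$, so the non-units are exactly the elements of $\mathfrak m$. (One can also see this concretely: any $\alpha=\sum_i\tfrac1{d_i}\notin\mathfrak m$ splits as $\alpha=u+m$ with $m\in\mathfrak m$ and $u$ the partial sum over the Egyptian $d_i$; clearing denominators in $u$ and using (iii) gives $u\in R(D)^\times$, so $\alpha=u(1+u^{-1}m)$ is a unit, as $1+u^{-1}m\notin\mathfrak m$.)

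The real content is the hard half: $1$ is not a finite sum of reciprocals of non-Egyptian elements of $D$. (Here one may first replace $D$ by $E^{-1}D$, which has the same reciprocal complement, reducing to the case that $D$ is E-simple---so that ``non-Egyptian'' simply means ``non-unit''.) Short sums already show where the non-Egyptian hypothesis enters: $1=\tfrac1{c_1}$ forces $c_1=1$, which is Egyptian, and $1=\tfrac1{c_1}+\tfrac1{c_2}$ forces $(c_1-1)(c_2-1)=1$, so $c_1-1\in D^\times$ and hence $c_1=1+(c_1-1)\in G$ is Egyptian (a unit is Egyptian and $G$ is a ring)---each a contradiction. For a general relation $1=\sum_{k=1}^N\tfrac1{c_k}$ one has $c_1=\bigl(1-\sum_{k=2}^N\tfrac1{c_k}\bigr)^{-1}$ in $F$, so it would suffice to know that $1-m'$ is a unit of $R(D)$ whenever $m'$ is a sum of fewer than $N$ reciprocals of non-Egyptian elements (that would put $c_1\in R(D)$, against its non-Egyptianness). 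I anticipate establishing this by induction on $N$, peeling off reciprocals one at a time while (ii) keeps denominators non-Egyptian, together with the remark that $1+v$ is Egyptian for every $v\in D^\times$. The delicate point---and where the argument genuinely needs care---is that inverting $1-m'$ and then multiplying by a reciprocal can raise the number of summands, so the induction cannot be run on that crude count and must be organized around a more stable invariant (or the inverses produced in suitably controlled form).
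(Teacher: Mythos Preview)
Your framework is right and you have located the crux: everything reduces to the ``hard half,'' namely that $1\notin\mathfrak m$, or equivalently that $u+\sum_{i=1}^n 1/f_i$ is invertible in $R(D)$ whenever $u$ is a nonzero Egyptian element and each $f_i$ is non-Egyptian. But you do not prove this; you only verify the cases $n\le 2$ and then flag the difficulty. The obstacle you identify---that a naive induction on $n$ fails because inverting and re-expanding can increase the number of summands---is exactly the issue, and the paper resolves it by an explicit recursive algorithm (Lemma~2.3 of \cite{nmeGuLo-poly}, described in the paragraph after Corollary~\ref{cor:units}): the inverse of $u+\sum_{i\in[n]}1/f_i$ is assembled from the already-computed inverses of \emph{all} $2^n-1$ proper sub-sums $u+\sum_{i\in J}1/f_i$, $J\subsetneq[n]$, and the output again has the shape $u^{-1}+\sum_j 1/g_j$ with each $g_j$ non-Egyptian. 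This is precisely the ``inverses produced in suitably controlled form'' you were reaching for; without it the argument is incomplete.

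Two smaller points. Your ``easy half'' invokes the proposition that nonzero primes are generated by reciprocals; in the original source that is Proposition~2.8 of \cite{nmeGuLo-poly}, numbered \emph{after} the present theorem, so you should verify it does not itself rely on locality before citing it here. And your parenthetical alternative---``$\alpha=u(1+u^{-1}m)$ is a unit, as $1+u^{-1}m\notin\mathfrak m$''---assumes that elements outside $\mathfrak m$ are units, which is the conclusion you are trying to establish. Both concerns dissolve once Lemma~2.3 is in hand: it shows directly that every element of $K^\times+\mathfrak m$ is a unit (in the E-simple case), and since visibly $R(D)=K+\mathfrak m$, this identifies $\mathfrak m$ with the set of non-units without any appeal to the prime spectrum.
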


\begin{cor}
\label{cor:units}
    Let $D$ be an E-simple domain.  Let $K$ be the subfield of $D$ consisting of  $0$ and the Egyptian elements of $D$. Then the units of $R(D)$ are precisely those elements of the form $
    u + \sum_{i=1}^n 1/f_i$,
    where $u\in K^\times$ and each $f_i \in D \setminus K$.
\end{cor}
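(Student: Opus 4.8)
The plan is to deduce everything from Theorem~\ref{thm:islocal} together with the single observation that $K^\times\subseteq R(D)^\times$. Write $\m$ for the maximal ideal of $R(D)$. By Theorem~\ref{thm:islocal}, $\m$ is generated by the reciprocals $1/f$ with $f\in D$ not Egyptian, i.e.\ with $f\in D\setminus K$; hence any finite sum $\sum_{i=1}^{n}1/f_i$ with all $f_i\in D\setminus K$ already lies in $\m$. And since $K$ is a field contained in $R(D)$, for $u\in K^\times$ both $u$ and $u^{-1}$ lie in $R(D)$, so $u\in R(D)^\times$; this is the only place the E-simplicity hypothesis enters (it is what makes $K$ a field).

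First I would check that every element of the stated form is a unit. Given $u\in K^\times$ and $f_1,\dots,f_n\in D\setminus K$, set $x:=\sum_{i=1}^{n}1/f_i\in\m$ and write $u+x=u\,(1+u^{-1}x)$. Here $u\in R(D)^\times$, while $u^{-1}x\in\m$, so $1+u^{-1}x\notin\m$; as $R(D)$ is local this gives $1+u^{-1}x\in R(D)^\times$, and therefore $u+x$ is a unit.

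Conversely, I would take an arbitrary $\alpha\in R(D)^\times$ and use that every element of $R(D)$ is a finite sum $\alpha=\sum_{j=1}^{m}1/d_j$ with $d_j\in D^\circ$. Partition the indices according to whether $d_j\in K$ or $d_j\in D\setminus K$. Each term with $d_j\in K$ has $d_j\in K^\times$, hence $1/d_j\in K$, so those terms sum to some $u\in K$; by the first paragraph the remaining terms sum to some $x\in\m$. Thus $\alpha=u+x$ with $u\in K$, $x\in\m$. Were $u=0$ we would get $\alpha=x\in\m$, contradicting $\alpha\in R(D)^\times$; hence $u\in K^\times$, and $\alpha=u+\sum_{d_j\notin K}1/d_j$ has exactly the asserted shape (with $n=0$, i.e.\ $\alpha\in K^\times$, allowed).

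I do not expect a genuine obstacle here: the statement is essentially a repackaging of Theorem~\ref{thm:islocal}. The one subtlety worth flagging is that being Egyptian does not by itself force invertibility in $R(D)$; what rescues the argument is that, under E-simplicity, the Egyptian elements together with $0$ form a \emph{field} $K$, so $K^\times\subseteq R(D)^\times$, and that is precisely what lets the decomposition $\alpha=u+x$ (with $u\in K$, $x\in\m$) detect units of $R(D)$ on the nose.
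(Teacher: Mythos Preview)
Your proof is correct and is precisely the argument the paper has in mind: the corollary is stated without proof immediately after Theorem~\ref{thm:islocal}, as a direct consequence of the description of $\m$ together with the fact that $K$ is a field inside $R(D)$. Your decomposition $\alpha=u+x$ with $u\in K$ and $x\in\m$, and the observation that $u\neq 0$ forces $u\in K^\times$, is exactly the intended reasoning.
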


In \cite[Lemma 2.3]{nmeGuLo-poly}, an explicit algorithm  is given for computing the inverse in $R(D)$ of an element of the form $
    u + \sum_{i=1}^n 1/f_i$. The inverse has a similar form: $
    u^{-1} + \sum_{i=1}^m 1/g_i$. The algorithm is recursive and requires computing the inverse of all elements of the form $
    u + \sum_{j=1}^k 1/f_{i_j}$ for every $k< n$, where $1 \leq i_1 < \cdots < i_k \leq n$.
It is easy to show that the inverse of $1 + \frac 1 f$ is $1- \frac{1}{f+1}$. However, the inverse of $1 + \frac 1 x + \frac 1 y$ in $R(K[x,y])$ is a sum of $10$ unit fractions, while the inverse of $1 + \frac 1 x + \frac 1 y + \frac{1}{z}$ in $R(K[x,y,z])$ is a sum of $2110$ unit fractions per our algorithm. Thus, practical use of this algorithm would seem limited.

Next we note that there is a straightforward effect that any localization of $D$ has on $R(D)$: \begin{lemma}[{\cite[Lemma 2.2 and Corollary 2.6]{Gu-recomp}}]
\label{lem:localizations}
Let $D$ be an integral domain and $S$ a multiplicative set. Then $R(S^{-1}D) = R(D)[S]$. In the special case where $D$ is E-simple with maximal subfield $K$ and $S=K[x]^{\circ}$ for some $x \in D$, we have $R(D[x^{-1}])= R(S^{-1}D) = R(D)[x]$.
\end{lemma}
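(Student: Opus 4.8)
The plan is to establish the general identity $R(S^{-1}D)=R(D)[S]$ — where, as usual, $R(D)[S]$ denotes the subring of $F:=\Frac D$ generated by $R(D)$ together with the set $S$ — by directly manipulating generators and their reciprocals, and then to obtain the E-simple statement by applying this identity twice and comparing the two subrings of $F$ that result. Throughout one takes $0\notin S$, the case $0\in S$ being degenerate. For the general identity: since $D^\circ\subseteq (S^{-1}D)^\circ$, the generators $1/d$ ($d\in D^\circ$) of $R(D)$ are among those of $R(S^{-1}D)$, so $R(D)\subseteq R(S^{-1}D)$; and for $s\in S$ the element $1/s$ is a nonzero element of $S^{-1}D$, so its reciprocal $s$ is one of the generators of $R(S^{-1}D)$. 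Hence $R(D)\cup S\subseteq R(S^{-1}D)$, giving $R(D)[S]\subseteq R(S^{-1}D)$. Conversely, any nonzero $\delta\in S^{-1}D$ has the form $d/s$ with $d\in D^\circ$, $s\in S$, so $\delta^{-1}=s\cdot(1/d)\in R(D)[S]$; since these $\delta^{-1}$ generate $R(S^{-1}D)$ as a ring, $R(S^{-1}D)\subseteq R(D)[S]$.

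For the special case, suppose $D$ is E-simple with maximal subfield $K$ and $S=K[x]^\circ$ for some $x\in D^\circ$; the subcase $x\in K$ is trivial (then $S=K^\times$, $S^{-1}D=D$, and $x\in R(D)$), so assume $x\notin K$. First I would apply the general identity to the multiplicative set $T:=\{x^n\mid n\ge 0\}$, obtaining $R(D[x^{-1}])=R(T^{-1}D)=R(D)[T]=R(D)[x]$, since adjoining all powers of $x$ is the same as adjoining $x$. Next I would apply it to $S=K[x]^\circ$, obtaining $R(S^{-1}D)=R(D)[K[x]^\circ]$. It then remains to see that $R(D)[K[x]^\circ]=R(D)[x]$: the inclusion ``$\supseteq$'' holds because $x\in K[x]^\circ$, and for ``$\subseteq$'' the key point is that $K\subseteq R(D)$ — every nonzero element of $K$ is Egyptian, hence lies in $R(D)$ by definition (cf.\ Corollary~\ref{cor:units}). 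Therefore $K[x]\subseteq R(D)[x]$, so every generating element $f\in K[x]^\circ$ of $R(D)[K[x]^\circ]$ already lies in $R(D)[x]$, and the inclusion follows. Chaining the equalities yields $R(D[x^{-1}])=R(S^{-1}D)=R(D)[x]$.

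The bulk of this is bookkeeping, and I do not expect a serious obstacle; the one step that genuinely uses the hypotheses is the last identity $R(D)[K[x]^\circ]=R(D)[x]$, i.e.\ the assertion that enlarging the multiplicative set from the powers of $x$ to all of $K[x]^\circ$ leaves the reciprocal complement unchanged even though $(K[x]^\circ)^{-1}D$ is in general strictly larger than $D[x^{-1}]$. This works precisely because E-simplicity forces the Egyptian elements of $D$, together with $0$, to form a subfield $K\subseteq R(D)$, so that $R(D)$ already absorbs every polynomial in $x$ with coefficients in $K$. The places to double-check carefully are this containment $K\subseteq R(D)$ and the harmless edge cases ($0\in S$, and $x\in K$).
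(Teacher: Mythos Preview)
Your argument is correct. The paper does not supply a proof of this lemma; it merely cites \cite[Lemma 2.2 and Corollary 2.6]{Gu-recomp}, so there is nothing to compare your approach against here. Your direct verification of both containments for the general identity is the natural one, and your reduction of the special case to the observation $K\subseteq R(D)$ (so that $K[x]^\circ\subseteq R(D)[x]$) is exactly the right use of E-simplicity. The edge cases you flag are handled appropriately.
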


\subsection{Prime ideals and dimension}
Many of our results concern prime ideals and dimension.
For example,
\begin{defpr}[{\cite[Proposition 2.7(1)]{Gu-recomp}}]\label{pr:pxexists}
For any nonzero $x\in D$, there is a unique prime ideal $\p$ of $R(D)$ maximal with respect to avoiding $1/x$. We call this ideal $\p_x$. 
\end{defpr}
It then follows from Lemma \ref{lem:localizations} that $R(D)_{\p_x} = R(D)[x] = R(D[x^{-1}])$.

For a partial converse to  the previous proposition, we have the following, which was originally stated assuming $\dim R(D)<\infty$, but the same proof establishes the following stronger result:
\begin{thm}[{\cite[Theorem 2.12]{Gu-recomp}}]\label{thm:p=px}
Let $\p\in \Spec R(D)$ such that $\dim R(D)/\p <\infty$.  Then there is some $x\in D^\circ$ such that $\p=\p_x$.

If $\dim R(D) <\infty$, it follows that $R(D)$ has nontrivial pseudoradical.\footnote{The \emph{pseudoradical} of a domain is the intersection of all its nonzero prime ideals.}
\end{thm}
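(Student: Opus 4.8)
The plan is to prove the first assertion by induction on the nonnegative integer $n := \dim R(D)/\p$, and then to obtain the statement about the pseudoradical by applying the first assertion with $\p = 0$. The base case $n = 0$ is immediate: then $\p$ is maximal, so $\p = \m$ by Theorem~\ref{thm:islocal}, and since $1 \in D^\circ$ with $1/1 = 1$ a unit of $R(D)$ (hence in no prime), $\m$ is vacuously the unique maximal prime avoiding $1/1$, i.e.\ $\p = \m = \p_1$.

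For the inductive step, assume $n \geq 1$, so $\p \subsetneq \m$. Since $n$ is finite I can choose a chain of primes $\p = \q_0 \subsetneq \q_1 \subsetneq \cdots \subsetneq \q_n = \m$ of maximal length $n$; then $\q_1$ covers $\p$ and $\dim R(D)/\q_1 = n - 1$ (a prime strictly between $\p$ and $\q_1$, or a longer chain above $\q_1$, would refine the chain above $\p$). By the inductive hypothesis there is $y \in D^\circ$ with $\q_1 = \p_y$; in particular $1/y \notin \q_1$. I then locate a reciprocal inside $\q_1 \setminus \p$: pick any element of $\q_1 \setminus \p$, write it as a sum of reciprocals using the fewest possible terms, and invoke Proposition~\ref{pr:decomposeelement} to conclude that each summand lies in $\q_1$; since their sum is not in $\p$, some summand $1/g$ with $g \in D^\circ$ satisfies $1/g \in \q_1 \setminus \p$.

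The claim, which is the heart of the proof, is that $x := gy \in D^\circ$ satisfies $\p = \p_x$. Writing $1/x = (1/g)(1/y)$: first, $1/x \notin \p$, because $\p$ is prime and neither $1/g$ (by choice) nor $1/y$ (as $1/y \notin \q_1 \supseteq \p$) belongs to $\p$; hence $\p \subseteq \p_x$. The decisive point is that the extra factor $1/y$ forces an \emph{upper} bound on $\p_x$: since $\p_x$ avoids $1/x = (1/g)(1/y)$ it cannot contain $1/y$, so $\p_x$ avoids $1/y$, and maximality of $\p_y$ (Definition/Proposition~\ref{pr:pxexists}) gives $\p_x \subseteq \p_y = \q_1$. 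Finally $\p_x \neq \q_1$, since $1/x = (1/g)(1/y) \in \q_1$ (as $1/g \in \q_1$) while $1/x \notin \p_x$. Thus $\p \subseteq \p_x \subsetneq \q_1$, and because $\q_1$ covers $\p$ this forces $\p_x = \p$, completing the induction.

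For the last assertion, $\dim R(D) < \infty$ gives $\dim R(D)/0 < \infty$, so the first part provides $x \in D^\circ$ with $\p_x = 0$; by the maximality defining $\p_x$, every nonzero prime of $R(D)$ contains the nonzero element $1/x$, so the pseudoradical is nontrivial. As for difficulty: the only real obstacle is producing the prime $\q_1$ that covers $\p$ — this is precisely where finiteness of $\dim R(D)/\p$ is used, and the whole argument rests on sandwiching $\p_x$ between $\p$ and $\q_1$. Once that is available, the one clever move is to take $x = gy$ rather than $x = g$, so that the upper bound $\p_x \subseteq \q_1$ is forced via $\p_y = \q_1$; pleasantly, one never has to analyze the primes lying above $\p$, nor those incomparable to $\p$, and the case $\p = 0$ needs no separate handling (there ``$1/g \notin \p$'' merely says $1/g \neq 0$).
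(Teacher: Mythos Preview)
Your proof is correct. The paper does not reproduce a proof of this statement, merely citing \cite[Theorem 2.12]{Gu-recomp} and noting that the original argument there (stated under the global hypothesis $\dim R(D)<\infty$) goes through under the weaker hypothesis $\dim R(D)/\p<\infty$. Your induction on $n=\dim R(D)/\p$, with the key move of taking $x=gy$ so that $\p_x$ is sandwiched between $\p$ and the covering prime $\q_1=\p_y$, is exactly the natural line and matches the spirit of that argument; the uses of Proposition~\ref{pr:decomposeelement} and of the uniqueness/maximality in Definition/Proposition~\ref{pr:pxexists} are applied correctly.
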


Surprisingly, reducing the reciprocal complement of $D$ by a prime ideal corresponds to taking the reciprocal complement of a \emph{subring} of $D$.
\begin{thm}[{\cite[Theorem 2.16]{Gu-recomp}}]\label{thm:modp}
Let $\p$ be a prime ideal of $R(D)$.  Then the set $$L= \left\{ x \in D \setminus \{0\} \,\middle\vert\, \frac{1}{x} \not \in \p \right\} \cup \{0\}$$ is a subring of $D$, and
 the natural composition $R(L) \hookrightarrow R(D) \onto R(D)/\p$ is an isomorphism.
\end{thm}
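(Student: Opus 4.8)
The plan is to prove the statement in four moves: that $L$ is a subring of $D$; that $R(L)$ sits naturally inside $R(D)$, so the composite is defined; that the composite is surjective; and that it is injective, i.e.\ $R(L)\cap\p=0$. For the subring claim, almost everything is immediate from primeness of $\p$: we have $0\in L$ by definition and $1=1/1\in L$ since $\p$ is proper; if $0\neq x\in L$ then $1/(-x)=-(1/x)\notin\p$ (else $1/x\in\p$), so $-x\in L$; and if $x,y\in L\setminus\{0\}$ then $1/(xy)=(1/x)(1/y)\notin\p$ because $\p$ is prime, so $xy\in L$. The one point requiring an idea is closure under addition: for nonzero $x,y\in L$ with $x+y\neq 0$, I would invoke the identity
\[ \frac{1}{x+y}\left(\frac1x+\frac1y\right)=\frac{1}{xy} \]
in $\Frac D$. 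Since $\frac1x+\frac1y\in R(D)$ while $\frac{1}{xy}=\frac1x\cdot\frac1y\notin\p$ by primeness, the factor $\frac{1}{x+y}$ cannot lie in the ideal $\p$; hence $x+y\in L$, and $L$ is a subring of $D$.

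Because $L\setminus\{0\}\subseteq D\setminus\{0\}$, each generator $1/x$ (for $x\in L\setminus\{0\}$) of $R(L)$ already lies in $R(D)$, so $R(L)\subseteq R(D)$ as subrings of the common fraction field, and composing with $R(D)\onto R(D)/\p$ gives the map to be analyzed. Surjectivity is then easy: a class in $R(D)/\p$ lifts to some $\alpha=\sum_{i=1}^n 1/d_i\in R(D)$; discarding the indices $i$ with $1/d_i\in\p$, which contribute $0$ modulo $\p$, leaves a partial sum whose remaining denominators satisfy $1/d_i\notin\p$ and hence lie in $L\setminus\{0\}$, so this partial sum belongs to $R(L)$ and still represents the given class.

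For injectivity, take $0\neq\alpha=\sum_{i=1}^m 1/y_i\in R(L)$ with $y_i\in L\setminus\{0\}$ and suppose toward a contradiction that $\alpha\in\p$. Clearing denominators, $\alpha=g/h$ with $h=\prod_{i=1}^m y_i$ and $g=\sum_{i=1}^m\prod_{j\neq i}y_j$; by the subring property, $g\in L$, and $g\neq0$ since $h\neq0$ and $\alpha\neq0$, so $1/g\notin\p$. Then $(1/g)\alpha=1/h=\prod_{i=1}^m(1/y_i)\notin\p$ by primeness, whereas $(1/g)\alpha\in\p$ because $\alpha\in\p$ and $\p$ is an ideal --- a contradiction. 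Hence $R(L)\cap\p=0$, the composite is bijective, and we are done. I expect no serious obstacle here: the argument rests only on primeness of $\p$ together with the ring structure of $L$, and the single step one might overlook is the closure of $L$ under addition, which is precisely what keeps $g$ inside $L$ in the final computation.
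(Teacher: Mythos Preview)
Your proof is correct. The paper does not give a self-contained proof of this theorem (it is cited from \cite{Gu-recomp}), but the very ideas you use --- the identity $\frac{1}{x+y}\bigl(\frac1x+\frac1y\bigr)=\frac{1}{xy}$ for closure of $L$ under addition, and the $g/h$ decomposition $\alpha=g/h$ with $g\in L$ to conclude $\alpha\notin\p$ --- are exactly the moves the paper deploys in its proofs of Proposition~\ref{4} and Theorem~\ref{5}\ref{it:pLq}, so your approach coincides with the paper's own treatment of this material.
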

For an update and improvement on Theorems~\ref{thm:p=px} and \ref{thm:modp}, see subsection~\ref{sub:duality}.

Sometimes, there are strong conditions on dimension.  For example, we have the following, with proof simplified in comparison to the original.
\begin{prop}[{\cite[Corollary 2.9]{Gu-recomp}}]
Suppose $R(D)$ is Noetherian. Then $\dim R(D) \leq 1$.
\end{prop}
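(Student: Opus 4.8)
The plan is to deduce the bound from the \emph{nontriviality of the pseudoradical}, which Theorem~\ref{thm:p=px} already hands us. First I would observe that $R(D)$, being Noetherian and (by Theorem~\ref{thm:islocal}) local, has finite Krull dimension, since Krull's height theorem bounds $\dim R(D)$ by the number of generators of the maximal ideal. Hence the hypothesis of Theorem~\ref{thm:p=px} is satisfied, and we obtain a nonzero element $\alpha \in R(D)$ lying in every nonzero prime ideal of $R(D)$. The whole argument then consists of playing this single element $\alpha$ against the Noetherian hypothesis.

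Next I would argue by contradiction, assuming $\dim R(D) \geq 2$ and writing $\m$ for the maximal ideal. Every height-one prime $\p$ of $R(D)$ contains $\alpha$; since $R(D)$ is a domain and $\alpha \neq 0$, no prime strictly below $\p$ can contain $\alpha$, so $\p$ is a minimal prime over the ideal $(\alpha)$. As $R(D)$ is Noetherian, $(\alpha)$ has only finitely many minimal primes, so $R(D)$ has only finitely many height-one primes, say $\p_1,\dots,\p_r$. On the other hand, $\dim R(D)\geq 2$ means $\m$ is not one of them, so $\m\not\subseteq\p_i$ for each $i$, and prime avoidance gives $\m\not\subseteq\p_1\cup\dots\cup\p_r$. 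Choosing $\beta\in\m$ outside this union, we have $\beta\neq 0$, so $(\beta)$ is a proper nonzero ideal, and any minimal prime over $(\beta)$ has height exactly one (at most one by Krull's principal ideal theorem, and nonzero since $R(D)$ is a domain and $\beta\neq 0$); such a prime is one of the $\p_i$ and contains $\beta$, contradicting the choice of $\beta$. Therefore $\dim R(D)\leq 1$.

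I do not expect a real obstacle: the two external inputs are Theorem~\ref{thm:p=px}, already established, and the standard fact that a Noetherian local domain of dimension at least $2$ has infinitely many height-one primes — whose proof is precisely the prime-avoidance-plus-principal-ideal-theorem step above. The only point needing care is checking the hypothesis $\dim R(D)<\infty$ of Theorem~\ref{thm:p=px}, which is immediate once we use both Noetherianity and locality; without locality one would instead need $R(D)$ semilocal, but that does not arise here. An alternative route — using Theorem~\ref{thm:p=px} to write each nonzero prime as a $\p_x$ and then studying $R(D)_{\p_x}=R(D)[x]$ via Lemma~\ref{lem:localizations} — appears to lead to a more roundabout, recursive argument, so I would not take it.
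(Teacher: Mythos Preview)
Your argument is correct and follows the same overall strategy as the paper: use Noetherian + local to get finite Krull dimension, invoke Theorem~\ref{thm:p=px} to obtain a nontrivial pseudoradical, and conclude $\dim R(D)\le 1$. The difference lies only in the final step. The paper packages that step as two citations to Kaplansky: a domain with nontrivial pseudoradical is a G-domain \cite[Theorem 19]{Kap-CR}, and a Noetherian G-domain has dimension at most one \cite[Theorem 146]{Kap-CR}. You instead unpack this into a direct, self-contained argument via Krull's principal ideal theorem and prime avoidance, which is essentially the classical proof that a Noetherian domain of dimension $\ge 2$ has infinitely many height-one primes. Your route avoids the G-domain language and is arguably more transparent for a reader who does not have Kaplansky at hand; the paper's route is shorter on the page and situates the result within a known framework. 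Neither approach has any real advantage in strength or generality here.
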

\begin{proof}
Since $R(D)$ is Noetherian and local (by Theorem~\ref{thm:islocal}), it has finite Krull dimension. Thus, $R(D)$ has nontrivial pseudoradical by Theorem~\ref{thm:p=px}, and hence is a G-domain by \cite[Theorem 19]{Kap-CR}.  But then by \cite[Theorem 146]{Kap-CR}, $\dim R(D) \leq 1$.
\end{proof}

We next present the following result, which generalizes \cite[Theorem 5.2]{Gu-recomp}.
\begin{thm}
Let $T$ be an integral domain, $I$ an ideal of $T$, $G$ the subring of $T$ consisting of its Egyptian elements and $\{0\}$, and $B$ a subring of $G$.  Set $D=B+I$. Then $R(D) = R(B)+R(I)$,  and if $I \cap G=0$, then  $R(I) \in \Spec R(D)$.
\end{thm}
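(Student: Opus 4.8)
The plan is this. If $I = 0$ then $D = B$ and both assertions are trivial, so assume $I \neq 0$; set $I^\circ := I \setminus \{0\}$, and write $R(I)$ for the set of finite sums $\sum 1/y$ with $y \in I^\circ$. First the preliminary observations: $D \subseteq T$ (as $B \subseteq G \subseteq T$ and $I \subseteq T$); $I$ is an ideal of $D$ (for $d \in D \subseteq T$ and $y \in I$ one has $dy \in I$); and $\Frac D = \Frac T =: F$ (for $t \in T$ and a fixed $y \in I^\circ$, $t = ty/y$ with $ty \in I \subseteq D$). Hence $R(D) \subseteq R(T)$, since every generator $1/d$ of $R(D)$ has $d \in T^\circ$. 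Because $I$ is closed under negation and is an ideal of $D$, the set $R(I)$ is a subgroup of $(F,+)$ with $R(D) \cdot R(I) \subseteq R(I)$, i.e.\ an ideal of $R(D)$; and since $\beta y \in I$ whenever $\beta \in B$ and $y \in I$, also $R(B) \cdot R(I) \subseteq R(I)$, so $R(B) + R(I)$ is a subring of $F$. The inclusion $R(B) + R(I) \subseteq R(D)$ is immediate, as $B^\circ \cup I^\circ \subseteq D^\circ$.

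For $R(D) \subseteq R(B) + R(I)$ it suffices, $R(B)+R(I)$ being a ring, to place each generator $1/d$ (with $0 \ne d \in D$) in it. Fix a decomposition $d = b + x$, $b \in B$, $x \in I$. If $x = 0$ then $1/d = 1/b \in R(B)$; if $b = 0$ then $1/d = 1/x \in R(I)$. Otherwise $b, x$ are both nonzero, and here I would use
\[ \frac{1}{d} \;=\; \frac{1}{x} - \frac{b}{xd}, \]
valid since $d - x = b$. Now $1/x \in R(I)$; and since $b$ is a nonzero element of $B \subseteq G$, it is Egyptian \emph{in $T$}, say $b = \sum_{k=1}^K 1/t_k$ with $t_k \in T^\circ$. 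Then $b/(xd) = \sum_k 1/(t_k x d)$, and each $t_k x d \in I^\circ$ because $x d \in I$ ($I$ being an ideal of $D$) and $I$ is an ideal of $T$. So $b/(xd) \in R(I)$, whence $1/d = 1/x - b/(xd) \in R(I)$. This proves $R(D) = R(B) + R(I)$. I expect this identity, together with the point that one must write $b$ as a sum of reciprocals of elements of the \emph{overring} $T$ (not of $B$ or $D$), to be the only nonroutine step.

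For the second assertion, assume $I \cap G = 0$. First, $R(I) \subseteq \m$, the maximal ideal of $R(D)$: by Theorem~\ref{thm:islocal} it suffices that no $y \in I^\circ$ be Egyptian in $D$, and indeed $y \in R(D) \subseteq R(T)$ would make $y$ Egyptian in $T$, forcing $y \in I \cap G = 0$. In particular $R(I)$ is proper. Second, $R(B) \cap R(I) = 0$. If $0 \ne \gamma \in R(B) \cap R(I)$, write $\gamma = \sum_{i=1}^n 1/\beta_i$ ($\beta_i \in B^\circ$) and clear denominators: $\beta := \prod_i \beta_i \in B^\circ$ and $a := \gamma\beta = \sum_i \prod_{i' \ne i} \beta_{i'} \in B^\circ$. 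On the other hand, writing $\gamma = \sum_j 1/y_j$ ($y_j \in I^\circ$) and $\beta = \sum_k 1/t_k$ ($t_k \in T^\circ$, again since $\beta$ is Egyptian in $T$), we get $a = \gamma\beta = \sum_{j,k} 1/(t_k y_j)$ with each $t_k y_j \in I^\circ$, so $a \in R(I)$. But $a \in B^\circ$ gives $1/a \in R(B) \subseteq R(D)$, so $1 = a \cdot (1/a)$ lies in the ideal $R(I)$, contradicting $R(I) \subseteq \m$. Hence $R(B) \cap R(I) = 0$, so $R(D)/R(I) = (R(B) + R(I))/R(I) \cong R(B)$, a nonzero domain (subring of $F$); thus $R(I) \in \Spec R(D)$.

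The real obstacle is locating the telescoping identity of Part 1 and the observation that $b$'s Egyptian expansion should be taken in $T$; the remaining points are bookkeeping, with the minor subtleties that $R(I)$ is closed under subtraction (so a difference of its elements stays in it) and that $1 = a \cdot a^{-1} \in R(I)$ delivers the contradiction in Part 2.
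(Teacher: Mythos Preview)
Your proof is correct and reaches the same conclusions, but via somewhat different mechanics than the paper. For the inclusion $R(D)\subseteq R(B)+R(I)$, the paper writes
\[
\frac{1}{b+i}=\frac{1}{i}\cdot\frac{1}{b}\cdot\left(\frac{1}{b}+\frac{1}{i}\right)^{-1}
\]
and argues that the last factor is a unit of $R(T)$ (using that $R(T)$ is local and $1/b$ is a unit there), concluding $1/d\in \frac{1}{i}\,R(T)\subseteq R(I)$; your identity $\frac{1}{d}=\frac{1}{x}-\frac{b}{xd}$ followed by an explicit Egyptian expansion of $b$ in $T$ is more direct and avoids any appeal to the locality of $R(T)$. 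For $R(B)\cap R(I)=0$, the paper again works inside $R(T)$: nonzero elements of $R(B)$ are units of $R(T)$ while (under $I\cap G=0$) $R(I)$ lies in the maximal ideal of $R(T)$, so the intersection vanishes immediately. Your argument instead manufactures an element $a\in B^\circ\cap R(I)$ and derives $1\in R(I)$, which is essentially the same idea carried out in $R(D)$ rather than $R(T)$. Both routes are short; the paper's is slightly slicker in invoking the local structure of $R(T)$ once and for all, while yours is more self-contained at the level of explicit fraction manipulations and does not require knowing that $R(T)$ is local.
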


\begin{proof}
We define $R(I)$ to be all sums of elements of the form $\frac 1i$, $0\neq i\in I$; it is elementary to show that $R(I)$ is an ideal, both of $R(D)$ and $R(T)$.  Since $R(B)$ is a subring of $R(D)$, it follows that $R(B) + R(I) \subseteq R(D)$.

For the opposite inclusion, let $d\in D^\circ$.   Then $d=b+i$ for some $b\in B$ and $i\in I$.  If $b=0$ or $i=0$, then clearly $\frac 1d \in R(B)+R(I)$.  Otherwise, since $b$ is a $T$-Egyptian element, $\frac 1b$ is a unit of $R(T)$.  If $\frac 1i$ is in the maximal ideal of $R(T)$, it follows that $\frac 1b+\frac 1i$ is also a unit of $R(T)$, since $R(T)$ is a local ring. If instead $\frac 1i$ is a unit in $R(T)$, then $i \in G$ and $\frac 1b + \frac 1i \in \Frac G.$ Since $d \neq 0$, we get that $\frac 1b + \frac 1i$ is nonzero and therefore again a unit in $R(T)$.  

Thus, \[
\frac 1d = \frac 1{b+i} = \frac 1i \frac 1b \frac 1{\frac 1b+\frac 1i} \in \frac 1i R(T) \subseteq R(I).
\]
Finally, since any nonzero element of $R(B)$ is a unit of $R(T)$, whereas $R(I)$ is contained in the maximal ideal of $R(T)$ if we assume that $I \cap G =0$ , it follows that $R(B) \cap R(I) = 0$.  Thus, $R(D) / R(I) = R(B) + R(I) / R(I) \cong R(B) / R(B) \cap R(I) \cong R(B)$.  Since the latter is an integral domain, it follows that $R(I)$ is prime in $R(D)$. 
\end{proof}

In general, regarding the Krull dimension of a reciprocal complement, we 
consider the following: \begin{conjecture}\label{con:dim}
For any integral domain $D$, $\dim R(D) \leq \dim D$.
\end{conjecture}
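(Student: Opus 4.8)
The plan is to reduce Conjecture~\ref{con:dim} to one local statement about the subrings furnished by Theorem~\ref{thm:modp}, and then to iterate along a chain of primes. Write $d = \dim D$ (there is nothing to prove if $d = \infty$, and, to keep the sketch clean, we also assume $\dim R(D) < \infty$, flagging the general case below). Suppose toward a contradiction that $\dim R(D) \geq d+1$. Since $R(D)$ is local (Theorem~\ref{thm:islocal}), we may pick a saturated chain $0 = \p_0 \subsetneq \p_1 \subsetneq \cdots \subsetneq \p_{d+1}$, so that no prime lies strictly between consecutive terms. By Theorem~\ref{thm:modp}, the set $L_i := \{\, y \in D^\circ : 1/y \notin \p_i \,\} \cup \{0\}$ is a subring of $D$ with $R(L_i) \cong R(D)/\p_i$ via the composite $R(L_i) \hookrightarrow R(D) \onto R(D)/\p_i$. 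Since every nonzero prime of $R(D)$ is generated by reciprocals, the strict inclusion $\p_{i-1} \subsetneq \p_i$ forces a strict inclusion $L_i \subsetneq L_{i-1}$; and transporting Theorem~\ref{thm:modp} through the isomorphism $R(L_{i-1}) \cong R(D)/\p_{i-1}$ identifies $L_i \subsetneq L_{i-1}$ with the inclusion attached by that theorem to the height-one prime of $R(L_{i-1})$ corresponding to $\p_i/\p_{i-1}$.

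Thus Conjecture~\ref{con:dim} would follow from the following \textbf{Local step}: \emph{for every integral domain $A$ and every height-one prime $\p$ of $R(A)$, the subring $L = \{\, y \in A^\circ : 1/y \notin \p\,\}\cup\{0\}$ of Theorem~\ref{thm:modp} satisfies $\dim L \leq \dim A - 1$.} Indeed, applying the Local step to $L_i \subsetneq L_{i-1}$ for $i = 1, \dots, d+1$ yields $\dim L_{d+1} \leq \dim L_0 - (d+1) = d - (d+1) < 0$, which is absurd. One would ideally prove the quantitative refinement $\dim L \leq \dim A - \hgt\p$ for arbitrary primes $\p$ of finite coheight, applied to $\p = \p_x$; this collapses the iteration into one step and may be the more natural target, though it leaves the case ``$\dim D$ finite, $\dim R(D)$ infinite'' — where Theorem~\ref{thm:p=px} need not apply — to be handled by a uniform version of the same idea.

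We expect the Local step to be the real obstacle. Its difficulty is that $L$ and $A$ are birational (when $\p = \p_x$ one even has $x \in L$), and a proper birational subring can perfectly well have the same Krull dimension as its overring — so the required drop must be extracted from the primality of $\p$, not from $L$ being ``small''. One promising angle is localization: by the remark after Definition/Proposition~\ref{pr:pxexists}, $R(A)_{\p_x} = R(A[x^{-1}])$, so $\hgt\p_x = \dim R(A[x^{-1}])$, and granting the conjecture for $A[x^{-1}]$ together with the inequality $\dim A[x^{-1}] < \dim A$ whenever $\p_x \neq 0$ would finish the Local step — but this last inequality fails for arbitrary $x$ (e.g.\ $A = \Z$, $x = 2$), so one must choose $x$ judiciously within its $\p_x$-class, or choose the chain with foresight. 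Valuative methods — dominating the one-dimensional local domain $R(A)_\p$ by a valuation ring of $\Frac A$ and tracking its center — are an alternative, but appear to meet similar difficulties. At bottom, the obstruction is the absence of any ring homomorphism $D \to R(D)$ along which primes could be contracted: a chain in $\Spec R(D)$ must be converted into a chain in $\Spec D$ by indirect means, and that is what has kept the statement a conjecture.
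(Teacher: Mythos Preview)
This statement is presented in the paper as an open \emph{conjecture}, not a theorem: the paper offers no proof, only the remark that it holds in the special cases listed immediately afterward (Egyptian domains, finitely generated algebras over a field, and certain semigroup algebras). There is therefore no ``paper's own proof'' against which to compare your attempt.

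Your proposal is equally candid: you do not claim a proof, only a reduction to a ``Local step'' that you explicitly label ``the real obstacle'' and leave unproved. The reduction itself is set up correctly --- the iteration via Theorem~\ref{thm:modp} is valid, and your identification of $L_i$ with the subring of $L_{i-1}$ attached to the prime $\p_i/\p_{i-1}$ of $R(L_{i-1})$ checks out (under your standing hypothesis $\dim R(D)<\infty$, any chain of length $d+1$ can indeed be refined to a saturated one, so the height-one assumption is available at each stage). But all the content lies in the Local step, and your own diagnosis of why it resists attack --- a proper birational subring need not drop in Krull dimension, and the localization route via $R(A)_{\p_x}=R(A[x^{-1}])$ stalls because inverting a single element need not lower $\dim A$ --- is accurate. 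What you have written is a reasonable strategic outline, honestly presented as such; it neither proves the conjecture nor conflicts with the paper, which also does not prove it.
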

The above conjecture holds whenever $D$ is Egyptian (since then $R(D)$ is a field), or $D$ is a finitely generated $K$-algebra for a field $K$ \cite[Theorem 3.2]{Gu-recomp}, or $D=K[S]$, where $K$ is a field and $S$ is a submonoid of the positive part of a totally ordered abelian group of finite real rank \cite[Theorem 4.10 and Remark 4.11]{Gu-recomp}.

\subsection{Polynomial rings over a field}
We can say quite a bit about $R(D)$, in the special case where $D= D_n^K = K[x_1, \ldots, x_n]$.  First, it has the `expected' dimension.

\begin{thm}[{\cite[Theorem 4.4]{nmeGuLo-poly}}]\label{dimn}
We have $\dim R(D_n^K) = n$.
\end{thm}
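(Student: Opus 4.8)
I would prove the two inequalities $\dim R(D_n^K) \leq n$ and $\dim R(D_n^K) \geq n$ separately. The upper bound is already available: $D_n^K = K[x_1, \ldots, x_n]$ is a finitely generated $K$-algebra, so Conjecture~\ref{con:dim} holds for it by \cite[Theorem 3.2]{Gu-recomp} (as recalled just above), whence $\dim R(D_n^K) \leq \dim D_n^K = n$. The substantive part is therefore the lower bound, which I would prove by induction on $n$, phrased uniformly over all base fields: for every field $K$ and every $n \geq 1$, one has $\dim R(K[x_1, \ldots, x_n]) \geq n$.

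The base case $n = 1$ is immediate from Theorem~\ref{thm:Euclidean}: $R(K[x_1])$ is a DVR, and it is not a field since $K[x_1]$ is not Egyptian, so $\dim R(K[x_1]) = 1$. For the inductive step, fix $n \geq 2$ and a field $K$, and set $S = K[x_n]^\circ \subseteq D_n^K$. The key point is that localizing at $S$ absorbs a variable into the coefficient field, namely
\[
S^{-1} D_n^K = K(x_n)[x_1, \ldots, x_{n-1}],
\]
a polynomial ring in $n - 1$ variables over the field $K(x_n)$. Since the degree-zero part of the (standardly graded) domain $D_n^K$ is the field $K$, Proposition~\ref{pr:gradedarenotegyptian} shows that $D_n^K$ is E-simple with maximal subfield $K$. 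Hence the special case of Lemma~\ref{lem:localizations} with $x = x_n$ applies and gives
\[
R(D_n^K)[x_n] = R(S^{-1} D_n^K) = R\bigl(K(x_n)[x_1, \ldots, x_{n-1}]\bigr),
\]
which has Krull dimension $n - 1$ by the inductive hypothesis applied over the field $K(x_n)$.

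Next I would invoke the prime $\p_{x_n}$ of Definition/Proposition~\ref{pr:pxexists}. By the remark following it, $R(D_n^K)_{\p_{x_n}} = R(D_n^K)[x_n]$, so $\height \p_{x_n} = \dim R(D_n^K)_{\p_{x_n}} = n - 1$. On the other hand, $1/x_n$ lies in the maximal ideal $\m$ of the local ring $R(D_n^K)$ (Theorem~\ref{thm:islocal}, since $x_n$ is not Egyptian) but avoids $\p_{x_n}$ by construction, so $\p_{x_n} \subsetneq \m$. Prepending $\m$ to a chain of primes of length $n - 1$ ending at $\p_{x_n}$ produces a chain of length $n$ in $R(D_n^K)$; hence $\dim R(D_n^K) \geq n$, and together with the upper bound this closes the induction.

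The step I expect to demand the most care is the inductive one, and specifically the realization that the correct set to invert is $S = K[x_n]^\circ$ rather than merely the powers of $x_n$, so that one variable is traded for a transcendental extension of the base field and the induction on the number of variables can run with a growing coefficient field. Once that is seen, the rest is a matter of correctly chaining the identifications $R(D_n^K)_{\p_{x_n}} = R(D_n^K)[x_n] = R(S^{-1} D_n^K) = R(K(x_n)[x_1, \ldots, x_{n-1}])$ furnished by Lemma~\ref{lem:localizations} and the discussion around $\p_x$.
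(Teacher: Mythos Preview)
Your proposal is correct and follows essentially the same route as the source paper: the survey does not reproduce a proof of Theorem~\ref{dimn}, but the surrounding Theorem~\ref{thm:heightofprimes} (drawn from \cite[Lemma 4.1, Proposition 4.2, Lemma 4.3]{nmeGuLo-poly}) records exactly the localization identity $R(D_n^K)_{\p_J} = R\bigl(D_{[n]\setminus J}^{K(\{x_i\mid i\in J\})}\bigr)$ and the height formula $\hgt \p_J = n - \#(J)$, of which your inductive step with $J=\{n\}$ is the special case that drives the argument. One cosmetic remark: you write ``prepending $\m$'' to the chain ending at $\p_{x_n}$, but since $\p_{x_n}\subsetneq\m$ you mean appending $\m$ at the top; the content is unaffected.
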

Next, it has many prime ideals as long as $n\geq 2$.
\begin{prop}[{\cite[Theorem 6.6]{nmeGuLo-poly}}]\label{pr:inftyprimes} Suppose $n \geq 2$.
Then, for each $1\leq i \leq n-1$, $R(D_n^K)$ has infinitely many primes of height $i$.
\end{prop}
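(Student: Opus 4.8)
Fix $i$ with $1\le i\le n-1$, write $D:=D_n^K$, set $m:=n-i$ (so $1\le m\le n-1$), and for a polynomial $g\in K[x_{m+1}]$ consider
\[
f_g:=x_1x_2\cdots x_{m-1}\,\bigl(x_m-g(x_{m+1})\bigr)\in D^\circ,\qquad \p_g:=\p_{f_g}
\]
in the notation of Definition/Proposition~\ref{pr:pxexists} (when $m=1$ the product $x_1\cdots x_{m-1}$ is empty, so $f_g=x_1-g(x_2)$). The plan is to show that every $\p_g$ has height $i$ and that the $\p_g$ are pairwise distinct as $g$ runs through $x_{m+1},x_{m+1}^2,x_{m+1}^3,\dots$. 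For the height, recall from the remark after Definition/Proposition~\ref{pr:pxexists} that $R(D)_{\p_g}=R(D[1/f_g])$. Applying the triangular $K$-automorphism of $D$ sending $x_m\mapsto x_m-g(x_{m+1})$ and fixing the other variables, and renaming $y_k:=x_k$ for $k\ne m$ and $y_m:=x_m-g(x_{m+1})$, we get $f_g=y_1\cdots y_m$ and $D[1/f_g]=K[y_1^{\pm1},\dots,y_m^{\pm1},y_{m+1},\dots,y_n]$. I would then pass to the Egyptian reduction \cite[Proposition~2.2]{nmeGuLo-poly}, $R(D[1/f_g])=R(E^{-1}D[1/f_g])$ with $E$ the set of Egyptian elements: each Laurent monomial $\lambda y_1^{a_1}\cdots y_m^{a_m}$ equals $1/(\lambda^{-1}y_1^{-a_1}\cdots y_m^{-a_m})$, so the whole Laurent ring $K[y_1^{\pm1},\dots,y_m^{\pm1}]$ lies in $E$; inverting it already yields the polynomial ring $P_g:=K(y_1,\dots,y_m)[y_{m+1},\dots,y_n]$, and any further element of $E$ lies in $D[1/f_g]\subseteq P_g$, hence is Egyptian in $P_g$, so (the Egyptian elements of $P_g$ forming a ring, and by Theorem~\ref{thm:islocal}) its reciprocal is already a unit of $R(P_g)$. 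Thus $R(D[1/f_g])=R(P_g)$, and since $P_g$ is a polynomial ring in $i=n-m$ variables over a field, Theorem~\ref{dimn} gives $\dim R(P_g)=i$, i.e.\ $\hgt\p_g=i$.

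For distinctness, let $g,h\in K[x_{m+1}]$ with $g-h\notin K$, and use the test element $\theta_h:=1/(x_m-h(x_{m+1}))\in R(D)$. Inside $R(D)_{\p_g}=R(P_g)$ we have $x_m-h(x_{m+1})=y_m+(g-h)(y_{m+1})$, which has positive degree in $y_{m+1}$, hence is a non-unit of $P_g$; as $P_g$ is E-simple by Proposition~\ref{pr:gradedarenotegyptian}, this element is not Egyptian in $P_g$, so by Theorem~\ref{thm:islocal} $\theta_h$ lies in the maximal ideal of $R(P_g)$, whence $\theta_h\in\p_g$. Running the same computation for $h$, in $R(D)_{\p_h}=R(P_h)$ the element $x_m-h(x_{m+1})$ is a (nonzero) constant of the polynomial ring $P_h$, hence a unit, so $\theta_h$ is a unit of $R(P_h)$ and $\theta_h\notin\p_h$. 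Therefore $\p_g\ne\p_h$. Taking $g=x_{m+1},x_{m+1}^2,x_{m+1}^3,\dots$ (pairwise differences non-constant) produces infinitely many primes of height $i$, for every $1\le i\le n-1$ and over an arbitrary field $K$.

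I expect the main obstacle to be the identification $R(D[1/f_g])=R(P_g)$. The localized ring $D[1/f_g]$ is \emph{not} E-simple --- with respect to the grading by degree in $x_{m+1},\dots,x_n$ its degree-zero part is a Laurent polynomial ring, not a field --- so one cannot simply invoke Corollary~\ref{cor:units}, and it is not obvious how to describe all Egyptian elements of $D[1/f_g]$ directly. The way around this is to note that one never needs to: whatever the extra Egyptian elements are, they are Egyptian in the overring $P_g$, so they become units once we form $R(P_g)$ and hence do not change it. A secondary subtlety, worth flagging, is the choice of the family itself: the seemingly natural candidates $\p_\ell$ for linear forms $\ell$ do not work, since inverting $\ell$ together with all Egyptian elements always yields the same subring of $\Frac D$ up to scaling $\ell$ and adding a constant, so one gets only finitely many primes when $K$ is finite --- twisting one coordinate by a polynomial $g(x_{m+1})$ is exactly what breaks this degeneracy.
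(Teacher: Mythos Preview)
Your argument is correct. The present paper does not give a proof of this proposition---it is quoted from \cite[Theorem 6.6]{nmeGuLo-poly}---so there is nothing here to compare against directly, but your method is sound and self-contained within the survey's framework: the identification $R(D)_{\p_{f_g}}=R(D[1/f_g])$ from the remark following Definition/Proposition~\ref{pr:pxexists}, the triangular change of variables reducing $D[1/f_g]$ to a Laurent polynomial ring, and the passage to $P_g$ via Egyptian localization are all justified. The distinctness argument via the test element $\theta_h$ is clean and works over any field.

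Two minor remarks on presentation. First, the passage from $R(D[1/f_g])$ to $R(P_g)$ can be said more directly using Lemma~\ref{lem:localizations}: with $S=K[y_1^{\pm1},\dots,y_m^{\pm1}]^\circ$ one has $R(S^{-1}D[1/f_g])=R(D[1/f_g])[S]$, and since every element of $S$ is a unit of $D[1/f_g]$ (hence a unit of $R(D[1/f_g])$), adjoining $S$ changes nothing; thus $R(P_g)=R(D[1/f_g])$ without needing to analyze the full Egyptian set $E$. Second, your closing paragraph about linear forms $\ell$ giving only finitely many primes over finite fields is a nice observation motivating your choice of family, though it is commentary rather than proof.
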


One might wonder about the factorization properties of $R(D_n^K)$.  First of all, factorizations of elements exist. \begin{thm}[{\cite[Theorem 3.11]{nmeGuLo-poly}}]\label{atomic}
Every nonzero nonunit element of $R(D_n^K)$ is a product of irreducible elements. That is, $R(D^K_n)$ is \emph{atomic}.
\end{thm}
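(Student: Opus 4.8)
The plan is to equip $R := R(D_n^K)$ with a $\Z_{\ge 0}$-valued, multiplicatively additive function on $R \setminus \{0\}$ that is positive on every nonunit; atomicity then drops out by induction on its value. Write $D = D_n^K = K[x_1,\dots,x_n]$ and $F = \Frac D$. Total degree of polynomials extends to a group homomorphism $\deg\colon F^\times \to \Z$ via $\deg(f/g) = \deg f - \deg g$ (well defined since $\deg$ is additive on $D^\circ$), and I set $v := -\deg$, so $v(\alpha\beta) = v(\alpha) + v(\beta)$ for nonzero $\alpha,\beta \in F$. First I would check that $v \ge 0$ on $R \setminus \{0\}$: for $\alpha = \sum_{i=1}^m 1/d_i$ with $d_i \in D^\circ$ we have $\alpha = N/\prod_i d_i$ with $N = \sum_i \prod_{j\ne i} d_j$, and since $\deg N \le \max_i\bigl(\sum_{j\ne i}\deg d_j\bigr) \le \sum_i \deg d_i$, we get $v(\alpha) = \sum_i \deg d_i - \deg N \ge 0$.

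The crux is: if $\alpha \in R$ is nonzero with $v(\alpha) = 0$, then $\alpha$ is a unit. Here I would use that $D$ is $\Z_{\ge 0}$-graded with $D_0 = K$, hence E-simple with maximal subfield $K$ (Proposition~\ref{pr:gradedarenotegyptian}), so that Corollary~\ref{cor:units} applies with its ``$K$'' equal to our field. Fix a representation $\alpha = \sum_{i=1}^m 1/d_i$ with $m$ minimal. From $v(\alpha) = 0$, the computation above forces $\deg N = \sum_i \deg d_i$, which combined with $\deg N \le \sum_i \deg d_i - \min_i \deg d_i$ gives $\min_i \deg d_i = 0$; so at least one $d_i$ lies in $K^\times$. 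Grouping the constant denominators, write $\alpha = u_0 + \beta$ with $u_0 = \sum_{\deg d_i = 0} 1/d_i \in K$ and $\beta = \sum_{\deg d_i > 0} 1/d_i$. If $u_0 = 0$, then $\alpha = \beta$ is a sum of strictly fewer than $m$ reciprocals, contradicting minimality; hence $u_0 \in K^\times$, and then $\alpha = u_0 + \sum_{\deg d_i > 0} 1/d_i$ is exactly of the shape described in Corollary~\ref{cor:units} (each remaining denominator lies in $D \setminus K$), so $\alpha$ is a unit. (If $\beta = 0$ then $\alpha = u_0 \in K^\times$, which is a unit of $R$ anyway.)

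Atomicity now follows by induction on $v(\alpha)$. If $\alpha \in R$ is a nonzero nonunit, then $v(\alpha) \ge 1$ by the crux. If $\alpha$ is not irreducible, write $\alpha = \beta\gamma$ with $\beta,\gamma$ nonunits; then $v(\beta), v(\gamma) \ge 1$ and $v(\beta)+v(\gamma) = v(\alpha)$, so $v(\beta), v(\gamma) < v(\alpha)$, and the inductive hypothesis applies to $\beta$ and $\gamma$. The base case $v(\alpha) = 1$ is trivial, since such an $\alpha$ cannot be a product of two nonunits. The only step that requires genuine care is the crux — converting the numerical condition $v(\alpha) = 0$ into the explicit unit-shape of Corollary~\ref{cor:units} via minimality of the representation; the additivity of $v$, its nonnegativity on $R$, and the concluding induction are all formal.
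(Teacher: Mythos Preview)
Your argument is correct. The paper is a survey and merely cites this result from \cite{nmeGuLo-poly} without reproducing a proof, so there is no argument in the present paper to compare yours against directly. That said, your approach---using the $\Z$-valued function $v=-\deg$, which is exactly the valuation attached to the overring $W=\{f/g:\deg f\le\deg g\}$ appearing after Definition~\ref{def:GAg}---is the expected one, and the minimality trick that forces a nonzero constant summand (hence the unit shape of Corollary~\ref{cor:units}) is clean. One small remark: your bound $\deg N\le\sum_i\deg d_i-\min_i\deg d_i$ implicitly uses $N\ne 0$, which you have since $\alpha\ne 0$; it may be worth saying so explicitly.
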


However, its factorizations are not well-behaved when $n\geq 2$.
\begin{thm}[{\cite[Theorem 5.8]{nmeGuLo-poly}}]
For any $n\geq 2$, $R(D_n^K)$ is not integrally closed.  Hence, it is not a UFD.
\end{thm}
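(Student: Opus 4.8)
The second assertion follows from the first by a standard fact: a UFD is integrally closed (if $\alpha=a/b\in\Frac R$ is written in lowest terms and is integral over a UFD $R$, clearing denominators in a monic relation forces $b\mid a^{N}$, hence $b\in R^{\times}$ and $\alpha\in R$). So the whole task is to exhibit, for each $n\ge 2$, an element $\alpha\in K(x_1,\dots,x_n)=\Frac R(D_n^K)$ that is integral over $R(D_n^K)$ but is not Egyptian.

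I would build $\alpha$ out of $x_1,x_2$ only, and certify ``non-Egyptian'' by localizing. Write $D=D_n^K$. By Lemma~\ref{lem:localizations} and the identity $R(D)_{\p_x}=R(D[x^{-1}])$ noted after Definition/Proposition~\ref{pr:pxexists}, we have $R(D)_{\p_{x_1}}=R(K[x_1^{\pm1},x_2,\dots,x_n])$. In that ring the Egyptian elements are exactly $K[x_1^{\pm1}]^{\circ}$: each element of $K[x_1^{\pm1}]$ is Egyptian because $R(K[x_1^{\pm1}])=K(x_1)$ (Lemma~\ref{lem:localizations}), while anything of positive degree in $x_2,\dots,x_n$ is not Egyptian by Proposition~\ref{pr:gradedarenotegyptian}. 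Inverting the Egyptian elements $E$ and using $R(D)=R(E^{-1}D)$ then gives $R(D)_{\p_{x_1}}\cong R(D_{n-1}^{K(x_1)})$. Iterating this (localizing in turn at $\p_{x_3},\dots,\p_{x_n}$) identifies a localization of $R(D)$ with $R(D_1^{K'})=K'[x_2^{-1}]_{(x_2^{-1})}$, a DVR, where $K'=K(x_1,x_3,\dots,x_n)$; by Theorem~\ref{thm:Euclidean} this equals $\{F/G:F,G\in K'[x_2],\ \deg_{x_2}F\le\deg_{x_2}G\}$, and symmetrically one obtains such discrete valuation rings with $x_1$ in place of $x_2$, and for the ``diagonal'' directions $x_1+cx_2$. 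Since $R(D)$ sits inside each of these, $\alpha=F/G\in K(x_1,x_2)$ is certainly not in $R(D)$ whenever $\deg_{x_2}F>\deg_{x_2}G$ over $K(x_1)$, or the analogue for another direction.

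The difficulty — and the heart of the theorem — is that such degree tests \emph{cannot} by themselves witness failure of integral closure: the discrete valuation rings above are integrally closed, so any $\alpha$ that is integral over $R(D)$ automatically lies in all of them and passes every degree test. The witness must therefore be some $\alpha=a/c$ with $a,c\in R(D)$ and $c$ a nonunit, which clears all degree obstructions and yet is still not a finite sum of reciprocals of polynomials. The most economical target is to arrange $\alpha^{2}\in R(D)$ with $\alpha\notin R(D)$, so that $\alpha$ is a root of $T^{2}-\alpha^{2}$ and in fact $R(D_n^K)$ fails to be even seminormal (in characteristic $2$ one would instead aim for $\alpha^{3}\in R(D)$, $\alpha^{2}\notin R(D)$, or argue by hand). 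Checking that $\alpha^{2}$ — equivalently, the elementary symmetric functions of $\alpha$ and its $R(D)$-conjugate — is Egyptian should be a bookkeeping task: write it as an explicit finite sum of unit fractions, with the inversion recipes of Corollary~\ref{cor:units} and \cite[Lemma~2.3]{nmeGuLo-poly} at hand. The genuinely hard step is proving $\alpha\notin R(D_n^K)$, since, as just explained, this cannot follow from any ``degree at infinity'' invariant; it requires the finer description of which elements of $K(x_1,x_2)$ are Egyptian that is special to polynomial rings — for instance explicit generators and relations for $R(K[x_1,x_2])$ as in \cite{nmeGuLo-poly}, or the factroid criterion of \cite{Elnme-factroids}.

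Finally, once such an $\alpha$ is found for $n=2$, the same $\alpha\in K(x_1,x_2)\subseteq K(x_1,\dots,x_n)$ handles all $n\ge 2$: it remains integral over $R(D_2^K)\subseteq R(D_n^K)$, and $\alpha\notin R(D_n^K)$ follows either from the iterated-localization identification of a localization of $R(D_n^K)$ with $R(D_2^{K(x_3,\dots,x_n)})$ — together with the observation that an element of $K(x_1,x_2)$ which is Egyptian over the larger field $K(x_3,\dots,x_n)$ must already be Egyptian over $K$ — or by running the $n=2$ construction verbatim inside $R(D_n^K)$.
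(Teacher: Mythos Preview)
Your proposal is not a proof; it is a strategic discussion that correctly identifies what needs to be done but carries out none of it. The entire content of the theorem is the construction of a specific $\alpha\in K(x_1,x_2)$ that is integral over $R(D_n^K)$ but not Egyptian, together with verifications of both facts. You never write down any candidate $\alpha$; you wave off the integrality check as ``a bookkeeping task'' and explicitly concede that establishing $\alpha\notin R(D_n^K)$ is ``the genuinely hard step'' requiring tools you do not apply. Everything you \emph{do} write --- the reduction from UFD to integral closure, the localization identifications with DVRs from Theorem~\ref{thm:heightofprimes}, and the correct observation that degree-at-infinity valuations cannot by themselves detect the failure --- is accurate and useful orientation, but it is preamble, not argument.

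Note also that the present paper is a survey and does not reproduce the proof; it simply cites \cite[Theorem~5.8]{nmeGuLo-poly}. That reference carries out exactly the program you outline: it names a concrete $\alpha$, checks integrality by exhibiting an explicit monic relation (via Egyptian-fraction expansions), and proves $\alpha\notin R(D)$ using the detailed structure theory for $R(K[x_1,x_2])$ developed there. To complete your proposal you must do the same --- produce $\alpha$, certify $\alpha^2\in R(D)$ (or the analogous integrality datum) by an explicit expansion, and then rule out $\alpha\in R(D)$, for instance via the factroid criterion of Theorem~\ref{thm:factroids} or the membership results of \cite{nmeGuLo-poly}. Without that, there is no proof here.
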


For more poor behavior, consider the following.

\begin{thm}[{\cite[Corollary 5.7]{nmeGuLo-poly}}]
If $n\geq 2$, $R(D_n^K)$ is not coherent. Hence it is not Noetherian.
\end{thm}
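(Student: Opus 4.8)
The plan is to use the standard characterization: a commutative domain $A$ is coherent if and only if every finitely generated ideal is finitely presented, and a two‑generated ideal $(a,b)$ of a domain is finitely presented precisely when $aA\cap bA$ is finitely generated (the syzygy module of $(a,b)$ is isomorphic, via $(r,s)\mapsto ra$, to $aA\cap bA$). So it is enough to exhibit two principal ideals of $R(D_n^K)$ whose intersection is not finitely generated. Granting this, ``hence it is not Noetherian'' is immediate, since every finitely generated module over a Noetherian ring is finitely presented — alternatively one may invoke $\dim R(D_n^K)=n\ge 2$ together with the earlier fact that a Noetherian reciprocal complement has Krull dimension at most $1$.

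I would first reduce to $n=2$. Localizing $R(D_n^K)$ at the prime $\p_{x_3\cdots x_n}$ produces, by Lemma~\ref{lem:localizations} (and the remark following Definition/Proposition~\ref{pr:pxexists}), the ring $R\bigl(K[x_1,x_2,x_3^{\pm 1},\dots,x_n^{\pm 1}]\bigr)$. This Laurent extension is graded by monomial degree in $x_1,x_2$ over a totally ordered value monoid, with degree‑zero part $K'=K[x_3^{\pm 1},\dots,x_n^{\pm 1}]$; by Proposition~\ref{pr:gradedarenotegyptian} its positive‑degree elements are non‑Egyptian, while every nonzero Laurent monomial in $x_3,\dots,x_n$ is the reciprocal of another one and hence Egyptian, so its ring of Egyptian elements is $(K')^{\circ}$. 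Inverting the Egyptian elements therefore gives $R\bigl(K[x_1,x_2,x_3^{\pm 1},\dots,x_n^{\pm 1}]\bigr)=R\bigl(F[x_1,x_2]\bigr)$ with $F=\Frac K'=K(x_3,\dots,x_n)$ a field. Since localizations of coherent rings are coherent, it suffices to prove that $R(K[x,y])$ is not coherent for an arbitrary field $K$.

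Now set $R=R(K[x,y])$ and take $\mathfrak a=(1/x,\,1/y)$; by the above I must show $J:=(1/x)R\cap(1/y)R$ is not finitely generated. For each $k\ge 1$ the polynomial $x^k-y$ is irreducible, and
\[
z_k:=\frac{1}{y\,(x^k-y)}\ \in\ J,
\]
because $y\,z_k=\dfrac1{x^k-y}\in R$ and, by partial fractions in $y$, $x\,z_k=\dfrac1{x^{k-1}}\cdot\dfrac1y+\dfrac1{x^{k-1}}\cdot\dfrac1{x^k-y}\in R$. Moreover $z_k\notin (1/xy)R$: writing $u=x^k-y$, the localization $R_{\p_{x^k-y}}=R\bigl(K[x,y][(x^k-y)^{-1}]\bigr)$ is, by the same Laurent‑grading computation, isomorphic to the DVR $K(u)[x^{-1}]_{(x^{-1})}$, and there $xy\,z_k=x/(x^k-y)=x/u$ has value $-1$, so $xy\,z_k\notin R\subseteq R_{\p_{x^k-y}}$. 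Thus the $z_k$ lie in $J$, outside the principal ideal $(1/xy)R$, and carry pairwise non‑associate ``fresh'' irreducible denominators.

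Finally, one must show no finite subset $\beta_1,\dots,\beta_N$ of $J$ generates $J$. Such $\beta_i$ are finite sums of reciprocals of polynomials, so only finitely many irreducible polynomials appear among their denominators; choose $k$ with $x^k-y$ not among them, and suppose $z_k=\sum_i c_i\beta_i$. Every $\eta\in J$ has value $\ge m$ in the DVR $R_{\p_{x^m-y}}$ (since $y\eta\in R$ and $y$ has value $-m$ there), while $z_k$ has value exactly $k$ in $R_{\p_{x^k-y}}$ and every $z_j$ with $j\ne k$ has strictly larger value there; comparing the residue‑field leading coefficients of the two sides of $z_k=\sum_i c_i\beta_i$ in $R_{\p_{x^k-y}}$, and using Theorems~\ref{thm:islocal} and \ref{thm:modp} together with the localization formula $R_{\p_u}=R(K[x,y][u^{-1}])$ to pin down what leading coefficients the $\beta_i$ can contribute, should yield a contradiction. (A clean alternative is to bypass explicit generators and show that the ``cotangent space'' $J/\m J$ is infinite‑dimensional over $K=R/\m$, which again forces $J$, hence $\mathfrak a$, not to be finitely presented.) This reconciliation of the two sides is the main obstacle; everything around it is formal. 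With it in hand, $\mathfrak a$ is a finitely generated ideal that is not finitely presented, so $R(K[x,y])$ — and therefore $R(D_n^K)$ for every $n\ge 2$ — is not coherent, and a fortiori not Noetherian.
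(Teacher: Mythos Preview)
Your overall strategy coincides with the paper's: show that $(1/x_1)R\cap(1/x_2)R$ is not finitely generated, which is exactly the content of \cite[Theorem~5.6]{nmeGuLo-poly} as noted in the text.  Your reduction to $n=2$ is correct (and can be shortened by invoking Theorem~\ref{thm:heightofprimes} directly, which already gives $R(D_n^K)_{\p_{x_3\cdots x_n}}\cong R(D_2^{K(x_3,\dots,x_n)})$), and your verification that each $z_k=\frac{1}{y(x^k-y)}$ lies in $J=(1/x)R\cap(1/y)R$ but not in $(1/xy)R$ is fine.

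The genuine gap is exactly where you flag it: the step ``comparing the residue-field leading coefficients \dots\ should yield a contradiction''.  As written, the valuation argument does not close.  In $V_k=R_{\p_{x^k-y}}$ every element of $J$ has value $\geq k$, so the hypothetical generators $\beta_i$ also have value $\geq k$, and hence $v_k\bigl(\sum c_i\beta_i\bigr)\geq k=v_k(z_k)$ with no contradiction from values alone.  Passing to residues mod $\pi^{k+1}$ does not obviously help either: the residue of $x^k z_k$ in $K(u)$ is $1/u$, which \emph{does} lie in the image $R/\p_{x^k-y}\cong K[u^{-1}]_{(u^{-1})}$ of $R$, and the residues $\bar c_i$ of the coefficients $c_i\in R$ can be arbitrary elements of that DVR, not merely constants in $K$.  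So the ``leading coefficient'' comparison you propose has both sides landing in the same subring of $K(u)$, and the condition that $x^k-y$ is absent from the denominators of the $\beta_i$ has not yet been exploited in a way that forces the right-hand residue to miss $1/u$.  Something further is needed---for instance a finer bookkeeping of how $\deg_y$ of numerator and denominator of $\beta_i$ controls both $v_k(\beta_i)$ \emph{and} the shape of the residue simultaneously, or a direct argument that the images of the $z_k$ in $J/\m J$ are $K$-linearly independent.  Until one of these is carried out, the proof is incomplete, and you have correctly identified this as ``the main obstacle''.
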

In particular, we show \cite[Theorem 5.6]{nmeGuLo-poly}  that $(1/x_1)R(D_n^K) \cap (1/x_2)R(D_n^K)$ is not a finitely generated ideal.

It is instructive to compare the last two results with Theorem~\ref{thm:Euclidean}, as it highlights a strict dichotomy in behavior between the $n=1$ and $n\geq 2$ cases.  When $n=1$, $R(D_n^K)$ is a DVR, hence a Noetherian, integrally closed, and a UFD.  But when $n\geq 2$, $R(D_n^K)$ satisfies none of these conditions.

An interesting feature of reciprocal complements is that localizing at single elements frequently has the same effect as localizing at prime ideals. In the case of polynomial rings, we have the following:

\begin{thm}[{\cite[Lemma 4.1, Proposition 4.2,  Lemma 4.3]{nmeGuLo-poly}}]
\label{thm:heightofprimes}
Let $[n] := \{1, \ldots, n\}$. For each subset $J$ of $[n]$, set $D_J^K := K[\{x_i \mid i \in J\}]$; thus, $D_n^K = D_{[n]}^K$.  Set $\p_J := \p_{\prod_{i \in J} x_i} = $ the unique prime ideal of $R(D_{n}^K)$ maximal with respect to avoiding the element $\prod_{i\in J} x_i$, as in Proposition~\ref{pr:pxexists}.  Then \[
R(D_{n}^K)\left[\prod_{i\in J} x_i\right] = R(D_{n}^K)_{\p_J} = R\left(D_{[n]\setminus J}^{K(\{x_i \mid i\in J\})}\right).
\]
In particular, \begin{enumerate} \item $R(D_{n}^K)[\prod_{i=1}^n x_i] = K(x_1, \ldots, x_n) = \Frac D_{n}^K$,
\item For any subset $J \subset [n]$, $\hgt \p_J = n-\#(J)$, and
\item For any two subsets $J, J' \subseteq [n]$, we have $J \subseteq J'$ if and only if $\p_{J'} \subseteq \p_J$.  Hence, whenever $J \neq J'$, we have $\p_J \neq \p_{J'}$.
\end{enumerate}
\end{thm}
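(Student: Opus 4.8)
The displayed chain of equalities in the statement has two halves. The first equality, $R(D_n^K)[\prod_{i\in J}x_i]=R(D_n^K)_{\p_J}$, is precisely Definition/Proposition~\ref{pr:pxexists} and the observation following it, taken with $x=\prod_{i\in J}x_i$. So the plan is to establish the identity
\[
R(D_n^K)\left[\prod_{i\in J}x_i\right]=R\left(D_{[n]\setminus J}^{K(\{x_i\mid i\in J\})}\right)
\]
and then read off items (1)--(3). Relabel the variables so that $J=\{1,\dots,m\}$; since each $1/x_i$ lies in $R(D_n^K)$, the product $\prod_{i\in J}x_i$ and the individual $x_1,\dots,x_m$ generate the same subring over $R(D_n^K)$ inside $\Frac D_n^K$, so $R(D_n^K)[\prod_{i\in J}x_i]=R(D_n^K)[x_1,\dots,x_m]$, and it suffices to compute this ring.

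I would compute $R(D_n^K)[x_1,\dots,x_m]$ by adjoining the variables one at a time. Put $L_k:=K(x_1,\dots,x_k)$, so $L_0=K$, and claim $R(D_n^K)[x_1,\dots,x_k]=R(L_k[x_{k+1},\dots,x_n])$ for all $0\le k\le m$. The case $k=0$ is trivial, and the case $k=m$, combined with the previous paragraph, is the identity above, since $L_m=K(\{x_i\mid i\in J\})$ and $\{m+1,\dots,n\}=[n]\setminus J$. For the inductive step, the inductive hypothesis gives $R(D_n^K)[x_1,\dots,x_{k+1}]=R(L_k[x_{k+1},\dots,x_n])[x_{k+1}]$; the ring $L_k[x_{k+1},\dots,x_n]$ is positively graded with degree-zero part the field $L_k$, hence is E-simple with maximal subfield $L_k$ by Proposition~\ref{pr:gradedarenotegyptian}, so the special case of Lemma~\ref{lem:localizations} (applied with ground field $L_k$ and variable $x_{k+1}$) rewrites the right-hand side as $R(L_k(x_{k+1})[x_{k+2},\dots,x_n])=R(L_{k+1}[x_{k+2},\dots,x_n])$, which completes the induction. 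This is where I expect the genuine content to reside: the only nontrivial external ingredient is Lemma~\ref{lem:localizations}, which at the level of reciprocal complements trades an inverted variable for an extension of the ground field; what needs care is tracking the growing field $L_k$ and checking that E-simplicity, with the expected maximal subfield, is inherited at every stage.

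Granting the identity, the three items follow. Item (1) is the case $J=[n]$, where the right-hand side is the reciprocal complement of the empty-variable polynomial ring over $K(x_1,\dots,x_n)$ --- that is, the field $K(x_1,\dots,x_n)=\Frac D_n^K$ itself. For item (2), $\hgt\p_J=\dim R(D_n^K)_{\p_J}=\dim R(D_{[n]\setminus J}^{K(\{x_i\mid i\in J\})})$, and the last quantity is the dimension of the reciprocal complement of a polynomial ring in $n-\#(J)$ variables over a field, which is $n-\#(J)$ by Theorem~\ref{dimn}. For item (3), use the elementary fact that for primes $\p,\q$ of a domain $R$ one has $R_\p\subseteq R_\q$ (inside $\Frac R$) if and only if $\q\subseteq\p$; with $R(D_n^K)_{\p_J}=R(D_n^K)[\prod_{i\in J}x_i]$ this turns (3) into the statement that $R(D_n^K)[\prod_{i\in J}x_i]\subseteq R(D_n^K)[\prod_{i\in J'}x_i]$ if and only if $J\subseteq J'$. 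One direction is immediate: if $J\subseteq J'$ then $\prod_{i\in J}x_i=(\prod_{i\in J'}x_i)\prod_{i\in J'\setminus J}(1/x_i)$, and each $1/x_i\in R(D_n^K)$. For the other, suppose the containment holds and $i\in J$; then $x_i=(\prod_{k\in J}x_k)\prod_{k\in J\setminus\{i\}}(1/x_k)\in R(D_n^K)[\prod_{i\in J'}x_i]=R(D_{[n]\setminus J'}^{L})$ with $L=K(\{x_k\mid k\in J'\})$, and if $i\notin J'$ this would make the positive-degree element $x_i$ of $D_{[n]\setminus J'}^{L}$ Egyptian, against Proposition~\ref{pr:gradedarenotegyptian}; so $i\in J'$, whence $J\subseteq J'$. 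In particular $\p_J=\p_{J'}$ implies $J\subseteq J'\subseteq J$, giving the last assertion.
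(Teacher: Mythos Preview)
Your argument is correct. The survey does not itself supply a proof of this result---it quotes it from \cite{nmeGuLo-poly}---so there is no in-paper proof to compare against; the inductive use of Lemma~\ref{lem:localizations} to trade each inverted variable for an enlarged ground field is precisely the mechanism suggested by the remark following the theorem (that adjoining $x_i$ amounts to inverting the element $1/x_i\in R$).

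One small caveat worth recording: your derivation of item~(2) invokes Theorem~\ref{dimn}, which in the original source is Theorem~4.4, numbered \emph{after} the present results (Lemmas~4.1--4.3) and proved using them. In a self-contained development one would therefore first establish the displayed identity and item~(3), use the resulting strict chain $\p_{[n]}\subsetneq\p_{[n-1]}\subsetneq\cdots\subsetneq\p_\emptyset$ together with the upper bound $\dim R(D_n^K)\le n$ (e.g.\ from \cite[Theorem~3.2]{Gu-recomp}, as noted after Conjecture~\ref{con:dim}) to obtain Theorem~\ref{dimn}, and only then read off the heights. In the survey's ordering Theorem~\ref{dimn} is stated first, so your appeal to it is formally licit here.
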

In the above, it is important to remember that $1/x_i \in R=R(D_n^K)$, so adjoining $x_i$ is inverting an element in $R$.

Here is an exotic property of $R(D_2^K)$.
\begin{thm}[{\cite[Theorem 7.5]{nmeGuLo-poly}}]
Any finitely generated ideal in $R(D_2^K)$ is in all but finitely many prime ideals.
\end{thm}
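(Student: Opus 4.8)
The plan is to exploit how few prime ideals $R := R(D_2^K)$ has. It is an integral domain (a subring of $\Frac D_2^K$), local by Theorem~\ref{thm:islocal}, with $\dim R = 2$ by Theorem~\ref{dimn}; hence every prime of $R$ is either $(0)$, the maximal ideal $\m$, or of height one (a height-two prime strictly contained in $\m$ would force a chain of four primes). Since the unit ideal is contained in no prime, I read the assertion for proper finitely generated ideals $\mathfrak{a}$. For such an $\mathfrak{a}$, a prime $\p$ contains $\mathfrak{a}$ iff it contains each of finitely many generators; a nonzero generator is automatically a nonunit (else $\mathfrak{a} = R$), a zero generator lies in every prime, and being in ``all but finitely many primes'' is closed under finite intersection. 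So the first move is to reduce to proving: \emph{every nonzero nonunit $\alpha \in R$ lies in all but finitely many primes}. As $\alpha \in \m$ (a nonunit of a local ring) and $\alpha \notin (0)$, this amounts to bounding the set of height-one primes that avoid $\alpha$.

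Next I would fix a representation $\alpha = \sum_{i=1}^n 1/d_i$, $d_i \in D^\circ$, using the fewest terms. By Proposition~\ref{pr:decomposeelement} (the reverse implication being trivial), a prime $\p$ contains $\alpha$ iff it contains every $1/d_i$; hence $\alpha \notin \p$ iff $1/d_i \notin \p$ for some $i$. Two observations then do the work. First, each $d_i$ must be a nonconstant polynomial: if some $d_i \in K^\times$, then $1/d_i$ would be a unit of $R$, lying in no prime, so $\alpha$ would lie in no prime, contradicting $\alpha \in \m$. Second, since $D_2^K$ is $\Z_{\geq 0}$-graded with degree-zero piece the field $K$, it is E-simple (Proposition~\ref{pr:gradedarenotegyptian}), so a nonconstant $d_i$ is not Egyptian; thus $1/d_i$ is a nonunit, i.e.\ $1/d_i \in \m$, so $\m$ does not avoid $1/d_i$, whence $\p_{d_i} \neq \m$ (notation of Definition/Proposition~\ref{pr:pxexists}). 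Because $R$ is a two-dimensional local domain, $\p_{d_i} \neq \m$ forces $\hgt \p_{d_i} \leq 1$.

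Then I would set up the dictionary between membership in a prime and containment of primes. For nonzero $d \in D$, the remark following Definition/Proposition~\ref{pr:pxexists} gives $R_{\p_d} = R[d] = R(D[d^{-1}])$ (via Lemma~\ref{lem:localizations}); since adjoining $d = (1/d)^{-1}$ to $R$ is the same as inverting the element $1/d \in R$, this identifies $R_{\p_d}$ with the localization of $R$ at $1/d$, and comparing spectra shows that for every prime $\p$ of $R$ one has $\p \subseteq \p_d$ iff $1/d \notin \p$. Now take a height-one prime $\p$ with $\alpha \notin \p$: then $1/d_i \notin \p$ for some $i$, i.e.\ $\p \subseteq \p_{d_i}$, and since $\hgt \p = 1$ while $\hgt \p_{d_i} \leq 1$ this forces $\p = \p_{d_i}$. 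Hence the height-one primes avoiding $\alpha$ all lie in $\{\p_{d_1}, \dots, \p_{d_n}\}$, a finite set; combined with the first paragraph, $\alpha$ lies in all but finitely many primes of $R$, and applying this to a finite generating set of $\mathfrak{a}$ would finish the proof.

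The main obstacle I anticipate is the height estimate $\hgt \p_{d_i} \leq 1$: it is exactly what keeps the exceptional set finite, and it relies on both the E-simplicity of $K[x,y]$ (so that $1/d_i$ is never a unit, i.e.\ $\p_{d_i} \neq \m$) and on $R$ being finite-dimensional. The auxiliary equivalence $\p \subseteq \p_d \Leftrightarrow 1/d \notin \p$ also needs a small argument, but it drops out cleanly from the localization identity $R_{\p_d} = R[d]$. Beyond these two points the argument is just bookkeeping around Proposition~\ref{pr:decomposeelement}, with no involved computation.
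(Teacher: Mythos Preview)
The present paper is a survey and does not supply a proof of this theorem; it merely cites \cite[Theorem 7.5]{nmeGuLo-poly}. So there is no in-paper argument to compare against.

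That said, your argument is correct and self-contained relative to the results collected in this survey. The reduction to a single nonzero nonunit $\alpha$ is clean; the use of Proposition~\ref{pr:decomposeelement} on a minimal representation $\alpha=\sum_{i=1}^n 1/d_i$ gives the equivalence $\alpha\in\p \Leftrightarrow 1/d_i\in\p$ for all $i$; E-simplicity of $K[x,y]$ forces each $d_i\notin K$, hence $1/d_i\in\m$ and $\p_{d_i}\neq\m$; and the identity $R_{\p_{d_i}}=R[d_i]$ (stated right after Definition/Proposition~\ref{pr:pxexists}) yields $\p\subseteq\p_{d_i}\Leftrightarrow 1/d_i\notin\p$. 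Since $\dim R=2$ and $R$ is a local domain, $\hgt\p_{d_i}\leq 1$, so any height-one $\p$ avoiding $\alpha$ must equal some $\p_{d_i}$. Nothing is missing; the ``obstacle'' you flag (the height bound on $\p_{d_i}$) is exactly disposed of by $\p_{d_i}\neq\m$ together with $\dim R=2$.
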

This is in some sense opposite to how primes behave in a Noetherian ring.  There, any ideal has only finitely many primes minimal over it.  Here, any finitely generated nonzero ideal has \emph{almost all} primes minimal over it.

\subsection{Reciprocal complements of curve singularities}

After understanding the main properties of the reciprocal complements of polynomial rings over a field, a natural next question is to ask how this construction behaves on finitely generated algebras over fields. An interesting answer to this question has been given by Dario Spirito in \cite{Spi-recocurve} in the case of integral domains of Krull dimension one that define curve singularities.

The methods in his work explore a surprising relation between valuation overrings of the reciprocal complement and points in the projective closure of a curve. Let $D$ be a one-dimensional domain finitely generated as a $k$-algebra, where $k$ is an algebraically closed field. A \emph{realization} $X$ of $D$ is an irreducible curve in an affine space $\mathbb{A}_k^n$ such that $D$ is isomorphic to the coordinate ring of $X$.

The projective closure of $X$ in $\mathbb{P}_k^n$ will be denoted by $\overline{X}$. The variety $X$ is \emph{regular at infinity} if all the points of $\overline{X} \setminus X$ are regular. Then:

\begin{thm}[{\cite[Theorem 2.1]{Spi-recocurve}}]
\label{thm:dario}
The following conditions are equivalent:
\begin{enumerate}
\item[(1)] $R(D)$ is not a field ($D$ is not Egyptian). 
\item[(2)] If $X$ is a realization of $D$ that is  regular at infinity, then $|\overline{X} \setminus X| = 1$.
\item[(3)] If $X$ is any realization of $D$ and $\nu : Y \to X$ is a normalization
of $X$, then $|\nu^{-1}(\overline{X} \setminus X)| = 1$.
\end{enumerate}
If these equivalent conditions are satisfied, $R(D)$ is a domain of Krull dimension one whose  integral closure is a DVR (corresponding to the localization of the coordinate ring of $\overline{X}$ at the unique point of $\overline{X} \setminus X$). 
\end{thm}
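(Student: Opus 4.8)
The plan is to work entirely with valuation overrings of $R(D)$ inside $F := \Frac D$. First note that $R(D)$ is a field if and only if $R(D) = F$: if $R(D)$ is a field it contains $d = (1/d)^{-1}$ for every $d \in D^\circ$, hence contains $D$, hence all of $F$. And $R(D) = F$ if and only if $R(D)$ has no nontrivial valuation overring, the nontrivial implication being that if $R(D) \neq F$ then some $b \in R(D)^\circ$ has $1/b \notin R(D)$, so $1/b$ is not integral over $R(D)$ (else clearing denominators would put it in $R(D)$), whence the Chevalley extension theorem produces a valuation ring $V$ with $R(D) \subseteq V \subsetneq F$. Since $R(D) \supseteq k$, such a $V$ equals $\cO_v$ for a nontrivial place $v$ of $F/k$, i.e.\ the local ring of a closed point of the smooth projective model $\overline Y$ of $F/k$; and $R(D) \subseteq \cO_v$ iff each generator $1/d$ lies in $\cO_v$, i.e.\ iff $v(d) \le 0$ for every $d \in D^\circ$. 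So \emph{condition (1) is equivalent to the existence of a nontrivial place $v$ of $F/k$ with $v(d) \le 0$ for all $d \in D^\circ$.}

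Next I would set up the geometry. Let $\widetilde D$ be the integral closure of $D$, a Dedekind domain with $\Frac \widetilde D = F$, let $Y = \Spec \widetilde D$, and put $S := \{v : v(d) < 0 \text{ for some } d \in D^\circ\}$. Since $\cO_v$ is integrally closed, $v \ge 0$ on $D$ iff $v \ge 0$ on $\widetilde D$, so $S$ is exactly the (finite, nonempty) set $\overline Y \setminus Y$ of points at infinity of the normalization; it depends only on $D$, and for a realization $X$ with normalization $\nu : Y \to X$ it has cardinality $|\nu^{-1}(\overline X \setminus X)|$, the quantity in (3). The key elementary fact is that every $d \in D^\circ$ has all its poles inside $S$, so, a principal divisor on $\overline Y$ having degree $0$, $\sum_{v \in S} v(d) \le 0$.

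I would then prove (1) $\iff |S| = 1$. If $S = \{v_0\}$, the last inequality says $v_0(d) \le 0$ for all $d \in D^\circ$, so $R(D) \subseteq \cO_{v_0} \subsetneq F$ and (1) holds. Conversely assume $|S| \ge 2$; I claim that for \emph{every} nontrivial place $v$ there is $d \in D^\circ$ with $v(d) > 0$, so $R(D)$ has no nontrivial valuation overring and $R(D) = F$. Let $\mathfrak c = (D :_{\widetilde D} \widetilde D) = \mathfrak p_1^{c_1} \cdots \mathfrak p_r^{c_r}$ be the nonzero conductor, with $\mathfrak p_l$ corresponding to points $P_l \in Y$, so that any $d \in \widetilde D$ with $v_{P_l}(d) \ge c_l$ for all $l$ lies in $\mathfrak c$, and $\mathfrak c \widetilde D \subseteq D$ gives $\mathfrak c \subseteq D$. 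Given $v$: if $v \notin S$ let $P$ be its center on $Y$ and use the divisor $N \sum_{w \in S} w - P - \sum_l c_l P_l$; if $v = v_0 \in S$ take $P = v_0$ and use $N \sum_{w \in S \setminus \{v_0\}} w - P - \sum_l c_l P_l$, whose degree still tends to $+\infty$ precisely because $|S| \ge 2$. For $N \gg 0$, Riemann--Roch gives a nonzero $d$ in the corresponding linear system; the coefficient of the divisor at $P$ is $\le -1$ in any case, so $d \in D^\circ$, $d$ has poles only in $S$, and $v(d) = v_P(d) > 0$. With $|S| \ge 1$ always, this yields \textbf{(1) $\iff |S| = 1 \iff$ (3)}. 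Manufacturing, from Riemann--Roch and the conductor, an element of $D$ itself — not just of $\widetilde D$ — with the required vanishing is the technical heart of the argument, and the step I expect to be the main obstacle.

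For (2): if a realization $X$ is regular at infinity, then the normalization $\overline Y \to \overline X$ is an isomorphism over the (smooth) set $\overline X \setminus X$, so $|\overline X \setminus X| = |\overline Y \setminus Y| = |S|$; since realizations regular at infinity always exist (choose enough generators of $D$ to make the finitely many points at infinity of $\overline X$ smooth — a finite list of conditions, each met by adjoining a suitable element of $D$), (2) too is equivalent to $|S| = 1$. Finally, assume the equivalent conditions, so $S = \{v_0\}$. Then the only valuation overrings of $R(D)$ are $\cO_{v_0}$ and $F$ (for $v \neq v_0$ the case $v \notin S$ of the previous paragraph applies), so the integral closure of $R(D)$ is $\cO_{v_0}$, a DVR (and, for a realization regular at infinity, it is the local ring of $\overline X$ at its unique point at infinity). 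Since an integral ring extension preserves Krull dimension and $R(D) \neq F$, we get $\dim R(D) = \dim \cO_{v_0} = 1$. For Noetherianity, a further Riemann--Roch estimate gives $\m_{v_0}^N \subseteq R(D)$ for $N \gg 0$; then, with $\pi$ a uniformizer and using $k \subseteq R(D)$, one has $\cO_{v_0} = \sum_{i=0}^{N-1} R(D)\,\pi^i$, so $\cO_{v_0}$ is a finite $R(D)$-module and $R(D)$ is Noetherian by Eakin--Nagata.
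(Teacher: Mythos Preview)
The paper does not prove this theorem; it is quoted verbatim from Spirito's article \cite{Spi-recocurve} as part of the survey portion, so there is no in-paper argument to compare against. Your approach---reformulating (1) as the existence of a nontrivial valuation overring of $R(D)$, identifying such overrings with points of the smooth projective model $\overline Y$, and then using Riemann--Roch together with the conductor $\mathfrak c=(D:\widetilde D)$ to manufacture elements of $D$ itself with a prescribed zero---is a clean and correct route to the equivalence $(1)\Leftrightarrow(3)$ and to the identification of the integral closure of $R(D)$ with $\cO_{v_0}$. The reduction of (2) to $|S|=1$ via ``regular at infinity implies $\overline Y\to\overline X$ is an isomorphism over $\overline X\setminus X$'' is also fine; the existence of a realization regular at infinity is standard but you should expect to justify it rather than gesture at it.

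The genuine gap is the Noetherianity step. You assert that ``a further Riemann--Roch estimate gives $\m_{v_0}^N\subseteq R(D)$ for $N\gg0$,'' but this is not a statement about linear systems: it says that \emph{every} $f\in F$ with $v_0(f)\ge N$ can be written as a finite sum $\sum 1/d_i$ with $d_i\in D$. Riemann--Roch controls dimensions of spaces $L(E)$; it does not, by itself, produce unit-fraction decompositions. Already for $\widetilde D$ the coordinate ring of an affine elliptic curve (genus $1$, one point at infinity), the value semigroup of $R(\widetilde D)$ is the Weierstrass semigroup $\{0,2,3,\ldots\}$, and showing that $\m_{v_0}^2\subseteq R(\widetilde D)$---i.e.\ that \emph{all} elements of valuation $\ge 2$, with arbitrary pole configurations on $Y$, are sums of reciprocals---requires real work (a naive ``subtract $1/d$ to kill a pole'' induction fails because $1/d$ typically introduces new poles at the other zeros of $d$). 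This is exactly the content that makes the last clause of the theorem nontrivial, and in Spirito's treatment it is tied to the description of the value semigroup alluded to just after the theorem statement. Your Eakin--Nagata deduction from $\m_{v_0}^N\subseteq R(D)$ is correct once that inclusion is established, but the inclusion itself needs an argument beyond ``Riemann--Roch.''
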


In the case where $R(D)$ is not a field in the above context, Spirito describes also the value semigroup of $R(D)$  (with respect to the valuation induced by the integral closure) in connection with the Weierstrass semigroups of the curve $X$.

\subsection{Connections with factroids}
The following is a greatly simplified version of some of the material in \cite{Elnme-factroids}.
\begin{defn}
Let $D$ be a domain.  A \emph{factroid} of $D$ is an additive subgroup $H$ of $D$ that is closed under factors -- i.e., such that whenever $f,g \in D^\circ$ with $fg \in H$, we have $f,g \in H$.
\end{defn}
It is elementary that any intersection of factroids of $D$ is a factroid of $D$. This includes the empty intersection, as $D$ is clearly a factroid of itself.  Hence, we may define the following: \begin{defn}
\label{def:principalfactroid}
For any subset $S$ of an integral domain $D$, the \emph{factroid generated by $S$}, denoted $[S]_D$, is the smallest factroid of $D$ containing $S$ -- i.e., the intersection of all factroids of $D$ that contain $S$.
\end{defn}

We may generate $[S]_D$ iteratively as follows. Let $F_1(S)$ be the set of all sums of factors of elements of $S$.  Then for each $n\geq 1$, define $F_{n+1}(S) := F_1(F_n(S))$.  Then $[S]_D = \bigcup_n F_n(S)$.

\begin{defn}
\label{def:regularfactroid}
A factroid $F$ of $D$ is \emph{regular} if for any nonzero $g\in D$, we have $[gF]_D :_D g = F$.  As the regular factroids of $D$ are closed under intersection, we may define for any subset $S$ of $D$ the \emph{regular factroid generated by $S$}, denoted $G(S)$, as the intersection of all the regular factroids of $D$ that contain $S$.
\end{defn}

Another way to characterize $G(S)$ is as follows: $G(S) = \bigcup _{d \in D^\circ} ([dS]_D : d)$.  When $S= \{x\}$ is a singleton, we denote $[S]_D$ by $[x]_D$, $F_n(S)$ by $F_n(x)$, etc.

Our main interest in factroids is the following:
\begin{thm}[{\cite[from Corollary 7.6]{Elnme-factroids}}]
\label{thm:factroids}
Let $D$ be a domain, $K$ its fraction field, and $\alpha \in K^\times$.  The following are equivalent: \begin{itemize}
    \item $\alpha \in R(D)$.
    \item There exist $a,b\in D^\circ$ with $\alpha=a/b$ and $a\in [b]_D$.
    \item There exist $a,b \in D^\circ$ with $\alpha=a/b$ and $a\in G(b)$.
    \item For any $a,b \in D^\circ$ with $\alpha=a/b$, we have $a\in G(b)$.
    \item For any $a,b\in D^\circ$ with $\alpha=a/b$, there is some $c\in D^\circ$ with $ca \in F_1(cb)$.
\end{itemize}
\end{thm}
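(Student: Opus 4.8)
The plan is to isolate two facts and derive the five-way equivalence from them. \emph{Fact A:} for $x,y\in D^\circ$, if $x\in[y]_D$ then $x/y\in R(D)$. \emph{Fact B:} if $\beta\in R(D)$ with $\beta\neq0$, then for every $a,b\in D^\circ$ with $\beta=a/b$ there exists $c\in D^\circ$ with $ca\in F_1(cb)$ (and hence $ca\in[cb]_D$, since $[cb]_D=\bigcup_n F_n(cb)$).

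To prove Fact A, I would induct on the least $n$ with $x\in F_n(y)$, using the iterative description $[y]_D=\bigcup_n F_n(y)$ and the fact that $R(D)$ is a subring of $K$. For $n=1$, write $x=\sum_i f_i$ with each $f_i$ a factor of $y$, say $y=f_ig_i$ with $g_i\in D^\circ$; then $x/y=\sum_i f_i/y=\sum_i 1/g_i\in R(D)$. For the step, an element of $F_{n+1}(y)=F_1(F_n(y))$ is a sum $\sum_i f_i$ with each $f_i$ a factor of some (necessarily nonzero) $y_i\in F_n(y)$, say $y_i=f_ig_i$ with $g_i\in D^\circ$; by the inductive hypothesis $y_i/y\in R(D)$, so $f_i/y=(1/g_i)(y_i/y)$ lies in the ring $R(D)$, and therefore so does $x/y=\sum_i f_i/y$.

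To prove Fact B, write $\beta=\sum_{j=1}^k 1/d_j$ with $d_j\in D^\circ$ and $k\geq1$ (since $\beta\neq0$). Given $\beta=a/b$, clearing denominators yields $a\prod_j d_j=b\sum_i\prod_{j\neq i}d_j$, so putting $c:=\prod_j d_j\in D^\circ$ we get $ca=b\sum_i\prod_{j\neq i}d_j$, in which each summand $b\prod_{j\neq i}d_j$ is a factor of $cb=b\prod_j d_j$ (with cofactor $d_i$); hence $ca\in F_1(cb)$. With Facts A and B in hand I would close the two cycles $(1)\Rightarrow(5)\Rightarrow(4)\Rightarrow(3)\Rightarrow(1)$ and $(1)\Rightarrow(2)\Rightarrow(1)$: implication $(1)\Rightarrow(5)$ is Fact B; $(1)\Rightarrow(2)$ follows from Fact B via the representation $\alpha=(ca)/(cb)$ with $ca\in[cb]_D$; $(2)\Rightarrow(1)$ is Fact A applied to $a\in[b]_D$; $(3)\Rightarrow(1)$ is Fact A too, since $a\in G(b)=\bigcup_{d\in D^\circ}([db]_D:d)$ gives $da\in[db]_D$ for some $d\in D^\circ$ and $\alpha=(da)/(db)$; $(5)\Rightarrow(4)$ uses the same union description of $G(b)$, as $ca\in F_1(cb)\subseteq[cb]_D$ means $a\in([cb]_D:c)\subseteq G(b)$; and $(4)\Rightarrow(3)$ is immediate because $\alpha\in K^\times$ admits at least one representation $a/b$ with $a,b\in D^\circ$.

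I expect Fact A to be the only part needing genuine care: one must run the induction on the factroid ``level'' $n$ so that it interlocks with closure of $R(D)$ under multiplication, and one must use that the generators of $F_{n+1}(y)$ are factors of \emph{nonzero} elements of $F_n(y)$ (otherwise $[b]_D$ would collapse to all of $D$ and the statement would fail). Fact B and all the implications are then routine manipulations of the fraction $\alpha=a/b$ together with the descriptions of $[\,\cdot\,]_D$ and $G(\cdot)$ recalled in the excerpt.
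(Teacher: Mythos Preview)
Your argument is correct. The paper does not supply its own proof of this theorem; it is quoted from \cite{Elnme-factroids} without argument, so there is nothing in the present paper to compare against. Your two lemmas are exactly the right decomposition: Fact~A (if $x\in[y]_D$ then $x/y\in R(D)$, by induction on the level $n$ with $x\in F_n(y)$) and Fact~B (clearing denominators in a unit-fraction expansion to land in $F_1(cb)$) together with the description $G(b)=\bigcup_{d\in D^\circ}([db]_D:d)$ close both cycles cleanly. Your caution about taking factors only of \emph{nonzero} elements of $F_n(y)$ is the one genuinely delicate point, and you handle it correctly: by the paper's convention a ``factor'' means $f\in D^\circ$ with $fg\in S$ for some $g\in D^\circ$, forcing $fg\neq 0$, so the $y_i$ in your inductive step are automatically nonzero and the induction hypothesis applies.
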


\section{Some new results}\label{sec:new}

\subsection{Connections with unit-additivity}

In this subsection we describe a connection between $E$-simple domains and unit-additive domains (see Definition \ref{def:unitadditive}). 

\begin{prop}
An integral domain $D$ is unit-additive if and only if for any $d, e, f\in D^\circ$ with $d=\frac 1e + \frac 1f$, $d$ is a unit.  In particular, any E-simple domain is unit-additive.
\end{prop}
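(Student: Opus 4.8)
The plan is to prove the stated equivalence directly and then read off the ``in particular'' clause. Since $D$ is a domain, its only nilpotent element is $0$, so Definition~\ref{def:unitadditive} says that $D$ is unit-additive exactly when, for all $u,v\in D^\times$, the sum $u+v$ is either a unit of $D$ or $0$; I would use this reformulation throughout.

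For the direction ``$\Leftarrow$'' I would argue as follows. Given $u,v\in D^\times$ with $u+v\neq 0$, set $e:=u^{-1}$, $f:=v^{-1}$, and $d:=u+v$; all three lie in $D^\circ$, and $d=\frac 1e+\frac 1f$, so the hypothesis forces $d=u+v$ to be a unit. Hence $D$ is unit-additive.

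For ``$\Rightarrow$'', assume $D$ is unit-additive and take $d,e,f\in D^\circ$ with $d=\frac 1e+\frac 1f$. Clearing denominators in $\Frac D$ gives $def=e+f$, which I would rearrange into the two identities $e(df-1)=f$ and $f(de-1)=e$. The first shows $f/e=df-1\in D$ and the second shows $e/f=de-1\in D$, so $w:=e/f$ is a unit of $D$ with inverse $df-1$, and therefore $df=1+w^{-1}$ is a sum of two units of $D$. Unit-additivity then forces $df$ to be a unit or $0$; it cannot be $0$ since $d,f\in D^\circ$ and $D$ is a domain, so $df\in D^\times$, whence $d\in D^\times$. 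This establishes the equivalence.

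For the closing sentence, suppose $D$ is E-simple and $d=\frac 1e+\frac 1f\in D^\circ$ with $e,f\in D^\circ$. Then $d$ is visibly a finite sum of reciprocals of nonzero elements of $D$, i.e.\ an Egyptian element of $D$, hence a unit by E-simplicity; so the criterion just proved gives that $D$ is unit-additive. I do not expect a genuine obstacle in any of this: the only place needing a little care is the step from the fraction-field equation $def=e+f$ to the facts that $df-1$ and $de-1$ lie in $D$ and are mutually inverse, together with ruling out $df=0$, and both are routine in a domain.
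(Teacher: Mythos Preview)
Your argument is correct and follows essentially the same route as the paper: both directions match, and in the ``$\Rightarrow$'' step your identities $e(df-1)=f$ and $f(de-1)=e$ amount exactly to the paper's observation that $(de-1)(df-1)=\frac{e}{f}\cdot\frac{f}{e}=1$, after which one applies unit-additivity to $1+(df-1)$ (equivalently, the paper's $1+(de-1)$). The E-simple clause is handled identically.
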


\begin{proof}
Suppose $D$ is unit-additive.  Let $d,e,f$ be as in the statement.  Then $(de-1)(df-1)= \frac ef \cdot \frac fe=1$, so $de-1$ is a unit.  Since $D$ is unit-additive and $de\neq 0$, we have that $de=(de-1)+1$ is a unit, whence $d$ is a unit.

Conversely, suppose the implication above holds.  Let $u,v$ be units of $D$ such that $u+v\neq 0$.      Then $u+v = \frac 1{u^{-1}}+ \frac 1{v^{-1}}$, whence $u+v$ is a unit by assumption.  Thus, $D$ is unit-additive.

Finally, any $d$ as in the statement of the theorem is clearly an Egyptian element of $D$. Hence if $D$ is E-simple, any such $d$ must be a unit.
\end{proof}

However, E-simplicity is not \emph{equivalent} to unit-additivity, as the following example shows.  In the following, Justin Chen showed that $S^\times=D^\times$, and Will Sawin proved that $D^\times = k^\times$.

\begin{example}[{\cite{Ch-pers25, Saw-MOthisring}}]\label{ex:ChenSawin}
Let $D:=k[x,y,z]/(xyz-xy-xz-yz)$, where $k$ is any field. This is an integral domain because $xyz-xy-xz-yz$ is an irreducible polynomial.  Then $D$ is a subring of  $A=k[x, \frac 1x, y, \frac 1y, z, \frac 1z]/(\frac 1x + \frac 1y + \frac 1z -1)$.  Applying the change of variables $a=1/x$, $b=1/y$, $c=1/z$, we have $A = k[a, \frac 1a, b, \frac 1b, c, \frac 1c] / (a+b+c-1) \cong k[a\frac 1a,b, \frac 1b,\frac 1{1-a-b}]$, where $c$ maps to $1-a-b$.  Let $B$ be the subring $k[a, \frac 1a, b, \frac 1b]$ of this representation of $A$.  The group $B^\times$ of units of $B$ is $\bigcup_{m,n \in \Z} k^\times a^m b^n$. Since $c=1-a-b$, we have $A=B[1/c]$. 

We claim that $A^\times = \bigcup_{m,n,p \in \Z} k^\times a^m b^n c^p$.  Clearly $c=1-a-b$ is an irreducible element of $B$. Let $\gamma \in A^\times$.  Then there is some $\delta \in A$ such that $\gamma \delta =1$.  Write $\gamma=\frac g{c^s}$, $\delta = \frac h{c^t}$, with $g,h \in B$ and $s,t \in \N_0$.  Then $gh=d^{s+t}$ in $B$, so by irreducibility, there exist $G,H \in B$ and nonnegative integers $i,j$ with $i+j=s+t$ and $g=d^iG$, $h=d^jH$.  Thus, $GH=1$, so that since $G,H \in B$, we have $G = \lambda a^m b^n$ for some $\lambda \in k^\times$ and $m,n \in \Z$.  Thus, $\gamma = \lambda a^m b^n c^{i-s}$, finishing the proof of the claim.

Tracing back through the isomorphisms, it then follows that the units of $A$ are all of the form $\lambda x^m y^n z^p$. So it remains to show that the only such elements in $D$ have $m=n=p=0$.  The inclusion $D \hookrightarrow A$ induces a map $\phi: D/(z-1)D \ra A/(z-1)A$.  But after the identification $z \mapsto 1$ and hence $y \mapsto -x$ in both rings, $\phi$ reduces to the inclusion of $k[x]$ into $k[x^{\pm 1}]$; hence it is injective.  Now, let $\lambda x^m y^n z^p$ be a unit in $D$.  Then its image in $D/(z-1)D \cong k[x]$ is $(-1)^n\lambda x^{m+n}$ but must be a unit in $k[x]$; thus $m=-n$.  By symmetry, we must also have $n=-p$ and $m=-p$.  Hence $m=-m$, whence $m=0$, and by symmetry also $n=p=0$.  Hence, $D^\times = k^\times$.

Now, let $S = k[r,s,t,\frac 1r + \frac 1s + \frac 1t] \cong k[r,s,t,u] / (rstu-rs-rt-st)$.  This is a graded ring, generated over $k$ by homogeneous elements $r,s,t$ of degree 1 and $u$ of degree $-1$.  Since $S$ is a subring of $k[r, \frac 1r, s, \frac 1s, t, \frac 1t]$, whose units are $\lambda r^m s^n t^p$ for $\lambda \in k^\times$ and $m,n,p \in \Z$, $u$ is not a unit.  But by \cite[Proposition 4.10(b)]{BrGu-polybook}, any unit of $S$ must be homogeneous.  The homogeneous elements of negative degree all look like $\lambda u^s$ for $s\in \N_0$, so they are multiples of $u$, and hence cannot be units.  If there were a homogeneous unit of positive degree, its inverse would have negative degree, hence there are none.  Thus, $S^\times = (S_0)^\times$, where $S_0$ is the degree $0$ subring of $S$. $S_0$ is generated as a $k$-algebra by $ru$, $su$, and $tu$. Thus, the $k$-algebra homomorphism $\phi: k[x,y,z] \rightarrow S$ given by $x \mapsto ru$, $y \mapsto su$, $z\mapsto tu$ has image $S_0$.  One sees easily that the kernel of $\phi$ contains the element $xyz-xy-xz-yz$, so $\phi$ induces a surjective map $\psi: D \onto S_0$.  Now, $D$ is an integral domain of dimension 2, and $S_0$ is a domain of dimension at least two, since $(0) \subset (r,s)S \cap S_0 = (ru, su) \subset (r,s,t)S \cap S_0 = (ru, su, tu)$ is a chain of primes of length 2. Since any nontrivial quotient ring of $D$ must have dimension 1 or less, it follows that $\psi$ is an isomorphism.  Thus, $S^\times = S_0^\times \cong D^\times = k^\times$, whence $S$ is unit-additive.

However, $S$ is not E-simple, since $u=\frac 1r + \frac 1s +\frac 1t$ is clearly an Egyptian element of $S$, but as we showed above, it is not a unit.
\end{example}

\subsection{The quotients of $R(K[x,y])$}

The goal of this section is to describe the quotients at prime ideals of the ring $R=R(K[x,y])$. 

From the results in the previous sections we know that $\dim R =2$ and every prime of $R$ is of the form $\p_f$ for some $f \in D = K[x,y]$. The ring $K[x,y]$ is E-simple, hence using Theorem \ref{thm:islocal}, it is straightforward to see that $\p_f = \m$ is the maximal ideal of $R$ if and only $f \in K$. 

In the following we find sufficient conditions on a polynomial $f$ to have $\p_f=(0)$ (i.e. $\frac 1f$ belongs to each nonzero prime of $R$). These conditions allow us to show that all the quotients of $R$ by height one primes are DVRs.

We recall that for linear forms $f$ and for polynomials of the form $f = x^p +y^q$ with $p,q$ coprime, the prime $\p_f $ is nonzero and non-maximal \cite[Lemma 3.6, Lemma 5.5]{nmeGuLo-poly}, and therefore it has height one.

We start with some simple linear algebra remarks about polynomials.

  \begin{rem}
\label{linalgrem}
Let $f_1, \ldots, f_n \in K[t]$ be polynomials in one variable over a field $K$. A standard linear algebra argument shows that if $\deg f_i \leq n-2$ for every $i$, then the set $\{ f_1, \ldots, f_n \}$ is not $K$-linearly independent. Equivalently, also if $f_1, \ldots, f_n \in t K[t]$ and $\deg f_i \leq n-1$ for every $i$, then the set $\{ f_1, \ldots, f_n \}$ is not $K$-linearly independent.
\end{rem}

  \begin{lemma}
\label{linalg1}
Let $f,g \in tK[t]$ be two polynomials in one variable with zero constant term. For every $N \gg 0$ large enough, the set $\{ f^ig^j \}_{0 \leq i,j \leq N}$ is not $K$-linearly independent.
\end{lemma}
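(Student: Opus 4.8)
The plan is to prove this by a bare dimension count, in the spirit of Remark~\ref{linalgrem}. First I would dispose of the degenerate cases: if $f=0$ or $g=0$, then for any $N\geq 1$ the set $\{f^ig^j\}_{0\le i,j\le N}$ contains the zero polynomial (e.g.\ $f^1g^0$ or $f^0g^1$), so it is automatically not $K$-linearly independent. Hence I may assume $f,g\neq 0$, and since $f,g\in tK[t]$ this forces $a:=\deg f\geq 1$ and $b:=\deg g\geq 1$.

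Next, the key observation: for every pair $(i,j)$ with $0\le i,j\le N$, the polynomial $f^ig^j$ has degree $ai+bj\le (a+b)N$. Thus all $(N+1)^2$ elements of the set lie in the $K$-vector space $W_N$ of polynomials of degree at most $(a+b)N$, which has $\dim_K W_N=(a+b)N+1$. (One could instead discard the single element $f^0g^0=1$, note that the remaining $(N+1)^2-1$ polynomials all lie in $tK[t]$ with degrees bounded by $(a+b)N$, and apply Remark~\ref{linalgrem} verbatim with $n=(N+1)^2-1$; but the crude count below is enough.)

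Finally I would compare cardinalities. We have $(N+1)^2=N^2+2N+1$ vectors inside a space of dimension $(a+b)N+1$, and $N^2+2N+1>(a+b)N+1$ is equivalent (dividing by $N>0$) to $N+2>a+b$, i.e.\ to $N\geq a+b-1$. So for all $N\geq a+b-1$ the number of vectors strictly exceeds the dimension of the ambient space $W_N$, forcing $K$-linear dependence; this is exactly the assertion for $N\gg 0$. There is essentially no obstacle here: the only points needing a moment's care are the degenerate cases $f=0$ or $g=0$ and reading off the correct threshold $N\geq a+b-1$ from the inequality $(N+1)^2>(a+b)N+1$.
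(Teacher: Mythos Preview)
Your proof is correct and follows essentially the same dimension-counting approach as the paper: both bound the degrees of the $(N+1)^2$ monomials $f^ig^j$ by $(a+b)N$ and observe that for large $N$ this is smaller than the number of monomials, forcing dependence. Your version is slightly more careful (you handle the degenerate cases $f=0$ or $g=0$ explicitly and extract the sharper threshold $N\geq a+b-1$, whereas the paper takes $N>a+b$ and appeals directly to Remark~\ref{linalgrem}), but the argument is the same.
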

 
 \begin{proof}
The set $\{ f^ig^j \}_{0 \leq i,j \leq N}$ has cardinality $(N+1)^2$. The degree of each element of this set is bounded above by $N(\deg f + \deg g)$. Hence, it is enough to choose $N > \deg f + \deg g$ and apply Remark \ref{linalgrem}.
 \end{proof}

Recall that from Definition \ref{def:principalfactroid}, for $g \in D$, the principal factroid $[g]_D$ is the smallest additive subgroup of $D$ containing $g$ and closed under factors. By Theorem \ref{thm:factroids}, we have that $\frac{f}{g} \in R(D)$ if and only if there exists $h \in D$ such that $hf \in [hg]_D$.

  \begin{lemma}
\label{linalg2}
Let $f,g \in D=K[x,y]$ be polynomials with zero constant term. Suppose that $f,g$ are coprime \rm(\it equivalently, they form a regular sequence in $D_{(x,y)}$\rm)\it. Then, for every $N \gg 0$ large enough, $ \frac{x}{(fg)^N}, \frac{y}{(fg)^N} \in R(D).$
\end{lemma}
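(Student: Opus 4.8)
The plan is to use the factroid criterion from Theorem~\ref{thm:factroids}: to show $\frac{x}{(fg)^N} \in R(D)$, it suffices to produce some $h \in D^\circ$ with $hx \in [h(fg)^N]_D$, and similarly for $y$. The natural choice, suggested by the shape of Lemma~\ref{linalg1}, is to take $h = (fg)^M$ for a suitable $M$, so that what we really need is $(fg)^{M}x \in [(fg)^{M+N}]_D$ and $(fg)^M y \in [(fg)^{M+N}]_D$; renaming, it is enough to show that for $N$ large we have $x, y \in [(fg)^N]_D$ after multiplying by a power of $fg$, i.e. that $(fg)^M \cdot x$ and $(fg)^M \cdot y$ lie in $[(fg)^{M+N}]_D$ for some $M$.

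First I would run Lemma~\ref{linalg1} on the one-variable polynomials $f,g$ — wait, here $f,g$ are in two variables, so instead I would apply the two-variable analogue of the linear-algebra remark: the monomials $x^i y^j$ with $i+j \leq \binom{n}{2}$-ish span a space of bounded dimension, so among $(N+1)^2$ elements $(fg)^N$-related products there is a linear dependence. Concretely: the set $\{(fg)^i \cdot (fg)^{N-i}\}$ is not the right object; rather, consider that $[(fg)^N]_D$ contains all sums of factors of $(fg)^N$, hence all products $f^a g^b$ with $a,b \leq N$ (these are factors of $(fg)^N$ since $f,g$ are coprime, so $f^a g^b \mid (fg)^N$ whenever $a, b \leq N$), and then also all sums of factors of those, iterating. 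The key point is that $f, g$ have zero constant term, so every $f^a g^b$ with $a+b \geq 1$ lies in the ideal $(x,y)$, and more usefully in $\mathfrak{m}$-adic terms has order $\geq 1$. Now by the linear-algebra argument (Remark~\ref{linalgrem}, two-variable version via bounded degree), for $N \gg 0$ the set $\{f^i g^j : 0 \leq i,j \leq N\}$ — which has $(N+1)^2$ elements all of degree $\leq N(\deg f + \deg g)$ and all divisible by... hmm, they are not all in a single $tK[t]$, but they do all have zero constant term once $(i,j)\neq(0,0)$ — is $K$-linearly dependent once $N$ exceeds a bound depending only on $\deg f + \deg g$. That gives a nontrivial relation $\sum_{i,j} c_{ij} f^i g^j = 0$.

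The main obstacle, and the heart of the argument, is converting this linear dependence into the membership $(fg)^M x \in [(fg)^{M+N}]_D$. The idea: from the relation $\sum c_{ij} f^i g^j = 0$ with not all $c_{ij}$ zero, isolate the term(s) of lowest order; after dividing the whole relation by a common power of some irreducible (or by working in the local ring $D_{(x,y)}$ where $f,g$ form a regular sequence, as the hypothesis explicitly notes), one obtains an expression of a unit times a monomial, or more precisely forces a relation among the $f^i g^j$ that, combined with closure under factors, drags $x$ and $y$ into the factroid generated by a high power of $fg$. I expect this to work as follows: use the dependence to write some $(fg)^a = \sum (\text{factors of } (fg)^b)$ with $a < b$, then leverage that $f, g$ generate — up to the unit group and radical — a large enough subring; since $V(f) \cap V(g) = \{(0,0)\}$ (coprimality in two variables over a field, at least after passing to the local ring), the radical of $(f,g)$ is $(x,y)$, so $x^k, y^k \in (f,g)$ for some $k$, and one pushes this through the factroid closure (which is closed under taking factors, hence "sees" $x$ from $x^k$). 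Assembling these — the linear dependence to get a relation at bounded degree, the Nullstellensatz/regular-sequence input to get $x^k, y^k \in (f,g)$, and the factor-closure of $[\,\cdot\,]_D$ to descend from $k$-th powers to $x$ and $y$ themselves — and then reading off the needed $h = (fg)^M$ from the degrees involved, should complete the proof via Theorem~\ref{thm:factroids}. The delicate bookkeeping will be making the power $N$ uniform for both $x$ and $y$ simultaneously, which is immediate once one takes $N$ to be the maximum of the two bounds.
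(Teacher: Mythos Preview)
Your proposal has a genuine gap at its central step. You claim that for $N\gg 0$ the set $\{f^ig^j : 0\le i,j\le N\}$ becomes $K$-linearly dependent by a dimension count. This is false: in \emph{two} variables, polynomials of degree $\le Nc$ (with $c=\deg f+\deg g\ge 2$) span a space of dimension $\binom{Nc+2}{2}\sim \tfrac{c^2}{2}N^2$, which for $c\ge 2$ eventually exceeds $(N+1)^2$, so counting never forces a relation. More decisively, the coprimality hypothesis makes $f,g$ algebraically independent over $K$ (indeed $K[f,g]\cong K[x,y]$; the paper cites Kaplansky for this), so the $f^ig^j$ are linearly independent for \emph{every} $N$. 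Thus the relation you want simply does not exist, and the subsequent Nullstellensatz idea ($x^k\in(f,g)$) does not by itself give $x\in[(fg)^N]_D$ either: membership in the ideal $(f,g)$ is a very different condition from being a factor of a sum of factors of $(fg)^N$.

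The paper's key idea, absent from your sketch, is to reduce modulo $x$. Write $f=f_1(y)+xf_2$ and $g=g_1(y)+xg_2$ with $f_1,g_1\in yK[y]$ (if $x\mid f$ or $x\mid g$ the result is immediate). Now $f_1,g_1$ live in \emph{one} variable with zero constant term, so Lemma~\ref{linalg1} \emph{does} apply and gives a nontrivial relation $\sum u_{ij}f_1^ig_1^j=0$. Lift to $h:=\sum u_{ij}f^ig^j\in D$; by algebraic independence of $f,g$ we have $h\neq 0$, while $h\equiv 0\pmod{x}$ by construction, so $x\mid h$. Since each $f^ig^j$ with $i,j\le N$ divides $(fg)^N$, we have $h\in[(fg)^N]_D$, and factor-closure then gives $x\in[(fg)^N]_D$; Theorem~\ref{thm:factroids} finishes. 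The whole point is to manufacture a dependence in one variable and use algebraic independence in two variables to guarantee the lifted element is nonzero.
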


 \begin{proof}
It is enough to prove the result for $x$. If $x$ divides $f$ or $g$, the result follows immediately. Otherwise, write $f = f_1(y) + xf_2$, $g = g_1(y) + xg_2 $ with $f_1, g_1 \in yK[y] \setminus \{ 0 \} $. By Lemma \ref{linalg1}, there exist $N$ such that the set $\{ f_1^ig_1^j \}_{0 \leq i,j \leq N}$ is not $K$-linearly independent. Hence, there exist elements $u_{ij} \in K$, not all zero, such that $ \sum_{i,j} u_{ij} f_1^i g_1^j =0 $. Write $h = \sum_{i,j} u_{ij} f^i g^j $. Observe that $h \neq 0$ since $K[f,g] \cong K[x,y]$ by \cite[Section 5]{Kap-Rseq}.  Moreover, $h \in [f^Ng^N]_D$ and $h= \sum_{i,j} u_{ij} f_1^i g_1^j + x h' = x h'$ is divisible by $x$. It follows that $x \in [f^Ng^N]_D$, and this implies that $ \frac{x}{(fg)^N} \in R(D)$ by Theorem~\ref{thm:factroids}.
 \end{proof}

  To deal with polynomials with nonzero constant term, we can use the following result:

   \begin{lemma}
\label{lemma:associated}
Let $D$ be an E-simple integral domain with maximal subfield $K$. Let $f \in D \setminus K$ and $u \in K^\times$. Then 
$\frac{1}{f+u}$ and $\frac{1}{f}$ are associates in $R(D)$.
\end{lemma}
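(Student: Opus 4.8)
The plan is to prove the lemma by exhibiting an explicit unit of $R(D)$ relating the two elements: I will show that $\dfrac{f+u}{f} = 1 + u\cdot\dfrac1f$ is a unit of $R(D)$. Granting this, from $\dfrac1f = \dfrac1{f+u}\cdot\dfrac{f+u}{f}$ it follows at once that $\dfrac1f$ and $\dfrac1{f+u}$ generate the same principal ideal of $R(D)$, i.e.\ they are associates, which is the claim.

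First I would check that all the elements in sight genuinely live in $R(D)$. Since $f\in D\setminus K$ we have $f\neq 0$, so $\frac1f\in R(D)$. Since $u\in K$ while $f\notin K$, the element $f+u$ is not in $K$; in particular $f+u\neq 0$, so $\frac1{f+u}\in R(D)$. And $u\in K^\times$ is by definition an Egyptian element of $D$, hence $u\in R(D)$. Therefore $1+u\cdot\frac1f$ is an honest element of $R(D)$. The crux is then to see it is a unit: because $D$ is E-simple with maximal subfield $K$, no element of $D\setminus K$ is Egyptian, so in particular $f$ is not Egyptian, and by Theorem~\ref{thm:islocal} the element $\frac1f$ lies in the maximal ideal $\m$ of $R(D)$. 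Since $\m$ is an ideal and $u\in R(D)$, we get $u\cdot\frac1f\in\m$; and as $R(D)$ is local with maximal ideal $\m$ (again Theorem~\ref{thm:islocal}), the element $1+u\cdot\frac1f$ is a unit of $R(D)$. This finishes the proof.

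There is no serious obstacle; the two points requiring a little care are that $u$ must be recognized as Egyptian (so that $u$, and hence $u/f$, makes sense inside $R(D)$), and that $f+u\notin K$, which keeps the roles of $f$ and $f+u$ symmetric. Indeed one could argue symmetrically with $\dfrac{f}{f+u} = 1 - u\cdot\dfrac1{f+u}$, using that $\frac1{f+u}\in\m$ because $f+u$ is likewise a non-Egyptian element of $D\setminus K$.
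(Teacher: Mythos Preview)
Your proof is correct and is essentially the same as the paper's: both exhibit the explicit unit $\dfrac{f+u}{f}=1+\dfrac{1}{u^{-1}f}$ (the paper writes its reciprocal $\bigl(1+\tfrac{1}{u^{-1}f}\bigr)^{-1}$) and observe it is a unit of $R(D)$ because the summand $\tfrac{1}{u^{-1}f}$ lies in the maximal ideal. The only cosmetic difference is that the paper cites Corollary~\ref{cor:units} for this last step, while you go directly to Theorem~\ref{thm:islocal}.
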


\begin{proof}
  Observe that: 
 $$ \frac{1}{f+u} = \frac{1}{f} \left( \frac{1}{1+\frac{1}{u^{-1}f}} \right)  $$ and the element $ 1+\frac{1}{u^{-1}f} $ is a unit in $R(D)$ by Corollary \ref{cor:units}. 
\end{proof}
 
   \begin{cor}
\label{pseudorad}
Let $f \in D=K[x,y]$ be a polynomial having at least two distinct non-associated irreducible factors. Then $\frac{1}{f}  $ is in the pseudoradical of $R(D)$. Equivalently, $\p_f = (0)$.
\end{cor}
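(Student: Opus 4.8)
The plan is to prove the equivalent statement $\p_f=(0)$; the claim about the pseudoradical then follows since, $\p_f$ being the largest prime of $R(D)$ avoiding $1/f$, one has $\p_f=(0)$ exactly when $1/f$ lies in every nonzero prime. By the remark after Proposition~\ref{pr:pxexists}, $R(D)_{\p_f}=R(D)[f]$, and since $R(D)\subseteq\Frac D$ with $1/d\in R(D)$ for every $d\in D^\circ$ we have $\Frac R(D)=\Frac D$; hence $\p_f=(0)$ if and only if $R(D)[f]=\Frac D$. Because $K\subseteq R(D)$ and each $1/d\in R(D)$, it suffices to show $x,y\in R(D)[f]$ (then $D=K[x,y]\subseteq R(D)[f]$, so $\Frac D\subseteq R(D)[f]$), and since $x=f^N\cdot(x/f^N)$, it is enough to find an $N\geq 1$ with $x/f^N,\,y/f^N\in R(D)$. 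Moreover, one may assume $f=pq$ for two distinct non-associated irreducible factors $p,q$: for a general such $f$ one has $pq\mid f$ and $1/f=(1/pq)\cdot\bigl(1/(f/pq)\bigr)$ with $1/(f/pq)\in R(D)$, so $1/f$ lies in every prime that contains $1/(pq)$.

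The next step is to pass to polynomials with zero constant term. By Lemma~\ref{lemma:associated} (valid since $D=K[x,y]$ is $E$-simple with maximal subfield $K$), $1/p$ is an associate in $R(D)$ of $1/p_0$, where $p_0:=p-p(0,0)$ has zero constant term (take $p_0=p$ if $p(0,0)=0$), and similarly $1/q$ is an associate of $1/q_0$. Setting $g:=p_0q_0$, we obtain $g/f\in R(D)^\times$, so $x/f^N=(x/g^N)(g/f)^N$ and the task reduces to showing $x/g^N,\,y/g^N\in R(D)$ for all $N\gg 0$.

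Now apply Lemma~\ref{linalg2} to the pair $(p_0,q_0)$: both have zero constant term, so --- provided they are coprime --- the lemma yields $x/(p_0q_0)^N=x/g^N\in R(D)$ and $y/g^N\in R(D)$ for every sufficiently large $N$, which is exactly what is needed. Tracing back, $x,y\in R(D)[f]$, hence $R(D)[f]=\Frac D$ and $\p_f=(0)$.

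The main obstacle is ensuring that $p_0$ and $q_0$ are coprime: although $p$ and $q$ are coprime, subtracting constant terms can introduce a common factor (for instance $x+1$ and $x+2$ both reduce to $x$). Handling this requires choosing the pair $p,q$ of non-associated irreducible factors of $f$ --- and, when necessary, first applying a linear change of the coordinates $x,y$ --- so that the polynomials $p_0,q_0$ obtained after recentering are still coprime; establishing that such a choice is always available under the stated hypothesis is the delicate point of the argument.
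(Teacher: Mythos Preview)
Your overall strategy matches the paper's: reduce via Lemma~\ref{lemma:associated} to factors with zero constant term, then invoke Lemma~\ref{linalg2} to place $x$ and $y$ in the appropriate ideal. The paper finishes by observing that $(\tfrac 1x)R \cap (\tfrac 1y)R$ lies in the pseudoradical (since $\tfrac 1x$ misses only $\p_x$ among height-one primes, and likewise $\tfrac 1y$ misses only $\p_y$), while you phrase the same conclusion as $x,y \in R(D)[f]$; these are equivalent reformulations.

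You have, however, put your finger on a genuine and unfixable gap. Your proposed repairs --- choosing the pair $p,q$ cleverly or applying a linear change of coordinates --- cannot succeed, because the corollary as stated is \emph{false}. Take $f = x(x+1) \in K[x,y]$: the factors $x$ and $x+1$ are irreducible and non-associated in $D$, yet by Lemma~\ref{lemma:associated} the elements $\tfrac 1x$ and $\tfrac 1{x+1}$ are associates in $R(D)$, so $\tfrac 1f$ is an associate of $\tfrac 1{x^2}$ and hence $\p_f = \p_{x^2} = \p_x$. By Theorem~\ref{thm:heightofprimes} the prime $\p_x$ has height~$1$, so $\p_f \neq (0)$. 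No linear change of coordinates helps here: the two factors differ by a constant, and that relation survives any affine substitution, so the recentered factors $p_0,q_0$ remain equal. The paper's own proof glosses over exactly this point (the sentence ``we can reduce to the case where each factor of $f$ has zero constant term'' tacitly assumes the reduced polynomial still has two non-associated irreducible factors). Both arguments become correct under the stronger hypothesis that $f$ admits two irreducible factors $p,q$ with $p-p(0,0)$ and $q-q(0,0)$ coprime --- equivalently, two irreducible factors not both lying in $K[h]$ for a single irreducible $h\in (x,y)$ --- and this is precisely what is used in the application to Theorem~\ref{quotientdim2}.
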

 
 \begin{proof}
 By Lemma \ref{lemma:associated}, we can reduce to the case where each factor of $f$ has zero constant term.
It is enough then to show that the reciprocal of the product of two non-associate irreducible polynomials $f,g \in (x,y)D$ is in the pseudoradical of $R(D)$. By Lemma \ref{linalg2}, there exists $N $ such that 
\[
\frac{1}{f^N g^N} \in \left( \frac{1}{x} \right)\cap \left( \frac{1}{y} \right).
\]
Recall now that by Theorem \ref{thm:heightofprimes}, $ \p_x$ and $\p_y$ are distinct height one primes. 
 Since $1/x$ is in every height one prime except $\p_x$, and since $1/y$ is in every height one prime except $\p_y$,  it follows that $ \left( \frac{1}{x} \right)\cap \left( \frac{1}{y} \right) $ is contained in the pseudoradical of $R(D)$. We conclude using the fact that the pseudoradical is a radical ideal.
 \end{proof}

 Before proving the theorem describing the quotients at height one primes, we provide a general sufficient condition that guarantees equality $\p_f=\p_g$ for elements $f,g$.

 \begin{lemma}
\label{lemalgdip}
Let $D$ be an E-simple integral domain with maximal subfield $K$. Suppose that $f,g \in D \setminus K$ are not algebraically independent over $K$. Then, $\p_f = \p_g$. In particular, if $g \in K[f] \setminus K$, then $\p_f = \p_g$.  
\end{lemma}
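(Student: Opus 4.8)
The plan is to reduce the equality $\p_f=\p_g$ to the two statements $g\in R(D)[f]$ and $f\in R(D)[g]$, and then to extract these from a polynomial relation between $f$ and $g$. For the reduction, recall (the remark following Definition/Proposition~\ref{pr:pxexists}) that $R(D)_{\p_f}=R(D)[f]=R(D[f^{-1}])$, a local ring by Theorem~\ref{thm:islocal}; since $1/g\in R(D)$, we have $1/g\notin\p_f$ if and only if $1/g$ is a unit of $R(D)_{\p_f}=R(D)[f]$, i.e.\ if and only if $g\in R(D)[f]$. Moreover $R(D[g^{-1}])=R(D)[g]$ is the localization of $R(D)$ at the powers of $1/g$ and is local, so among the primes of $R(D)$ avoiding $1/g$ there is a greatest one, which by the uniqueness clause of Definition/Proposition~\ref{pr:pxexists} must be $\p_g$; hence $1/g\notin\p_f$ forces $\p_f\subseteq\p_g$, and by the symmetry of the hypothesis $1/f\notin\p_g$ forces $\p_g\subseteq\p_f$. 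Thus it is enough to prove $g\in R(D)[f]$ (the statement for $f$ then following by exchanging $f$ and $g$).

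Next I would record that $f$ and $g$ are transcendental over $K$: if $f$ were algebraic over $K$, then $K[f]$ would be a domain module-finite over $K$, hence a field, hence a subfield of $D$ properly containing $K$, contradicting maximality of $K$. Now use the hypothesis: pick $0\neq P\in K[X,Y]$ with $P(f,g)=0$ and write $P=\sum_{j=0}^{d}a_j(X)Y^{j}$ with $a_j\in K[X]$ and $a_d\neq 0$. Transcendence of $f$ forces $d\geq 1$ (else $a_0(f)=0$ with $a_0\neq 0$) and $a_d(f)\neq 0$. Dividing the relation $\sum_{j=0}^{d}a_j(f)g^{j}=0$ by $g^{d-1}$ in $\Frac D$ and solving for $g$ yields
\[
g \;=\; \frac{-1}{a_d(f)}\left( a_{d-1}(f) \;+\; \sum_{k=1}^{d-1}\frac{a_{d-1-k}(f)}{g^{k}} \right).
\]
Here $1/a_d(f)\in R(D)$ because $a_d(f)$ is a nonzero element of $D$; each $1/g^{k}\in R(D)$; and since $K\subseteq R(D)$, each $a_i(f)$ lies in $K[f]\subseteq R(D)[f]$. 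Hence the right-hand side lies in the ring $R(D)[f]$, so $g\in R(D)[f]$, as required. For the final assertion, if $g\in K[f]\setminus K$ then $g$ is a nonconstant polynomial in $f$ (nonconstant since $f$ is transcendental), so $Y-h(X)\in K[X,Y]$ is a nonzero relation for the appropriate $h$, and $f,g$ are algebraically dependent; the first part applies.

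The substantive point is the displayed identity together with the bookkeeping of which summands lie in $R(D)$ and which only in the larger ring $R(D)[f]=R(D[f^{-1}])$; everything else is the localization dictionary for $\p_f$ recorded after Definition/Proposition~\ref{pr:pxexists}. I expect the only place needing a little care is the reduction step, namely checking that $\p_g$ is genuinely the \emph{largest} prime avoiding $1/g$ — which is exactly what the uniqueness in Definition/Proposition~\ref{pr:pxexists}, combined with the locality of $R(D[g^{-1}])$ from Theorem~\ref{thm:islocal}, provides.
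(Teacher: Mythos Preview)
Your argument is correct. The reduction via the identification $R(D)_{\p_f}=R(D)[f]$ is sound, and solving the relation $P(f,g)=0$ for $g$ after dividing by $g^{d-1}$ cleanly places $g$ in $R(D)[f]$, with the transcendence of $f$ and $g$ over $K$ guaranteeing both $d\ge 1$ and $a_d(f)\neq 0$ (and, symmetrically, positive $X$-degree for the reverse step).

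Your route differs from the paper's. The paper first treats the special case $g\in K[f]\setminus K$ directly, using Lemma~\ref{lemma:associated} to see that $1/g$ is associate in $R(D)$ to some $1/f^e$, whence $\p_g=\p_f$. It then handles the general case by writing the relation as $\phi_1+g\psi=0$ with $\phi_1\in K[f]\setminus K$, invoking the special case to get $\p_{\phi_1}=\p_f$, and deducing $\p_f\subseteq\p_g$ from $1/(g\psi)\notin\p_f$. You instead bypass Lemma~\ref{lemma:associated} entirely and obtain the general case in one stroke via the localization dictionary, deriving the ``in particular'' clause as a corollary rather than a stepping stone. Your approach is more self-contained; the paper's has the minor advantage of isolating the associate relation $1/f\sim 1/(f+u)$, which is reused elsewhere.
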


\begin{proof}
By definition of prime ideal, it is clear that $\p_f = \p_{f^e}$ for every $e \geq 1$. By Lemma \ref{lemma:associated}, $\frac{1}{f} $ and $\frac{1}{f+u}$ are associates in $R=R(D)$ for every $u \in K$.  It follows easily that the reciprocal of any element $g \in K[f] \setminus K$ is associated in $R$ to $\frac{1}{f^e}$, for some $e$. Therefore $\p_g = \p_f$.
 
  Now let $g$  be arbitrary.
By assumption, there exists an equation $ \phi= \sum_{i,j} u_{ij} f^ig^j=0 $ such that the coefficients $u_{ij} \in K$ are not all zero. Since $K[f,g]$ is a domain, we can assume without loss of generality that $\phi$ is a multiple in $D$ of neither $f$ nor $g$. We can then write $ 0= \phi = \phi_1 + g \psi $ with $\phi_1 \in K[f] \setminus K$ (since $g$ is not invertible) and $\psi \in K[f,g] \setminus \{0\}$ (since $f$ is not invertible, hence not algebraic over $K$). By the first part of the proof we have $ \p_{\phi_1} = \p_f $. Thus $\p_{g \psi} = \p_f$, which implies $\p_f \subseteq \p_g$. Switching the roles of $f$ and $g$ in the above argument, we obtain the other inclusion $\p_f \supseteq \p_g$.
\end{proof}

 \begin{thm}
\label{quotientdim2}
Let $\p$ be a height one prime of $R=R(K[x,y])$. Then $ R/\p $ is a DVR.
\end{thm}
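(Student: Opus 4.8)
The plan is to reduce to the case $\p=\p_g$ for a well-chosen $g$, to identify $R/\p_g$ via Theorem~\ref{thm:modp} with the reciprocal complement of the subring $K[g]\subseteq K[x,y]$, and then to quote Theorem~\ref{thm:Euclidean}. So I would begin with the reductions. Since $\dim R=2$ (Theorem~\ref{dimn}) and $\hgt\p=1$, we get $\dim R/\p\le 1<\infty$, so Theorem~\ref{thm:p=px} gives $f\in D^\circ$ with $\p=\p_f$. As $\p\neq 0$, Corollary~\ref{pseudorad} forbids $f$ from having two non-associated irreducible factors, and $\p\neq\m$ forces $f\notin K$; hence $f$ is, up to a unit, a power of an irreducible $p\in D\setminus K$ and $\p_f=\p_p$. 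If $p(0,0)\neq 0$, I would replace $p$ by $p-p(0,0)$: by Lemma~\ref{lemma:associated} this leaves $\p_p$ unchanged, and a second application of Corollary~\ref{pseudorad} lets me again extract an irreducible factor, so that after finitely many steps we reach $\p=\p_g$ with $g\in D=K[x,y]$ irreducible and $g(0,0)=0$. Then Theorem~\ref{thm:modp} gives $R/\p_g\cong R(L)$ with $L=\{h\in D^\circ:1/h\notin\p_g\}\cup\{0\}$; note $K\subseteq L$, $g\in L$, and since no positive-degree element of $D$ is Egyptian (Proposition~\ref{pr:gradedarenotegyptian}) the ring $L$ is E-simple with maximal subfield $K$ and is not a field.

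The heart of the argument is to show $L=K[g]$. For $h\in K[g]\setminus K$, $h$ is algebraically dependent on $g$ over $K$, so $\p_h=\p_g$ by Lemma~\ref{lemalgdip}; hence $K[g]\subseteq L$. Conversely, take $h\in L\setminus K$. Since the only primes of $R$ containing the height-one prime $\p_g$ are $\p_g$ and $\m$, and $\p_h=\m$ would force $h\in K$, I get $\p_h=\p_g$; suppose for contradiction $h\notin K[g]$. If $h$ is algebraic over $K(g)$, I would argue that the irreducible polynomial $g$ with $g(0,0)=0$ cannot be written as a polynomial of degree $\ge2$ in an element of $\overline K[x,y]$ — for such a decomposition $g=P(h')$, together with $g(0,0)=0$, splits off the positive-degree factor $h'-h'(0,0)$ and exhibits $g$ as a product of two positive-degree polynomials over $\overline K$, which after Galois descent would make $g$ reducible over $K$. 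A classical result on pencils of plane curves then says the generic fibre of $g\colon\A^2\to\A^1$ is geometrically integral, i.e.\ $K(g)$ is relatively algebraically closed in $K(x,y)$; so $h\in K(g)$, and an elementary factoriality computation gives $K(g)\cap K[x,y]=K[g]$, so $h\in K[g]$, a contradiction. Hence $h$ is algebraically independent of $g$. Dividing out from $h$ the powers of the irreducible $g$ and renormalizing constant terms via Lemma~\ref{lemma:associated} — none of which changes the associated prime, and any step producing a constant would already place the original $h$ in $K[g]$ — I may assume $h$ and $g$ coprime with $h(0,0)=0$. Then Lemma~\ref{linalg2} produces $N$ with $\tfrac{x}{(gh)^N},\tfrac{y}{(gh)^N}\in R(D)$, so $\tfrac{1}{(gh)^N}\in(1/x)R(D)\cap(1/y)R(D)$, which by the proof of Corollary~\ref{pseudorad} lies in the pseudoradical of $R(D)$ and hence in $\p_g$; but $\tfrac{1}{(gh)^N}=(1/g)^N(1/h)^N$ with $1/g\notin\p_g$ and $1/h\notin\p_g$, contradicting primality. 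Therefore $L\subseteq K[g]$, so $L=K[g]\cong K[t]$.

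Finally, by Theorem~\ref{thm:Euclidean}, $R/\p\cong R(K[t])=K[t^{-1}]_{(t^{-1})}$, a discrete rank-one valuation ring, which finishes the proof.

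The step I expect to be the main obstacle is the inclusion $L\subseteq K[g]$, and within it precisely the case where $h$ is algebraic over $K(g)$: this forces me to know that $K(g)$ is relatively algebraically closed in $K(x,y)$, equivalently that an irreducible $g$ with zero constant term is not ``composite'', which is a genuine input from the geometry of the fibration $g\colon\A^2\to\A^1$ (and needs a small Galois-descent argument over non-closed $K$). The coprime, zero-constant-term reductions feeding Lemma~\ref{linalg2} are routine but need to be organized carefully so that the degree strictly drops at each normalization.
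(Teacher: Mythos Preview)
Your overall strategy matches the paper's: reduce to $\p=\p_g$ with $g$ irreducible, identify $R/\p_g\cong R(L)$ via Theorem~\ref{thm:modp}, and prove $L=K[g]$. But the case split into ``$h$ algebraic over $K(g)$'' versus ``algebraically independent'' is unnecessary and manufactures the very obstacle you flag at the end. Your Case~B argument already handles \emph{every} $h\in L\setminus K[g]$: Corollary~\ref{pseudorad} (and Lemma~\ref{linalg2} behind it) require only that $gh$ have two non-associated irreducible factors, which follows from $g$ irreducible and $\gcd(g,h)=1$ --- algebraic independence plays no role anywhere. The reductions you describe (subtract the constant term via Lemma~\ref{lemma:associated}, divide out the maximal power of $g$, iterate) keep $1/h\notin\p_g$ and $h\notin K[g]$ while strictly decreasing degree, so they terminate in some nonconstant $h$ coprime to $g$ with $h(0,0)=0$; then Corollary~\ref{pseudorad} applied to $gh$ gives $1/(gh)$ in the pseudoradical, hence in $\p_g$, contradicting $1/g,1/h\notin\p_g$. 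This is precisely the paper's proof.

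So you should delete Case~A entirely. This removes your ``main obstacle'' and the need for any external input on pencils of curves, composite polynomials, or relative algebraic closure --- and incidentally avoids the Galois-descent step in your sketch, which as written is not sound (reducibility over $\overline K$ does not in general descend to reducibility over $K$; think of $x^2+y^2$ over $\R$). A minor note on bookkeeping: what you actually need preserved under the reductions is the condition $1/h\notin\p_g$ (i.e.\ $h\in L$), not the stronger claim that $\p_h$ is unchanged; the former is immediate from Lemma~\ref{lemma:associated} and the primality of $\p_g$.
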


\begin{proof}
By Theorem \ref{thm:modp}, $ R/\p  \cong R(L)$ where $L=\{ f \in K[x,y] \mid \frac{1}{f} \not \in \p \}$ is clearly generated as a $K$-algebra by irreducible polynomials.
Since the Krull dimension of $R$ is finite, by Theorem \ref{thm:p=px}, there exists $f \in K[x,y]$ such that $\p=\p_f$. By Lemma \ref{lemalgdip}, we have $K[f] \subseteq L$ and we can assume without loss of generality $f \in (x,y)$. We can also assume that $f$ is irreducible by Corollary \ref{pseudorad} (using also the fact that $\p_f = \p_{f^e}$ for every $e \geq 1$). 

Again by Lemma \ref{lemalgdip}, if we assume $K[f] \subsetneq L$, then there exists some element of $ L \setminus K[f] $ with zero constant term and relatively prime with $f$.
Call such an element $g$. It follows that the product $fg$ satisfies the assumption of Corollary \ref{pseudorad}. Hence, $\frac{1}{fg}$ is in the pseudoradical of $R(D)$ and therefore $\frac{1}{fg} \in \p $, showing $\frac{1}{g} \in \p$  so that $g \not \in L$. This is a contradiction. Thus, $L=K[f]$ and $ R/\p  \cong R(L) = R(K[f])$ is a DVR by Theorem \ref{thm:Euclconverse}
\end{proof}

\begin{rem}
    \label{remark:mirror}
    We point out an interesting comparison between the properties of the regular local ring $K[x,y]_{(x,y)}$ and of the ring $R(K[x,y])$. Both rings are local and two-dimensional, and with respect to some properties they act one as a mirror of the other:
    \begin{itemize}
        \item In $K[x,y]_{(x,y)}$, every nonzero (finitely generated) ideal is contained in only finitely many prime ideals. In $R(K[x,y])$, every finitely generated ideal is contained in all but finitely many prime ideals \cite[Theorem 7.5]{nmeGuLo-poly}. 
        \item In $K[x,y]_{(x,y)}$, the localizations at height one primes are all regular local rings. In $R(K[x,y])$, the localizations at height one primes are one-dimensional and Noetherian but sometimes not regular \cite[Theorem 7.3]{nmeGuLo-poly}, \cite[Example 3.3]{Gu-recomp}. 
        \item In $K[x,y]_{(x,y)}$, the quotients at height one primes are one-dimensional and Noetherian but sometimes not regular. In $R(K[x,y])$, the quotients at height one primes are all regular local rings (by Theorem \ref{quotientdim2}).
    \end{itemize}
\end{rem}

 \subsection{Relations between irreducibility in $D$ and properties of $R(D)$}

In this section we show how conditions on elements of a domain $D$, related with irreducibility, imply conditions on the corresponding prime ideals in $R(D)$. 
 
 In the following $D$ will denote an E-simple domain with maximal subfield $K$. The reciprocal complement $R(D)$ of $D$ will be denoted by $R$ and its maximal ideal by $\m$. Denote by $\overline{R}$ the integral closure of $R$ in its quotient field.

We define now examples of regular factroids associated to overrings of $R(D).$

\begin{defn}\label{def:GAg}
   For $g \in D$ and $A$ an overring of $R$, we define the factroid
 $$ G_{A}(g):= \left\lbrace f \in D \,\middle\vert\, \frac{f}{g} \in A \right\rbrace. $$ 
 Notice that if $A=R(D)$, then $G_A(g)= G(g)$ is the regular factroid generated by $\{g\}$, as in Definition~\ref{def:regularfactroid}.  
\end{defn}

 We have a chain of inclusions
 $$  \langle 1,g \rangle_K \subseteq [g]_D \subseteq G(g) \subseteq  G_{\overline{R}}(g). $$
 
 If $D = \bigoplus_{s \in \Gamma_{\geq 0}} D_s$ is graded with respect to the positive part of a totally ordered abelian group and $D_0=K$, 
 then we also have $$ G_{\overline{R}}(g) \subseteq \lbrace f \in D \mid \deg(f) \leq \deg(g) \rbrace. $$ 
 Indeed, observe that the ring
 $$ W= \left\lbrace \frac{f}{g} \,\middle\vert\, f,g \in D, \ \deg(f) \leq \deg(g) \right\rbrace  $$ is a valuation overring of $R$.   Since $W$ is integrally closed, it also contains $\overline{R}$. Also note that $\m \subseteq \m_W$, as whenever $f \in D \setminus K$ we have $\deg(f) >0$, so that $\m \overline{R} \subseteq \m W \subseteq \m_W$.

In the case $D=K[x_1, \ldots, x_n]$, we have that $\lbrace f \in D \mid  \deg(f) \leq \deg(g) \rbrace$ is a finite dimensional $K$-vector space, and so $G(g) $ and $G_{\overline{R}}(g)$ are finite dimensional $K$-vector spaces as well.
 \medskip
 
 In the graded case, for an element $f \in D$, denote by $f_h$ its homogeneous component of largest degree. We show now that the elements of $\overline{R}$ of the form $\frac fg$ with $\deg(f)= \deg(g)$ are very special.

  \begin{lemma}
\label{hompart} Let $\Gamma$ be a totally ordered abelian group.
Let $D$ be positively $\Gamma$-graded over a field $K$.  Assume $K$ is algebraically closed in $\Frac D$ \rm(\it e.g., if $D=K[x_1, \ldots, x_n]$\rm)\it.
Let $f,g \in D$ be such that $\deg(f)=\deg(g)=\delta \in \Gamma_{\geq 0}$. 
Suppose that $ \frac{f}{g} \in \overline{R}$. Then $f_h = u g_h$ for some $u \in K$.
\end{lemma}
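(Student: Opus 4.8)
The plan is to work modulo the maximal ideal of the valuation overring $W=\{f/g\mid f,g\in D,\ \deg f\le\deg g\}$ introduced just above, whose valuation $\nu$ satisfies $\nu(f/g)=\deg g-\deg f$ and whose maximal ideal is $\m_W=\{f/g\mid \deg f<\deg g\}$. Since $W$ is integrally closed, $\overline R\subseteq W$, so $\alpha:=f/g$ lies in $W$; and since $\deg f=\deg g=\delta$ we have $\nu(\alpha)=0$, so $\alpha$ is a unit of $W$ with a well-defined nonzero residue in $\kappa:=W/\m_W$. The goal is to show this residue equals $f_h/g_h$ and lies in $K$.

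First I would identify the residue field $\kappa$ with the subfield $L\subseteq\Frac D$ consisting of $0$ together with the quotients $p/q$ of nonzero homogeneous elements $p,q\in D$ of equal degree (the degree-$0$ part of the homogeneous localization of $D$). This $L$ is a field, it contains $K=D_0$, and it lies inside $W$ with $\nu$ identically $0$ on $L\setminus\{0\}$, so the composite $L\hookrightarrow W\twoheadrightarrow\kappa$ is injective, being a nonzero ring map out of a field. It is also surjective: any $f/g\in W$ with $\deg f<\deg g$ lies in $\m_W$, while any $f/g\in W$ with $\deg f=\deg g=:e$ satisfies $f/g-f_h/g_h=\bigl((f-f_h)g_h-(g-g_h)f_h\bigr)/(gg_h)\in\m_W$, the numerator having degree $<2e=\deg(gg_h)$, so $f/g\equiv f_h/g_h\pmod{\m_W}$ with $f_h/g_h\in L$. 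Identifying $\kappa$ with $L$ via this isomorphism, the reduction map $\rho\colon W\twoheadrightarrow\kappa=L$ restricts to the identity on $L$, and the same computation gives $\rho(\alpha)=f_h/g_h$.

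Next I would push the integrality relation through $\rho$. By Theorem~\ref{thm:islocal} (cf.\ Corollary~\ref{cor:units}) we have $R=K+\m$, and $\m\subseteq\m_W$, so $\rho(a)\in K$ for every $a\in R$. An integral equation $\alpha^m+a_{m-1}\alpha^{m-1}+\dots+a_0=0$ with $a_i\in R$ therefore reduces under $\rho$ to a monic polynomial relation $(f_h/g_h)^m+\rho(a_{m-1})(f_h/g_h)^{m-1}+\dots+\rho(a_0)=0$ with coefficients $\rho(a_i)\in K$, holding in $L\subseteq\Frac D$. Hence $f_h/g_h$ is algebraic over $K$, and since $K$ is algebraically closed in $\Frac D$ we conclude $f_h/g_h=:u\in K$, that is, $f_h=ug_h$.

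The step that needs care is the identification of $\kappa$ with the subfield $L$ of $\Frac D$, together with the observation that the image of $R$---hence of the coefficients of the integral equation---lands in the copy of $K$ inside $L$; the remainder is routine bookkeeping with degrees. This identification is the crux because it converts the algebraicity of $\rho(\alpha)$ over $\rho(R)$, which a priori lives only in the abstract residue field, into algebraicity of the honest element $f_h/g_h\in\Frac D$ over $K$, making the hypothesis on $K$ applicable.
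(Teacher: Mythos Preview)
Your proof is correct and follows essentially the same approach as the paper: both reduce the integral equation modulo $\m_W$ (using $R=K+\m$ and $\m\subseteq\m_W$) to obtain a monic polynomial relation for $f_h/g_h$ over $K$, then invoke the hypothesis that $K$ is algebraically closed in $\Frac D$. The only difference is packaging: you set up the identification $W/\m_W\cong L\subseteq\Frac D$ once and then apply the residue map $\rho$, whereas the paper carries out the same computation by hand---clearing denominators to get $h=f^e+u_{e-1}f^{e-1}g+\cdots+u_0g^e$ with $\deg h<e\delta$, and then reading off the vanishing of the degree-$e\delta$ component---which is precisely your formula $\rho(\alpha)=f_h/g_h$ specialized to this instance.
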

 
 \begin{proof}
Let 
 $$ \left( \frac{f}{g} \right)^e + a_{e-1} \left( \frac{f}{g} \right)^{e-1} + \ldots + a_{1}  \frac{f}{g} + a_0=0$$ be an equation of integral dependence with $a_0, \ldots, a_{e-1} \in R$. For every $i=0, \ldots, e-1$, we can write $a_i=u_i+b_i$ with $u_i \in K$ and $b_i \in \m$. We have
 $$ \left( \frac{f}{g} \right)^e + u_{e-1} \left( \frac{f}{g} \right)^{e-1} + \ldots + u_{1}  \frac{f}{g} + u_0  \in \m \overline{R}  \subseteq \m_W, $$
 since $f/g \in \overline{R}$ and each $b_i \in \m$. 
 
 In particular, $h:=f^e + u_{e-1} f^{e-1}g + \ldots + u_1 fg^{e-1} + u_0 g^e$ is in $D$ and $\frac{h}{g^e} \in \m_W$. Hence, $\deg(h) < e\delta$ and, passing to the homogeneous components of degree $e\delta$, we obtain 
  $f_h^e + u_{e-1} f_h^{e-1}g_h + \ldots + u_1 f_hg_h^{e-1} + u_0 g_h^e=0$. Dividing this last equation by $g_h^e$, we find that $\frac{f_h}{g_h}$ is an element of $\Frac D$ algebraic over $K$. Hence, it must be an element of $K$.
 \end{proof}
 
 \begin{rem}
 \label{remarkmember}
 If $D$ is graded, $K$ is algebraically closed in $\Frac D$, and we are interested in establishing  when a fraction $\frac{f}{g} $ with $f,g \in D$ is in $R$ or in $\overline{R}$, we can restrict to the case where $\deg(f) < \deg(g)$. Indeed, the containment $\overline{R} \subseteq W$ makes us sure that if $\frac{f}{g} \in R $, then $\deg(f) \leq \deg(g)$. If they have the same degree, then by Lemma \ref{hompart} we can write $f=h+f'$, $g=uh+g'$ where $u \in K^\times$ and $\deg (f) = \deg(h) > \max\{\deg(f'),\deg(g')\}$, and we can observe that in this case 
 $$ \frac{f}{g} \in R \Longleftrightarrow \frac{f}{g} - u^{-1} = \frac{f'-u^{-1}g'}{g} \in R.$$ This shows that we can restrict to the case where $\deg(f) < \deg(g)$.
 \end{rem}

The next lemma shows an important relation between the containment of primes of the form $\p_f$
and some asymptotic containment of principal ideals in $R$.
Given an ideal $I$, we denote its radical by $\sqrt{I}$.

   \begin{lemma}
\label{radicals}
 Let $D$ be an integral domain and $R=R(D)$.
Let $f,g  \in D$.
The following conditions are equivalent:
\begin{enumerate}
\item[$(1)$] $\p_g \subseteq \p_f$.
\item[$(2)$] $\sqrt{(\frac{1}{g})R} \subseteq \sqrt{(\frac{1}{f})R}$.
\item[$(3)$] There exists $e \geq 1$ such that $\frac{1}{g^e} \in (\frac{1}{f})R$ \rm(\it equivalently $ \frac{f}{g^e} \in R$\rm)\it.
\end{enumerate}
\end{lemma}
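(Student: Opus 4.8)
The plan is to funnel all three conditions through one ``dictionary'' statement: for $h \in D^\circ$ and any prime $\q$ of $R$, one has $\q \subseteq \p_h$ if and only if $\frac{1}{h} \notin \q$. The forward direction is trivial, since $\frac{1}{h} \notin \p_h$ by the very definition of $\p_h$. For the reverse direction I would argue as follows: the primes of $R$ avoiding $\frac{1}{h}$, ordered by inclusion, form a poset in which every chain has an upper bound (the union of a chain of primes avoiding $\frac{1}{h}$ is again a prime avoiding $\frac{1}{h}$); hence by Zorn's lemma any prime $\q$ with $\frac{1}{h} \notin \q$ is contained in a prime maximal with respect to avoiding $\frac{1}{h}$, and by the uniqueness clause of Proposition/Definition~\ref{pr:pxexists} that maximal prime is $\p_h$, so $\q \subseteq \p_h$. (Alternatively, the dictionary is immediate from the identity $R_{\p_h} = R[h] = R_{(1/h)}$ recorded just after Proposition/Definition~\ref{pr:pxexists}, since the primes surviving in a localization are exactly those disjoint from the inverted multiplicative set.)

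With the dictionary in hand, $(1) \Leftrightarrow (2)$ is pure generalities about radicals. I would rewrite $\sqrt{(\frac{1}{g})R} \subseteq \sqrt{(\frac{1}{f})R}$ as the reverse inclusion of vanishing loci, $V\bigl((\frac{1}{f})R\bigr) \subseteq V\bigl((\frac{1}{g})R\bigr)$, i.e.\ as the implication ``$\frac{1}{f} \in \q \Rightarrow \frac{1}{g} \in \q$'' for every prime $\q$ of $R$. Taking the contrapositive and applying the dictionary turns this into ``$\q \subseteq \p_g \Rightarrow \q \subseteq \p_f$ for every prime $\q$''; specializing to $\q = \p_g$ gives $\p_g \subseteq \p_f$, and the converse implication is trivial. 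So $(1)$ and $(2)$ are equivalent.

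The equivalence $(2) \Leftrightarrow (3)$ is then essentially the definition of a radical. For $(2) \Rightarrow (3)$: since $\frac{1}{g} \in \sqrt{(\frac{1}{g})R} \subseteq \sqrt{(\frac{1}{f})R}$, some power $\frac{1}{g^e} = (\frac{1}{g})^e$ (with $e \geq 1$) lies in $(\frac{1}{f})R$; writing $\frac{1}{g^e} = \frac{1}{f} \cdot r$ with $r \in R$ gives $r = \frac{f}{g^e} \in R$, which is the parenthetical reformulation. For $(3) \Rightarrow (2)$: $\frac{1}{g^e} \in (\frac{1}{f})R$ shows $\frac{1}{g} \in \sqrt{(\frac{1}{f})R}$, and since $\sqrt{(\frac{1}{g})R}$ is the smallest radical ideal containing $\frac{1}{g}$ while $\sqrt{(\frac{1}{f})R}$ is a radical ideal containing it, the inclusion follows. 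Throughout one should keep in mind that $f$ and $g$ are tacitly nonzero, as otherwise $\frac{1}{f}, \frac{1}{g}, \p_f, \p_g$ are undefined.

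I do not expect a serious obstacle here; the only real content is the dictionary lemma, and even that is a soft Zorn (or localization) argument resting directly on Proposition/Definition~\ref{pr:pxexists}. The two points that need care are getting the direction of the correspondence between radicals and vanishing loci right, and remembering that $\frac{1}{h} \notin \p_h$ (not $\in$), which is precisely what makes ``$\q \subseteq \p_h$'' and ``$\frac{1}{h} \in \q$'' complementary conditions on primes of $R$ and drives the contrapositive step in $(1) \Leftrightarrow (2)$.
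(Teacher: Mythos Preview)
Your proof is correct and follows the same route as the paper's, just in much greater detail. The paper's proof is two sentences---``(1)$\Leftrightarrow$(2) from the definition of radical as intersection of primes; (2)$\Leftrightarrow$(3) from the definition of radical in terms of powers''---and leaves the reader to supply exactly the dictionary you spell out (which is immediate from the identity $R_{\p_h}=R[h]$ recorded after Definition/Proposition~\ref{pr:pxexists}). Your write-up makes that implicit step explicit, but there is no difference in strategy.
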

 
 \begin{proof}
The equivalence between (1) and (2) follows from the definition of radical as intersection of prime ideals, while the equivalence between (2) and (3) follows from the definition of radical in terms of powers of elements.
 \end{proof}
 
We analyze now the case when $R/\p$ is a DVR. By Theorem \ref{thm:p=px}, such $\p$ must be of the form $\p_f$ for some $f \in D$.

\begin{lemma}
\label{lemdvr}
Let $D$ be an E-simple domain with maximal subfield $K$. Let $f \in D \setminus K$ be such that $ R/\p_f$ is a DVR in which the image of $\frac{1}{f}$ is irreducible. 
Then, for $g \in D$, we have that $\frac{1}{g} \not \in \p_f$ if and only if $g \in K[f]$. In particular,
 $R(D)=R(K[f])+\p_f$ and $R(K[f]) \cap \p_f=(0)$.  
\end{lemma}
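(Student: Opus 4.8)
The plan is to pin down the subring $L := \{x \in D^\circ \mid \tfrac1x \notin \p_f\} \cup \{0\}$ furnished by Theorem~\ref{thm:modp} as being exactly $K[f]$; the asserted equivalence and the two ring identities then follow at once. First I would dispatch the easy inclusion $K[f]\subseteq L$: by definition $\p_f$ avoids $\tfrac1f$, so $f\in L$; and for $u\in K^\times$ the element $\tfrac1u$ is a unit of $R(D)$, hence lies in no prime ideal, so $K\subseteq L$; since $L$ is a ring this gives $K[f]\subseteq L$. This already supplies one direction of the equivalence and reduces the lemma to proving $L\subseteq K[f]$.

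For the reverse inclusion I would first check that $L$ is again E-simple with maximal subfield $K$: we have $K\subseteq L$; and if $\ell\in L$, $\ell\neq 0$, is Egyptian in $L$, then $\ell\in R(L)\subseteq R(D)$, so $\ell$ is Egyptian in $D$, hence a unit of $D$ (as $D$ is E-simple), hence an element of $K$, hence a unit of $L$. Also $L$ is not a field, since it contains $f\notin K$ while its maximal subfield is $K$. By Theorem~\ref{thm:modp}, $R(L)\cong R(D)/\p_f$, which is a DVR by hypothesis; so the E-simple form of Theorem~\ref{thm:Euclconverse} yields $L\cong k[x]$ for some field $k$. A ring isomorphism carries maximal subfield onto maximal subfield, so $k\cong K$ and, writing $\theta$ for the preimage of $x$ under such an isomorphism, $L=K[\theta]$ is a polynomial ring over $K$ in one variable.

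Next I would exploit the irreducibility hypothesis. Since $f\in L=K[\theta]$, write $f=p(\theta)$ with $p\in K[t]$ and $m:=\deg p\geq 1$ (as $f\notin K$). Under the identification $R(L)=R(K[\theta])=K[\theta^{-1}]_{(\theta^{-1})}$ of Theorem~\ref{thm:Euclidean}, the normalized valuation — degree of denominator minus degree of numerator, measured in $\theta$ — sends $\tfrac1f=\tfrac1{p(\theta)}$ to $m$. The image of $\tfrac1f$ in $R(D)/\p_f\cong R(L)$ is assumed irreducible, i.e.\ a uniformizer of this DVR, forcing $m=1$. Hence $f=a\theta+b$ with $a\in K^\times$, so $\theta=a^{-1}(f-b)\in K[f]$ and therefore $L=K[\theta]\subseteq K[f]$. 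Combined with the first paragraph, $L=K[f]$, which is exactly the equivalence $\tfrac1g\notin\p_f\iff g\in K[f]$. For the remaining assertions, since $L=K[f]$, Theorem~\ref{thm:modp} says the composite $R(K[f])\hookrightarrow R(D)\onto R(D)/\p_f$ is an isomorphism; its kernel $R(K[f])\cap\p_f$ is therefore $(0)$, and surjectivity says every element of $R(D)$ differs from one of $R(K[f])$ by an element of $\p_f$, i.e.\ $R(D)=R(K[f])+\p_f$.

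The step I expect to be most delicate is the passage from the abstract isomorphism $L\cong k[x]$ to the concrete statement "$L=K[\theta]$ over the given copy of $K$": one must be sure this isomorphism respects the $K$ sitting inside $L$, so that the valuation computation in the third paragraph is really about $\tfrac1f\in R(L)$ and its image in $R(D)/\p_f$. This is precisely where the remark that ring isomorphisms preserve the maximal subfield does its work.
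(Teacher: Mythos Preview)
Your proof is correct and follows the same approach as the paper's: identify the subring $L$ of Theorem~\ref{thm:modp}, apply Theorem~\ref{thm:Euclconverse} to write $L$ as a polynomial ring in one variable over its maximal subfield $K$, then use irreducibility of the image of $\tfrac1f$ to force that variable to be (an affine translate of) $f$. Your derivation of the two ``in particular'' identities directly from the isomorphism $R(K[f])\xrightarrow{\sim} R(D)/\p_f$ of Theorem~\ref{thm:modp} is in fact slightly slicker than the paper's, which verifies each of them by an explicit computation.
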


\begin{proof}
If $g \in K[f]$, we get $\frac{1}{g} \not \in \p_f$ by Lemma \ref{lemalgdip}.
For the other implication, assume that $\frac{1}{g} \not \in \p_f$. By Theorem \ref{thm:modp}, \rm $  R/\p_f \cong R(L) $ where $L$ is the subring of $D$ consisting of all the elements $h \in D$ such that $\frac{1}{h} \not \in \p_f$. Hence, $g \in L$.
By  Theorem \ref{thm:Euclconverse}, $L=K[h]$ where $h$ is any element such that the image of $\frac{1}{h}$ generates the maximal ideal of $R/\p_f$. By the assumptions on $f$, we must have $L=K[f]$ and therefore $g \in K[f]$. 

To see that $R(D) = R(K[f]) + \p_f$, let $0\neq \alpha \in R(D)$ and write $\alpha=\sum_{i=1}^n \frac 1{g_i}$. We may rearrange the terms so that $\frac 1{g_i} \notin \p_f$ for $1\leq i \leq t$ and $\frac 1{g_i} \in \p_f$ for $t+1 \leq i \leq n$, for some $0 \leq t \leq n$. Then by the first paragraph, we have $g_i \in K[f]$ for each $i \leq t$, whence $\alpha \in R(K[f]) + \p_f$.

To see that $R(K[f]) \cap \p_f = 0$, let $\alpha \in R(K[f]) \cap \p_f$. If $\alpha\neq 0$, then by Theorem~\ref{thm:Euclidean}, we have $\alpha = u f^{-s}$, where $s \geq 0$ and $u$ is a unit of $R(K[f])$.  Then in the composition $R(K[f]) \ra R \ra R/\p_f$, the image of $u$ is a unit and the image of $f^{-s}$ is nonzero; thus the image of $\alpha$ is nonzero, which contradicts the assumption that $\alpha \in \p_f$.
\end{proof}

  \begin{thm}
 \label{irredcondition}
Let $D$ be an E-simple domain with maximal subfield $K$ algebraically closed in $\Frac D$.
Let $g \in D \setminus K$ and $R=R(D)$.
Consider the following conditions:
\begin{enumerate}
\item[$(1)$] $ R/\p_g$ is a DVR in which the image of $\frac{1}{g}$ is irreducible. 
\item[$(1^*)$] For every $e \geq 1$, $ G_{\overline{R}}(g^e) = \langle 1,g, g^2, \ldots, g^e \rangle_K. $
\item[$(1^{**})$] For every $e \geq 1$, $ G(g^e) = \langle 1,g, g^2, \ldots, g^e \rangle_K. $
  \item[$(1^{***})$] $K[g]$ is a regular factroid of $D$. 
\item[$(2)$] $\frac{1}{g}$ is irreducible in $R$.
\item[$(3)$] $ G(g) = \langle 1,g \rangle_K. $ 
\item[$(4)$] $K[g]$ is a factroid of $D$.
\item[$(4')$] For any irreducible polynomial $p \in K[T]$, $p(g)$ is irreducible in $D$.
\item[$(4'')$] $ g+u$ is irreducible for every $u \in K$.
\item[$(5)$] $g$ is irreducible in $D$.
\end{enumerate}
Then $(1)\Rightarrow(2)\Rightarrow(3)\Rightarrow(4'')\Rightarrow(5),$   $(1) \Rightarrow (4) \Rightarrow (4') \Rightarrow (4'')$,  and  $(1) \Leftrightarrow (1^*) \Leftrightarrow (1^{**})   \Leftrightarrow (1^{***})$. In general, there are no other nontrivial implications between these conditions. However, if $D$ is a UFD, $(4') \Rightarrow (4)$, and if $K$ is algebraically closed, then $(4'') \Rightarrow (4')$. 
 \end{thm}

 To summarize: \[
 \xymatrix{
&(1^{***}) \ar@{=>}@/^1pc/[dr] & & (2) \ar@{=>}[r]&(3) \ar@{=>}@/^/[dr]\\
(1^{**}) \ar@{=>}@/^1pc/[ur]& & (1) \ar@{=>}@/^1pc/[dl] \ar@{=>}@/^/[ur] \ar@{=>}@/_/[dr] & & & (4'') \ar@{=>}[r] & (5).\\
&(1^*) \ar@{=>}@/^1pc/[ul]& & (4) \ar@{=>}[r] &(4') \ar@{=>}@/_/[ur]
 }
 \]

 \begin{proof}
$ (1)\Rightarrow(2): $ 
Suppose that $\frac{1}{g}$ is reducible in $R$ and
write it as $$ \frac{1}{g} = \left( \frac{1}{f_1} + \ldots + \frac{1}{f_s} \right) \left( \frac{1}{h_1} + \ldots + \frac{1}{h_t} \right) $$ with $f_i, h_j \in D \setminus K$. Since $\frac{1}{g} \not \in \p_g$, at least one $\frac{1}{f_i}$ and one $\frac{1}{h_j}$ are also not in $\p_g$.  Thus, after rearranging elements, there exist positive integers $m,n$ such that $\frac 1{f_1}, \ldots, \frac 1{f_m} \notin \p_g$ and $\frac 1{h_1}, \ldots, \frac 1{h_n} \notin \p_g$, whereas $\frac 1{f_{m+1}}, \ldots, \frac 1{f_s} \in \p_g$ and $\frac 1{h_{n+1}}, \ldots, \frac 1{h_t} \in \p_g$.  It follows that 
\[ \frac{1}{g} - \sum_{i=1}^m \sum_{j=1}^n \frac 1{f_i h_j} \in \p_g
\]
whereas $\frac{1}{f_ih_j} \notin \p_g$ whenever $1\leq i \leq m$, $1 \leq j \leq n$. 

Applying Lemma \ref{lemdvr}, we obtain $f_i h_j\in K[g]$ for all $i \leq m$, $j \leq n$, and thus
$\displaystyle \frac{1}{g} - \displaystyle \sum_{i=1}^m \sum_{j=1}^n \frac{1}{f_ih_j} \in R(K[g]) \cap \p_g = (0)$, where the vanishing also holds by Lemma~\ref{lemdvr}. Going modulo $\p_g$, we have that the image of $ \frac{1}{g} $ has value 1 in the DVR $ R/\p_g$, but the image of each of the terms $ \frac{1}{f_ih_j} $ has value at least 2 since $f_i, h_j \in K[g] \setminus K$
This yields a contradiction since $\displaystyle \frac{1}{g} \equiv \sum_{i=1}^m \sum_{j=1}^n\frac{1}{f_ih_j}$ mod $\p_g$. \\

$ (2)\Rightarrow(3): $ Pick $f \in G(g) \setminus K$. Hence, $ \frac{f}{g} \in R(D) $ and $ \frac{1}{g} = \frac{1}{f} \cdot \frac{f}{g}.  $ Since $ \frac{1}{g} $ is irreducible, we must have that $ \frac{f}{g} $ is a unit in $R(D)$. Hence, there exists a nonzero $u \in K$ such that $ \frac{f-ug}{g} = \frac{f}{g} - u \in \m. $ We then obtain $ \frac{1}{g} = \frac{1}{f-ug} \cdot \frac{f-ug}{g}.  $ The irreducibility of $ \frac{1}{g} $ forces $ \frac{1}{f-ug} $ to be a unit in $R(D)$, and therefore $f-ug \in K$. It follows that $f \in \langle 1,g \rangle_K.$ \\

$ (3)\Rightarrow(4''): $ It is clear since every proper factor of $g+u$ would be an element of the factroid $ G(g) $ that is not in $ \langle 1,g \rangle_K. $ \\

$ (4'')\Rightarrow(5): $ It is straightforward. \\

$ (1)\Rightarrow(1^*): $ Pick $f \in G_{\overline{R}}(g^e)$. We can consider an equation of integral dependence of $\frac{f}{g^e}$ over $R$, of degree $N$ for some $N \geq 1$.
Dividing such equation by $f^{N-1}$, we obtain  $\frac f{g^{eN}} \in R$.  By Lemma \ref{radicals}, it follows that $\p_g=\p_{g^{eN}} \subseteq \p_f$. Using the fact that $\dim R/\p_g =1$, we have two possibilities: if $\p_f = \m$, then $f \in K$ and the statement follows immediately. If $\p_f = \p_g \subsetneq \m$, then by Lemma \ref{lemdvr} we obtain $f \in K[g]$. Thus 
$ \frac{f}{g^e} \in \overline{R} \cap K(g) = R(K[g])$ (recall that $ R(K[g]) $ is a DVR contained in $R$ and $g \not \in \overline{R}$). The ring $A:=K[g]$ is graded and we have that $ \frac{f}{g^e} \in R(K[g])$ if and only if $\deg_A(f) \leq \deg_A(g^e)=e$. It follows that $f \in \langle 1,g, \ldots, g^e \rangle_K$. \\

$(1^*) \Rightarrow (1^{**})$: This is obvious. \\

$(1^{**}) \Rightarrow (1^{***})$: We have that $K[g] = \bigcup_e G(g^e)$ is a regular factroid of $D$ by Theorem~\ref{5}.\\

$(1^{***}) \Rightarrow (1)$: Since any regular factroid of $D$ containing $K[g]$ must contain $G(g^e)$ for every $e\in \N$, we have $K[g] = \bigcup_e G(g^e)$.  By Theorem~\ref{thm:Euclidean}, $R(K[g])$ is a DVR in which the image of $1/g$ is irreducible. But by Theorem~\ref{5}, the composition $R(L(\p_g)) \rightarrow R \ra R/\p_g$ is an isomorphism, and $L(\p_g) =  \bigcup_e G(g^e) = K[g]$, finishing the proof of (1).\\

$(1) \Rightarrow (4)$: It is obvious that $(1^{***}) \Rightarrow (4)$, and we showed above that $(1) \Leftrightarrow (1^{***})$.\\

$(4) \Rightarrow (4')$: Let $p\in K[T]$ be irreducible.  Let $c,d \in D$ such that $cd = p(g)$.  Since $p(g) \in K[g]$, which is a factroid of $D$, we have $c,d \in K[g]$.  Write $c=q(g)$, $d=r(g)$ for $q,r \in K[T]$.  Since the map $K[T] \ra D$ sending $T \mapsto g$ is injective (else $K$ would not be algebraically closed in $\Frac D$), it follows that $p=qr$ in $K[T]$.  By irreducibility, either $q\in K$ (in which case $c\in K$) or $r\in K$ (in which case $d\in K$).  Thus, $p(g)$ is irreducible in $D$.\\

$(4') \Rightarrow (4'')$: This follows because any linear polynomial in $K[T]$ is irreducible.\\

When $K$ is algebraically closed, $(4'') \Rightarrow (4')$: This is because every irreducible polynomial is of the form $T+u$.\\

When $D$ is a UFD, $(4') \implies (4)$: Let $c,d \in D$ with $cd \in K[g]$. Then there is some $p\in K[T]$ with $cd=p(g)$.  Write $p=q_1 \cdots q_r$, where each $q_i\in K[T]$ is an irreducible polynomial.  By assumption, $q_i(g)$ is irreducible in $D$ for all $i$. We have $cd = \prod_{i=1}^r q_i(g)$, so since $D$ is factorial with all units in $K$, after rearranging terms we may assume there is some $0\leq s \leq r$ and some $\lambda \in K^\times$ with $c = \lambda \prod_{i=1}^s q_i(g)$ and $d=\lambda^{-1}\prod_{i=s+1}^r q_i(g)$. Thus, $c, d\in K[g]$, completing the proof that $K[g]$ is a factroid of $D$.
 \end{proof}

 \begin{cor}
   \label{cor:elementsofK[X,Y]}
   Let $g \in (x,y)K[x,y] $ be an irreducible polynomial such that $\p_g \neq (0)$ in $R(K[x,y])$. Then $g$ satisfies all the conditions listed in Theorem \ref{irredcondition}. 
 \end{cor}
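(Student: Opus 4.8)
The plan is to reduce the entire statement to verifying condition~$(1)$ of Theorem~\ref{irredcondition}. Indeed, $D=K[x,y]$ is E-simple with maximal subfield $K$, which is algebraically closed in $\Frac D = K(x,y)$, and $g\in (x,y)K[x,y]\subseteq D\setminus K$, so the hypotheses of Theorem~\ref{irredcondition} are met; that theorem supplies the implications $(1)\Rightarrow(2)\Rightarrow(3)\Rightarrow(4)\Rightarrow(5)$ together with the equivalences $(1)\iff(1^*)\iff(1^{**})$. Hence it suffices to establish condition~$(1)$ for $g$, i.e.\ that $R/\p_g$ is a DVR in which the image of $1/g$ is irreducible.

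First I would determine $\hgt \p_g$. Since $g$ is a nonzero nonunit lying in $(x,y)K[x,y]$, we have $g\notin K$, hence $\p_g\neq\m$ by the description of $\m$ recalled at the start of this subsection (E-simplicity of $K[x,y]$ together with Theorem~\ref{thm:islocal}). Combined with the hypothesis $\p_g\neq(0)$ and the facts that $R$ is local with $\dim R=2$, the chain $(0)\subsetneq\p_g\subsetneq\m$ forces $\hgt\p_g=1$. Then Theorem~\ref{quotientdim2} applies and $R/\p_g$ is a DVR. To pin down the image of $1/g$, I would re-run the argument in the proof of Theorem~\ref{quotientdim2} with the given irreducible $g$ playing the role of its $f$ (no reduction step is needed, since $g$ is already irreducible and in $(x,y)$): this yields $L:=\{h\in K[x,y]\mid 1/h\notin\p_g\}=K[g]$, and by Theorem~\ref{thm:modp} the composite $R(K[g])\into R\onto R/\p_g$ is an isomorphism carrying the generator $1/g$ of $R(K[g])$ to the image of $1/g$ in $R/\p_g$. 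Since $g$ is transcendental over $K$, I may identify $R(K[g])$ with $R(K[t])$ for an indeterminate $t$ via $g\leftrightarrow t$, and by Theorem~\ref{thm:Euclidean} this ring equals $K[t^{-1}]_{(t^{-1})}$, whose maximal ideal is generated by $t^{-1}$. Thus the image of $1/g$ generates the maximal ideal of the DVR $R/\p_g$, so it is irreducible there; this is condition~$(1)$, and the corollary follows.

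I do not expect a serious obstacle: the proof is essentially a careful assembly of Theorems~\ref{thm:Euclidean}, \ref{thm:modp}, \ref{quotientdim2}, and~\ref{irredcondition}. The one point that needs real attention is tracking the isomorphism $R/\p_g\cong R(K[g])$ precisely enough to conclude that the image of $1/g$ is an actual uniformizer, not merely a nonzero nonunit, since condition~$(1)$ demands \emph{irreducibility} of that image; this is exactly where the explicit description $R(K[x])=K[x^{-1}]_{(x^{-1})}$ from Theorem~\ref{thm:Euclidean} is indispensable.
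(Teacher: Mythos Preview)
Your proposal is correct and follows the same route as the paper, which simply writes ``Combine Theorem~\ref{quotientdim2} and Theorem~\ref{irredcondition}.'' You have merely unpacked the implicit steps---checking $\hgt\p_g=1$, and tracing through the proof of Theorem~\ref{quotientdim2} with the already-irreducible $g$ to see that $L=K[g]$ and hence that the image of $1/g$ is a uniformizer---which is exactly what a reader would do to fill in that one-line proof.
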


 \begin{proof}
     Combine Theorem \ref{quotientdim2} and Theorem \ref{irredcondition}.
 \end{proof}

 We   finish the proof of Theorem~\ref{irredcondition} by providing  a list of examples to show that   no other implications hold in general.
 
 \begin{example}
   $(5) \not\Rightarrow (4'')$: 
  The polynomial $f=xy+x+y \in D=K[x,y]$ is irreducible, but $xy+x+y+1 = (x+1)(y+1)$ is reducible. By Lemma \ref{lemma:associated}, we have 
 \[  \frac{1}{xy+x+y} = \theta \cdot \frac{1}{x} \cdot \frac{1}{y} \]
 where $\theta$ is a unit in $R(D)$. Thus $\frac 1 f$ is a reducible element of $R(D)$. 
 \end{example}

 \begin{example}  $(4'') \not\Rightarrow (3)$:  Consider $f = x^3+y^2+x^4y \in D=K[x,y]$.
 The polynomial $f + u $ is irreducible for every $u \in K$ (one can observe this by expressing it as a polynomial of degree 2 in the variable $y$). However,
 \[ \frac{1}{x^3+y^2+x^4y}= \frac{1}{y} \cdot \frac{1}{y+x^4} \left( \frac{1}{1 + \frac{x^3}{y^2+x^4y}}  \right).   \]
 We claim that the rightmost factor in the above formula is a unit in $R= R(D)$.  To see this, since $y,y+x^4 \in [y(y+x^4)]_D$, it follows that $x^4$, and hence its factor $x^3$, is also in $[y(y+x^4)]_D$. Thus,  $ \frac{x^3}{y^2+x^4y} \in R(D)$ by Theorem~\ref{thm:factroids}.  Then by the discussion following Definition~\ref{def:GAg}, since $\deg(x^3) = 3<5=\deg(y(y+x^4))$, it follows that $\frac{x^3}{y^2+x^4y} \in \m$, where $\m$ is the maximal ideal of $R$.  Then by Corollary~\ref{cor:units}, it follows that $\theta := 1+\frac{x^3}{y^2+x^4y}$ is a unit of $R$.  Thus, $\frac{y}{x^3 + y^2 + x^4y} = \theta^{-1} \cdot \frac 1{y+x^4}\in R$, so that by Theorem~\ref{thm:factroids}, $y \in G(f) \setminus \langle 1, f \rangle_K$.

 If we take $K$ to be algebraically closed, it also follows that (4) $\not \Rightarrow$ (3).
 \end{example}

\begin{example}  $(3) \not\Rightarrow (2)$:  Let $\kappa$ and $K$ be fields and let us use the symbols $x,y,z,x_1,x_2,x_3,x_4$ to denote some indeterminates.

Let $$ A= \kappa[x,y]\left[ \frac{xy}{x+y} \right] \cong \frac{\kappa[x,y,z]}{((x+y)z-xy)}.$$ Notice that setting $\deg(x)= \deg(y)= \deg(z) = 1$, $A$ is a graded $\kappa$-algebra where the degree zero component is just $\kappa$. Hence, $A$ is E-simple by Proposition \ref{pr:gradedarenotegyptian}.
The component of $A$ of degree one is given by $\langle x,y,z \rangle_K$.
By Remark \ref{remarkmember}, for every element of the form $\ell = u_0 + u_1 x + u_2y$ with $u_j \in \kappa$, we have $\frac{\ell}{z} \in R(A)$ if and only if $u_1,u_2=0$.

Consider now the $K$-algebra \begin{gather*}
    D= K[x_1,x_2,x_3,x_4]\left[ \frac{x_1x_2x_3x_4}{(x_1+x_2)(x_3+x_4)} \right] \cong \\ \frac{K[x_1,x_2,x_3,x_4,z]}{((x_1+x_2)(x_3+x_4)z-x_1x_2x_3x_4)}.
\end{gather*} Also $D$ is graded by setting $\deg(x_i)=1$ for $i=1,2,3,4$, and $\deg(z)=2$. Its degree zero component is $K$ and therefore also $D$ is E-simple.
In $R=R(D)$, we clearly have the factorization
$$ \frac{1}{z} = \left( \frac{1}{x_1} + \frac{1}{x_2}\right) \left( \frac{1}{x_3} + \frac{1}{x_4}\right)  $$ which makes $\frac{1}{z}$ not irreducible.
However, we claim that $G(z)= \langle 1, z \rangle_K$. By Remark \ref{remarkmember}, it is enough to show that no nonzero element of degree one of the form $\ell = \sum_{i=1}^4 u_j x_j $ with $u_j \in K$ is in $G(z)$.
Suppose some such $\ell \in G(z)$ and let $S= K[x_3,x_4]^{\circ}$. 
By Lemma \ref{lem:localizations} we have that
$$ T:= R(D)[x_3,x_4]= R \left( D \left[ \frac{1}{x_3}, \frac{1}{x_4} \right] \right)= R(S^{-1}D) \cong R(A),   $$ where the last isomorphism is obtained setting $x=x_1$, $y=x_2$, $\kappa=K(x_3,x_4)$, and rescaling $z$ by a unit. Since $G(z) \subseteq G_{T}(z)$, it follows that $u_1,u_2=0$. Similarly, we can consider $T'=R(D)[x_1,x_2]$ to obtain that $u_3,u_4=0$, which finishes the proof of the claim.
\end{example}

\begin{example}
$(2) \not\Rightarrow (1)$:  Consider the numerical semigroup algebra $D=K[x^2,x^3]$. This ring is E-simple with maximal subfield $K$. By \cite[Example 4.9]{Gu-recomp}, its reciprocal complement is $R(D)= K[x^{-2},x^{-3}]_{(x^{-2},x^{-3})}$, which is a Noetherian local one-dimensional domain, but is not regular.  We have \[x^3 \in [x^6]_D \subseteq G(x^6) = G((x^2)^3) \setminus \langle 1,(x^2), (x^2)^2, (x^2)^3\rangle_K,\] since $(x^3)^2 = x^6$, so (1**) is violated for $g=x^2$.
  However, the element $ \frac{1}{x^2} $ is irreducible in $R(D)$, so (2) holds for $g=x^2$.

Now let $K:=\F_7$ and $p(T):=T^3 - 4 \in K[T]$.  Since $4$ is not a cube mod $7$, $p$ does not have a root in $\F_7$, so that since $p$ has degree 3, it must be irreducible.  However, $p(x^2) = x^6-4 = (x^3-2)(x^3+2)$ in $D=K[x^2, x^3]$, so $x^2\in D$ does not satisfy (4$'$).  Thus, $(2) \not \Rightarrow (4')$.
\end{example}

\begin{example}
$(4') \not \Rightarrow (4)$:  Let $D = K[x,y,z]/(xy-z^2)$, with $K$ an algebraically closed field.  This is an E-simple domain since it is positively graded.  Moreover, for any $u\in K$, $z+u$ is irreducible in $D$.   However, $xy \in K[z]$ with $x \notin K[z]$ and $y \notin K[z]$, so $K[z]$ is not a factroid of $D$.  That is, $g=z$ satisfies (4$''$) (and thus (4$'$), since $K$ is algebraically closed), but not (4).
\end{example}

\begin{rem}
    Despite the fact that the implications $(1) \Rightarrow (2) \Rightarrow (3)$   and $(4') \Rightarrow (4'')$  are in general not reversible, we have not being able to find counterexamples for elements in a unique factorization domain. Whether they are equivalent or not over UFDs is still an open question.
\end{rem}

\subsection{A duality between regular factroid subrings and prime ideals}\label{sub:duality}
In this subsection, we establish a Galois correspondence (see e.g. \cite[Section 6.5]{Berg-genubook}) between the prime ideals of $R(D)$ and the subrings of $D$ that are regular factroids of $D$.  This strengthens Theorem~\ref{thm:p=px} by associating any prime ideal of $R(D)$ to a particular subset of $D$, even for primes $\p$ of $R(D)$ that have infinite Krull dimension.  This also puts Theorem~\ref{thm:modp} into clearer perspective.

\begin{defn}\label{def:theconnection}
For any multiplicative subset $W \subseteq D$, set $\p(W) := \displaystyle\bigcap_{f \in W^\circ} \p_f \subseteq R(D)$.

For any $S \subseteq R(D)$ that is a union of prime ideals, set $L(S) := \{0\neq x \in D \mid \frac 1x \notin S\} \cup \{0\} \subseteq D$.
\end{defn}

\begin{prop}\label{1}
    Let $W$ be a multiplicative submonoid of $D$ and $0\neq x \in D$.  Then \begin{enumerate}
        \item\label{it:pWprime} We have $\p(W) \in \Spec R(D)$, and
        \item\label{it:pWGW} $\frac 1x \notin \p(W)$ if and only if $x\in G(W)$.
    \end{enumerate}
\end{prop}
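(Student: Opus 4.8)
The plan is to treat the two parts separately, in each case leaning on the concrete description $R(D)_{\p_f}=R(D)[f]=R(D[1/f])$ recorded right after Definition/Proposition~\ref{pr:pxexists}. The one bookkeeping fact underlying everything is: for $0\neq f\in D$ and $0\neq x\in D$, one has $\frac1x\notin\p_f$ if and only if $x\in R(D)[f]$. This is immediate once one notes that $R(D)[f]=R(D[1/f])$ is itself a reciprocal complement, hence local by Theorem~\ref{thm:islocal}, so that $\p_f$ is precisely the set of non-units of $R(D)[f]$ lying in $R(D)$; and for $0\neq x\in D\subseteq D[1/f]$ the element $\frac1x$ is a generator of $R(D[1/f])$, so it is a unit there exactly when its inverse $x$ lies in $R(D[1/f])$.

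For part (\ref{it:pWprime}), I first observe that $\p(W)$ is a proper ideal: it is an intersection of the proper ideals $\p_f$ over the nonempty set $W^\circ$ (nonempty since $1\in W$). The crux is that $\{\p_f\mid f\in W^\circ\}$ is \emph{downward directed}. Given $f,g\in W^\circ$ we have $fg\in W^\circ$ by multiplicativity, and I claim $\p_{fg}\subseteq\p_f\cap\p_g$. By symmetry it suffices to see $\p_{fg}\subseteq\p_f$, and for this I will show $R(D)[f]\subseteq R(D)[fg]$: writing $R(D)[fg]=R(D[1/(fg)])$, the element $fg$ is the reciprocal of $1/(fg)\in D[1/(fg)]$, while $\frac1g\in R(D[1/(fg)])$, so $f=(fg)\cdot\frac1g\in R(D)[fg]$. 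Consequently every unit of $R(D)[f]$ is a unit of $R(D)[fg]$, and intersecting the maximal ideals down to $R(D)$ gives $\p_{fg}\subseteq\p_f$. Since a nonempty downward-directed intersection of prime ideals is again prime (a one-line diagram chase using directedness), $\p(W)\in\Spec R(D)$.

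For part (\ref{it:pWGW}), the bookkeeping fact turns $\frac1x\notin\p(W)$ into: $x\in R(D)[f]$ for some $f\in W^\circ$. So the task is to prove this equivalent to $x\in G(W)$, and here I will use the description $G(W)=\bigcup_{d\in D^\circ}([dW]_D:_D d)$ together with Theorem~\ref{thm:factroids}. For the forward direction, write $x=\sum_{k=1}^m f^{j_k}/e_k$ with $e_k\in D^\circ$ and $j_k\ge 0$; putting $d=\prod_k e_k$ and $N=\max_k j_k$, one gets $dx=\sum_k f^{j_k}(d/e_k)$, and since $f^Nd=\bigl(f^{j_k}(d/e_k)\bigr)\bigl(f^{N-j_k}e_k\bigr)$ each summand is a factor of $f^Nd$; hence $dx\in[f^Nd]_D\subseteq[dW]_D$ (as $f^Nd\in dW$), so $x\in([dW]_D:_D d)\subseteq G(W)$. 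For the converse, suppose $dx\in[dW]_D$. Because each element of $[dW]_D=\bigcup_nF_n(dW)$ is assembled from only finitely many elements of $dW$, we have $dx\in[\{dw_1,\dots,dw_r\}]_D$ with $w_i\in W$ (and we may discard any $w_i=0$, so that $w:=w_1\cdots w_r\in W^\circ$); each $dw_i$ divides $dw$, hence $[\{dw_1,\dots,dw_r\}]_D\subseteq[dw]_D$ and $dx\in[dw]_D$; Theorem~\ref{thm:factroids} then gives $\frac xw\in R(D)$, so $x=w\cdot\frac xw\in R(D)[w]$ with $w\in W^\circ$, as needed.

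I expect the genuine content to sit in part (\ref{it:pWprime}): the inclusion $R(D)[f]\subseteq R(D)[fg]$, i.e.\ $\p_{fg}\subseteq\p_f$, is the one non-formal step, and one should be careful not to overreach here, since the stronger-looking assertion $\frac1{fg}\notin\p_f$ is \emph{false} in general. In part (\ref{it:pWGW}) the only place demanding care is the finiteness argument for iterated factroid generation in the converse direction; the rest is a transparent translation through Theorem~\ref{thm:factroids}.
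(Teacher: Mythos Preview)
Your proof is correct and follows essentially the same route as the paper's. Part~(\ref{it:pWprime}) is identical in substance: both arguments reduce to $\p_{fg}\subseteq\p_f\cap\p_g$ and use multiplicative closure of $W$. For part~(\ref{it:pWGW}), the paper translates ``$\tfrac1x\notin\p_f$'' via Lemma~\ref{radicals} (i.e., $\tfrac1{f^e}\in\tfrac1xR$ for some $e$) and then does a direct computation, whereas you translate via the identification $R(D)_{\p_f}=R(D)[f]$ (your ``bookkeeping fact'') and then invoke Theorem~\ref{thm:factroids}; these are equivalent devices. Your converse is in fact slightly more careful than the paper's: the paper writes an element of $[dW]_D$ directly as a sum of factors of elements of $dW$ (tacitly at the $F_1$ level), while you explicitly use finiteness of factroid generation and multiplicativity of $W$ to reduce to a single $[dw]_D$ before applying Theorem~\ref{thm:factroids}.
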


\begin{proof}
(\ref{it:pWprime}): It is clear that $\p(W)$ is an ideal of $R$.  To see that it is prime, let $\alpha, \beta \in R \setminus \p(W)$.  Then there exist $f,g \in D^\circ$ with $\alpha \notin \p_f$ and $\beta \notin \p_g$.  As it is clear that $\p_{fg} \subseteq \p_f \cap \p_g$, it follows that $\alpha \beta \notin \p_{fg}$.
But $fg\in W$ since the latter is multiplicatively closed, so $\alpha \beta \notin \p(W)$. \\

(\ref{it:pWGW}): First suppose $\frac 1x \notin \p(W)$.  Then there is some $f\in W$ with $\frac 1x \notin \p_f$, so by Lemma~\ref{radicals}, there is some $e\in \N$ with $\frac 1{f^e} \in \frac 1x R(D)$.  Write \[
\frac 1{f^e} = \frac 1x \left( \sum_{i=1}^\ell \frac 1{d_i}\right), \quad d_i \in D^\circ.
\]
Setting $d := \prod_{i=1}^\ell d_i$, and for each $i$, $\widehat{d_i} := d/d_i \in D$, we have $dx = \sum_i \widehat{d_i} f^e$.  Since each $\widehat{d_i}$ is a factor of $d$, we have $dx \in [df^e]_D$.  Thus, $x \in G(f^e) \subseteq G(W)$.

Conversely, suppose $x\in G(W)$.  Then for some $d\in D^\circ$, $dx \in [dW]_D$. That is, $dx = \sum_{i=1}^\ell c_i$, $c_i b_i =d w_i$ for $b_i, c_i \in D^\circ$ and $w_i \in W$.  Hence, $x=\sum_{i=1}^\ell w_i / b_i$, and without loss of generality each $w_i\neq 0$.  Setting $w = \prod_i w_i$, and for each $i$, $\widehat{w_i} := w/w_i \in W$, it follows that \[
\frac 1w = \frac 1x \left(\sum_{i=1}^\ell \frac 1 {b_i \widehat{w_i}}\right) \in \frac 1x R(D).
\]
Hence, $\frac 1x \notin \p_w$, whence $\frac 1x \notin \p(W)$.
\end{proof}

\begin{prop}\label{4}
For any union $S = \bigcup_{\q \in X} \q$ of primes of $R(D)$, $L(S)$ is both a subring and a regular factroid of $D$.
\end{prop}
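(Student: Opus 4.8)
The plan is to treat the two assertions separately, reducing each to results already established.

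\textbf{$L(S)$ is a subring of $D$.} First I would note that $\frac 1x \notin S = \bigcup_{\q \in X}\q$ precisely when $\frac 1x \notin \q$ for every $\q \in X$, so that $L(S) = \bigcap_{\q \in X} L(\q)$. By Theorem~\ref{thm:modp} each $L(\q)$ is a subring of $D$, and an arbitrary intersection of subrings of $D$ is a subring of $D$; hence so is $L(S)$. (A self-contained check is just as quick: $1 \in L(S)$ because every $\q$ is a proper ideal; $L(S)$ is closed under products since $\frac 1{xy} = \frac 1x\cdot\frac 1y$ and each $\q$ is prime; and it is closed under differences because $\frac{x-y}{xy} = \frac 1y - \frac 1x \in R(D)$ and $\frac 1{xy} = \frac{x-y}{xy}\cdot\frac 1{x-y}$, so $\frac 1{x-y}\in\q$ would force $\frac 1{xy}\in\q$, contradicting primeness.)

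\textbf{$L(S)$ is a regular factroid of $D$.} The strategy is to realize $L(S)$ as a regular factroid of the shape $G(W)$. Set $W := L(S)\setminus\{0\}$; by the previous step this is a multiplicative submonoid of $D$, so the prime $\p(W) = \bigcap_{f\in W^\circ}\p_f$ of Definition~\ref{def:theconnection} is available and is prime by Proposition~\ref{1}(\ref{it:pWprime}). The crucial point is the containment $S\subseteq\p(W)$: fix $\q\in X$ and $f\in W^\circ$; since $\frac 1f\notin\q$, the prime $\q$ is disjoint from $\{(\frac 1f)^n : n\geq 0\}$, and since $R(D)[f] = R(D)[(\frac 1f)^{-1}]$ is simultaneously the localization of $R(D)$ at that set and equal to $R(D)_{\p_f}$ (the remark following Definition/Proposition~\ref{pr:pxexists}), the correspondence between primes of a localization and primes of the base disjoint from the inverted set gives $\q\subseteq\p_f$; intersecting over $f\in W^\circ$ yields $\q\subseteq\p(W)$, and taking the union over $\q\in X$ yields $S\subseteq\p(W)$. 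Now I claim $L(S) = G(W)$. If $0\neq x\in L(S)$ then $x\in W\subseteq G(W)$, and $0\in G(W)$, so $L(S)\subseteq G(W)$. Conversely, if $0\neq x\in G(W)$ then $\frac 1x\notin\p(W)$ by Proposition~\ref{1}(\ref{it:pWGW}), hence $\frac 1x\notin S$ since $S\subseteq\p(W)$, so $x\in L(S)$; together with $0\in L(S)$ this gives $G(W)\subseteq L(S)$. Therefore $L(S) = G(W)$, which is a regular factroid of $D$ by Definition~\ref{def:regularfactroid}.

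The only step that is not purely formal is the implication $\frac 1f\notin\q \Rightarrow \q\subseteq\p_f$, which I expect to be the main, if minor, obstacle: it relies on recognizing $R(D)[f]$ as the localization of $R(D)$ inverting $\frac 1f$ — legitimate precisely because $\frac 1f$ already lies in $R(D)$ — together with the standard bijection between primes of a localization and primes of the base ring disjoint from the inverted set. Granting that, the subring assertion is Theorem~\ref{thm:modp} together with closure of subrings under intersection, and, once $L(S) = G(W)$ is in hand, the regular-factroid assertion is nothing more than the definition of $G(W)$.
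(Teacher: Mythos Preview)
Your proof is correct. The subring argument via $L(S)=\bigcap_{\q\in X}L(\q)$ and Theorem~\ref{thm:modp} is valid, and for regularity your key containment $S\subseteq\p(W)$ (from $\tfrac1f\notin\q\Rightarrow\q\subseteq\p_f$, using $R(D)[f]=R(D)_{\p_f}$) together with Proposition~\ref{1}(\ref{it:pWGW}) cleanly gives $L(S)=G(W)$, hence a regular factroid.

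The paper's route is organized differently. It checks the subring and factroid properties by hand (closure under sums via $\tfrac1{xy}=\tfrac1{x+y}(\tfrac1x+\tfrac1y)$, closure under factors via primality), and then proves regularity by taking $x\in G(L(S))$, using Proposition~\ref{1} to get some $f\in L(S)$ with $\tfrac1x\notin\p_f$, invoking Lemma~\ref{radicals} to obtain $\tfrac1f\in\sqrt{\tfrac1x R}$, and deriving a contradiction from $\tfrac1x\in S$. Both arguments ultimately hinge on the implication ``$\tfrac1f\notin\q\Rightarrow\q\subseteq\p_f$'': you extract it from the localization identity following Definition/Proposition~\ref{pr:pxexists}, whereas the paper reaches the equivalent conclusion through the uniqueness of $\p_x$ and Lemma~\ref{radicals}. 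Your packaging is arguably tidier---once $S\subseteq\p(W)$ is established, the identification $L(S)=G(W)$ gives the factroid and regular conditions simultaneously, and reusing Theorem~\ref{thm:modp} avoids repeating the subring verification---while the paper's proof is more self-contained, not appealing to localization theory. A minor cosmetic point: since $W=L(S)\setminus\{0\}$ already excludes $0$, the symbol $W^\circ$ is redundant.
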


\begin{proof}
Let $x, y \in L(S) \setminus \{0\}$.  Then $\frac 1x, \frac 1y \notin S$, so since $R \setminus S$ is multiplicatively closed, $\frac 1{xy} \notin S$. Thus, $xy \in L(S)$.  Furthermore, suppose $x+y \neq 0$.  Then $\frac 1{xy} = \frac 1{x+y} \left( \frac 1x + \frac 1y\right)$. Since $R \setminus S$ is a saturated multiplicative set and $\frac 1{xy} \notin S$, it then follows that $\frac 1{x+y} \notin S$, so that $x+y \in L(S)$.  Since also $1=\frac 11 \notin S$, so that $1\in L(S)$, this completes the subring claim.

To see that it is a factroid, let $c,d \in D^\circ$ such that $cd \in L(S)$.  Then for all $\q \in X$, $\frac 1c \frac 1d = \frac 1{cd} \notin \q$.  Thus $\frac 1d \notin \q$ for all $\q$, so $\frac 1d \notin S$, whence $d\in L(S)$.

Finally, we show that the factroid in question is regular.  Let $0\neq x \in G(L(S))$.  Then $\frac 1x\notin \p(L(S))$ by Proposition~\ref{1}, so there is some $0 \neq f\in L(S)$ such that $\frac 1x \notin \p_f$. But then by Lemma~\ref{radicals}, $\frac 1f \in \sqrt{\frac 1x R(D)}$.  Since $f\in L(S)$, we have $\frac 1f \notin S$, so $\frac 1f \in \sqrt{\frac 1x R} \setminus S$, whence $\sqrt{\frac 1x R} \nsubseteq S$.  If $\frac 1x\in S$, then there is some $\q \in X$, $\frac 1x \in \q$, so $\sqrt{\frac 1x R} \subseteq \q \subseteq S$, a contradiction. Thus, $\frac 1x \notin S$, so $x\in L(S)$.  Thus, $L(S) = G(L(S))$, completing the proof of regularity.
\end{proof}

\begin{thm}\label{5}
Let $D$ be an integral domain and $R := R(D)$. Let $\cM = $ the poset of multiplicative submonoids of $D$, and $\cU = $ the poset of unions of prime ideals of $R$.  The functions $\p: \cM \ra \cU$ and $L: \cU \ra \cM$ and $L(-)$ from Definition~\ref{def:theconnection} satisfy the following properties: \begin{enumerate}[(a)]
    \item\label{it:containments} For any $S \in \cU$ and $W \in \cM$, we have $W \subseteq L(S) \iff S \subseteq \p(W)$.
    \item\label{it:gc} We have an \emph{antitone Galois connection} between $\p(-)$ and $L(-)$.
    \item\label{it:LpG} For any $W \in \cM$, $L(\p(W)) = G(W)$.
    \item\label{it:LpfGfe} For any $f \in D^\circ$, $L(\p_f) = \bigcup_{e\in \N} G(f^e)$.
    \item\label{it:pLq} For any $\q \in \Spec R$, we have $\p(L(\q))=\q$.
    \item\label{it:LpC} For any regular factroid subring $C$ of $D$, we have $C = L(\p(C))$.
\end{enumerate}
Hence, $(\p(-), L(-))$ restricts to a \emph{Galois correspondence} (and hence, a one-to-one order-reversing correspondence) between the \emph{regular factroid subrings} $C$ of $D$ and the \emph{prime ideals} $\q$ of $R$.

Moreover, if $(C,\q)$ is such a pair, then the induced composition of natural ring homomorphisms $R(C) \ra R \ra R/\q$ is an isomorphism.
\end{thm}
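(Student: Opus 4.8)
The plan is to establish the six labelled properties (a)--(f) roughly in the order stated, then obtain the restricted one-to-one correspondence from the standard machinery of antitone Galois connections, and finally read off the last isomorphism directly from Theorem~\ref{thm:modp}. All of the genuine content sits in (a); once it is available, (b) is purely formal. To prove $W\subseteq L(S)\iff S\subseteq\p(W)$, I would unwind both sides: $W\subseteq L(S)$ amounts to $\frac1f\notin S$ for every $f\in W^\circ$, while $S\subseteq\p(W)=\bigcap_{f\in W^\circ}\p_f$ amounts to $S\subseteq\p_f$ for every $f\in W^\circ$, so it suffices to show, for each fixed $f\in D^\circ$, that $\frac1f\notin S\iff S\subseteq\p_f$. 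One direction is immediate because $\frac1f\notin\p_f$. For the other, write $S=\bigcup_{\q}\q$; each such prime $\q$ avoids $\frac1f$, and since $\p_f$ is the \emph{largest} prime of $R$ not containing $\frac1f$ (by Definition/Proposition~\ref{pr:pxexists} and the remark following it, using that reciprocal complements are local, Theorem~\ref{thm:islocal}), we get $\q\subseteq\p_f$, hence $S\subseteq\p_f$.

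Granting (a): for (b), both $\p(-)$ and $L(-)$ are visibly inclusion-reversing, so (a) is precisely the defining condition of an antitone Galois connection. For (c), $x\in L(\p(W))$ means $x=0$ or $\frac1x\notin\p(W)$, and by Proposition~\ref{1}(\ref{it:pWGW}) the latter holds iff $x\in G(W)$; since $0\in G(W)$ (a factroid is an additive subgroup), this gives $L(\p(W))=G(W)$. For (d) I would run the chain $x\in L(\p_f)\iff\frac1x\notin\p_f\iff\p_f\subseteq\p_x\iff\frac{x}{f^e}\in R\text{ for some }e\ge1\iff x\in G(f^e)\text{ for some }e$, where the second step is again the maximality characterization of $\p_x$ as in (a), the third is Lemma~\ref{radicals} (applied with its two polynomials taken so that it reads ``$\p_f\subseteq\p_x$ iff $\frac{x}{f^e}\in R$ for some $e$''), and the last uses $G_R(f^e)=G(f^e)$ (equivalently Theorem~\ref{thm:factroids}). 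For (f): a subring is in particular a multiplicative submonoid, so (c) yields $L(\p(C))=G(C)$, and since $C$ is itself a regular factroid containing $C$ we have $G(C)=C$.

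The main obstacle is (e), $\p(L(\q))=\q$ for $\q$ prime, as this is the only step not already forced by (a). The inclusion $\q\subseteq\p(L(\q))$ follows from (a) by substituting $S=\q$, $W=L(\q)$ into $W\subseteq L(S)\iff S\subseteq\p(W)$ (legitimate since $L(\q)$ is a subring by Proposition~\ref{4}). For $\p(L(\q))\subseteq\q$, I would take $\alpha\in\p(L(\q))$, dispose of the trivial case $\alpha=0$, and write $\alpha=\sum_{i=1}^{n}\frac1{d_i}$ with $d_i\in D^\circ$ and $n$ \emph{minimal} among all expressions of $\alpha$ as a sum of reciprocals of nonzero elements. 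If some $\frac1{d_j}\notin\q$, then $d_j\in L(\q)^\circ$, so $\p_{d_j}$ is one of the primes intersected to form $\p(L(\q))$ and hence $\alpha\in\p_{d_j}$; but then the minimality of $n$ lets us apply Proposition~\ref{pr:decomposeelement} to the prime $\p_{d_j}$, forcing $\frac1{d_i}\in\p_{d_j}$ for all $i$ and in particular $\frac1{d_j}\in\p_{d_j}$ -- contradicting the definition of $\p_{d_j}$. Hence every $\frac1{d_i}\in\q$, so $\alpha\in\q$. The one point to be careful about is passing to a globally minimal reciprocal representation before invoking Proposition~\ref{pr:decomposeelement}; everything else is bookkeeping.

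To assemble the correspondence, observe that the image of $\p(-)$ consists of primes by Proposition~\ref{1}(\ref{it:pWprime}) and contains every prime by (e), so it equals $\Spec R$; dually, the image of $L(-)$ consists of regular factroid subrings by Proposition~\ref{4} and contains every such subring by (f), so it equals the set of regular factroid subrings of $D$. By the standard theory of (antitone) Galois connections, $\p(-)$ and $L(-)$ then restrict to mutually inverse inclusion-reversing bijections between $\Spec R$ and the regular factroid subrings of $D$. Finally, if $(C,\q)$ is such a pair then $C=L(\q)$, so $R(C)=R(L(\q))$ and the asserted isomorphism $R(C)\into R\onto R/\q$ is exactly Theorem~\ref{thm:modp} applied to the prime $\q$.
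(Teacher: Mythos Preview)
Your proof is correct, and the overall architecture matches the paper's, but two of the key steps are argued by genuinely different routes.

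For (a), the paper proves $W\subseteq L(S)\Rightarrow S\subseteq\p(W)$ by taking $\alpha\in S$, choosing a prime $\q$ in the union containing it, writing $\alpha$ as a minimal sum of reciprocals, invoking Proposition~\ref{pr:decomposeelement} to force each $\frac1{x_i}\in\q$, and then using Proposition~\ref{1}(\ref{it:pWGW}) together with the regularity of $L(S)$ (Proposition~\ref{4}) to push each $\frac1{x_i}$ into $\p(W)$. Your reduction to the pointwise statement $\frac1f\notin S\iff S\subseteq\p_f$, settled via the \emph{uniqueness} clause in Definition/Proposition~\ref{pr:pxexists}, is shorter and avoids both Proposition~\ref{pr:decomposeelement} and Proposition~\ref{4} at this stage.

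For (e), the roles are reversed. The paper argues the contrapositive: given $\alpha\notin\q$ it splits $\alpha=\alpha'+\beta$ with $\beta\in\q$, sets $x=\prod x_i$ over the terms outside $\q$, and exhibits $\p_x\supseteq\p(L(\q))$ with $\alpha\notin\p_x$ by a direct computation. Your argument instead starts from $\alpha\in\p(L(\q))$, chooses a minimal reciprocal expression, and uses Proposition~\ref{pr:decomposeelement} (now applied to $\p_{d_j}$ rather than to $\q$) to derive the contradiction $\frac1{d_j}\in\p_{d_j}$. Both work; yours is slicker here but leans on Proposition~\ref{pr:decomposeelement}, which the paper had already ``spent'' back in (a).

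The remaining items (b), (c), (d), (f) and the final assembly via Theorem~\ref{thm:modp} are handled essentially as in the paper; your treatment of (d) bypasses (c) and goes straight through Lemma~\ref{radicals} and Theorem~\ref{thm:factroids}, which is a harmless reordering.
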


\begin{proof}
\ref{it:containments}: Let $S \in \cU$ and $W \in \cM$. Then $S = \displaystyle \bigcup_{\q \in X} \q$ for some subset $X \subseteq \Spec R$.

Suppose $W \subseteq L(S)$.  Since $L(S)$ is a regular factroid of $D$ by Proposition \ref{4}, it follows that $G(W) \subseteq L(S)$.  Let $\alpha \in S$.  Then there is some $\q \in X$ with $\alpha \in \q$. By Proposition \ref{pr:decomposeelement}, we have $\alpha = \sum_{i=1}^t \frac 1{x_i}$ for some $x_i \in D^\circ$ such that $\frac 1{x_i} \in \q$.  But then $x_i \notin L(S)$, whence $x_i \notin G(W)$, so by Proposition \ref{1}, $\frac 1{x_i} \in \p(W)$.  Since the latter is an ideal, it follows that $\alpha = \sum_i 1/x_i \in \p(W)$. Thus, $S \subseteq \p(W)$.

Conversely, suppose $S \subseteq \p(W)$.  Let $f \in W$.  If $f \notin L(S)$, then $\frac 1f \in S$, so that since $S \subseteq \p(W)$, we have $\frac 1f \in \p(W) \subseteq \p_f$, a contradiction.  Hence, $f\in L(S)$, so that $W \subseteq L(S)$. \\

\ref{it:gc}: It suffices to show that $L(-)$ and $\p(-)$ are order-reversing.
Suppose $S \subseteq T$ are unions of prime ideals of $R(D)$. Let $0 \neq x\in L(T)$.  Then $\frac 1x \notin T$, so $\frac 1x \notin S$ since $S \subseteq T$, whence $x\in L(S)$. Thus, $L(T) \subseteq L(S)$.
Now suppose $V \subseteq W \subseteq D$ are multiplicative submonoids.  Then $\p(V) = \bigcap_{f \in V} \p_f \supseteq \bigcap_{f \in W} \p_f = \p(W)$. \\

\ref{it:LpG}: By \ref{it:containments}, $W \subseteq L(\p(W))$, which by Proposition~\ref{4} is a regular factroid of $D$. Hence, $G(W) \subseteq L(\p(W))$.  Conversely, let $0\neq t \in L(\p(W))$. Then $\frac 1t \notin \p(W)$, so by  Proposition~\ref{1}(2), $t \in G(W)$. \\

\ref{it:LpfGfe}: Let $f\in D^\circ$, and set $W := \{f^e \mid e\in \N_0\}$.  Then for any $e\geq 1$, $\p_{f^e}=\p_f$, and $\p_{f^0}=\p_1=\m_{R}$, so $\bigcap_{x\in W} \p_x = \bigcap_{e\geq 1} \p_{f^e} = \p_f$.  Thus by \ref{it:LpG}, $L(\p_f) = G(W)$.  Clearly $G(f^e) \subseteq G(W)$ for each $e$.  Conversely, for any $0 \neq x \in L(\p_f)$, we have $\frac 1x \notin \p_f$, so by Lemma~\ref{radicals}, $\frac 1{f^e} \in \frac 1x R$ for some $e\in \N$, which means that $\frac x{f^e} \in R$, so by Theorem~\ref{thm:factroids}, $x\in G(f^e)$.  Thus, $G(W) = \bigcup_e G(f^e)$. \\

\ref{it:pLq}: The containment $\q \subseteq \p(L(\q))$ follows from \ref{it:containments}. 
For the opposite containment, observe that $\p(L(\q)) = \bigcap_{f \in L(\q)} \p_f = \bigcap\{\p_f \mid f \in D^\circ,\ \frac 1f \notin \q\}$.  Let $\alpha \in R \setminus \q$. Write $\alpha = \sum_{i=1}^\ell \frac 1{x_i}$, $x_i \in D^\circ$. By rearranging terms if necessary, we have $\frac 1{x_i} \notin \q$ for $1\leq i \leq m$, $\frac 1{x_i} \in \q$ for $m+1 \leq i \leq \ell$, and $m\geq 1$.  Set $\alpha' := \sum_{i=1}^m \frac 1{x_i}$ and $\beta := \alpha - \alpha' = \sum_{i=m+1}^\ell \frac 1{x_i}$.  For each $i\leq m$, we have $\frac 1{x_i} \notin \p_{x_i}$.  Set $x := \prod_{i=1}^m x_i$. Then for each $i \leq m$, $\p_{x_i} \supseteq \p_x$, so $\frac 1{x_i} \notin \p_x$, and $\frac 1x = \prod_{i=1}^m \frac 1{x_i} \notin \q$, whence $\p_x \supseteq \p(L(\q))$.  Set $y := \sum_{i=1}^m \prod_{j\neq i, j\leq m} x_j$.  We have $\alpha' = y/x$.  Since $\alpha' \neq 0$ (otherwise $\alpha=\beta \in \q$), we have $y\neq 0$.  Thus, $\frac 1y \alpha' = \frac 1x \notin \p_x$, so $\alpha' \notin \p_x$.  Since $\beta \in \q \subseteq \p_x$, we have $\alpha = \alpha'+\beta \notin \p_x$. Thus, $\alpha \notin \p(L(\q))$. \\

\ref{it:LpC}: An application of \ref{it:containments} shows that $C \subseteq L(\p(C))$.
Conversely, let $0\neq x\in L(\p(C))$.  Then $\frac 1x \notin \p(C)$, so by Proposition~\ref{1}(2), $x \in G(C) = C$, with equality by assumption on $C$. \\

The final statement follows from Theorem~\ref{thm:modp}.
\end{proof}

\begin{cor}
Any nonzero prime of $R(D)$ is an intersection of ideals of the form $\p_f$, $f \in D^\circ$.
\end{cor}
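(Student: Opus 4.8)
The plan is to read this off immediately from Theorem~\ref{5}, specifically from part~\ref{it:pLq} together with the definition of the map $\p(-)$.

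First I would fix a nonzero prime $\q$ of $R=R(D)$ and form the subset $L(\q)=\{0\neq x\in D\mid \tfrac1x\notin\q\}\cup\{0\}$ from Definition~\ref{def:theconnection}. Applying Proposition~\ref{4} to the union $S=\q$ (i.e.\ with $X=\{\q\}$ a singleton) shows that $L(\q)$ is a subring of $D$, hence in particular a multiplicative submonoid, so $\p(L(\q))$ is well defined. By part~\ref{it:pLq} of Theorem~\ref{5}, $\p(L(\q))=\q$.

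Next I would simply unwind the definition of $\p(-)$: by Definition~\ref{def:theconnection}, $\p(L(\q))=\bigcap_{f\in L(\q),\ f\neq 0}\p_f$. Combining the two equalities gives
\[
\q=\bigcap\Bigl\{\p_f \,\Bigm|\, f\in D^\circ,\ \tfrac1f\notin\q\Bigr\},
\]
which exhibits $\q$ as an intersection of ideals of the form $\p_f$ with $f\in D^\circ$, as desired. (The index set is nonempty: since $\q$ is proper, $1=\tfrac11\notin\q$, so $f=1$ occurs among the indices, contributing the factor $\p_1=\m$.)

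There is no real obstacle here once Theorem~\ref{5} is available. The only points needing any care are (i) that $L(\q)$ is a legitimate argument for $\p(-)$, which is exactly Proposition~\ref{4}, and (ii) that the same argument in fact also covers the zero ideal, since $L((0))=D$ and then part~\ref{it:pLq} of Theorem~\ref{5} forces $\bigcap_{f\in D^\circ}\p_f=(0)$ — so the hypothesis that $\q$ be nonzero is stated only for emphasis and consistency with the earlier structural results on $\Spec R(D)$.
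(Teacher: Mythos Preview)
Your proposal is correct and is precisely the intended derivation: the paper gives no separate proof of the corollary because it is meant to follow immediately from Theorem~\ref{5}\ref{it:pLq} together with the definition of $\p(-)$, exactly as you spell out. Your side remark that the argument also handles $\q=(0)$ is accurate and worth noting.
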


\section*{Acknowledgments} 
We are grateful for many enlightening discussions on the material of this paper with Alan Loper and Dario Spirito. In the broader community, we are thankful to Justin Chen and Will Sawin for their help in establishing the distinction between E-simplicity and unit additivity (see Example~\ref{ex:ChenSawin}).

\providecommand{\bysame}{\leavevmode\hbox to3em{\hrulefill}\thinspace}
\providecommand{\MR}{\relax\ifhmode\unskip\space\fi MR }
\providecommand{\MRhref}[2]{%
  \href{http://www.ams.org/mathscinet-getitem?mr=#1}{#2}
}
\providecommand{\href}[2]{#2}


\begin{thebibliography}{GLO26}

\bibitem[Ber15]{Berg-genubook}
George~M. Bergman, \emph{An invitation to general algebra and universal
  constructions}, second ed., Universitext, Springer, Cham, 2015.

\bibitem[BG09]{BrGu-polybook}
Winfried Bruns and Joseph Gubeladze, \emph{Polytopes, rings, and {$K$}-theory},
  Springer Monographs in Mathematics, Springer, Dordrecht, 2009.

\bibitem[Che25]{Ch-pers25}
Justin Chen, personal communication, 2025.

\bibitem[EE26]{Elnme-factroids}
Jesse Elliott and Neil Epstein, \emph{Additive subgroups of a module that are
  saturated with respect to a subset of the ring}, The ideal theory and
  arithmetic of rings, monoids, and semigroups, Contemp. Math., vol. 836, Amer.
  Math. Soc., Providence, RI, 2026, pp.~195--231.

\bibitem[EGL25]{nmeGuLo-poly}
Neil Epstein, Lorenzo Guerrieri, and K.~Alan Loper, \emph{The reciprocal
  complement of a polynomial ring in several variables over a field}, Pacific
  J. Math. \textbf{338} (2025), no.~2, 267--293.

\bibitem[Eps24]{nme-Edom}
Neil Epstein, \emph{Egyptian integral domains}, Ric. Mat. \textbf{73} (2024),
  no.~5, 2901--2910.

\bibitem[Eps25]{nme-Euclidean}
\bysame, \emph{The unit fractions from a {E}uclidean domain generate a {DVR}},
  Ric. Mat. \textbf{74} (2025), no.~4, 2307--2313.

\bibitem[ES25]{nmeSh-unitadd}
Neil Epstein and Jay Shapiro, \emph{Rings where a non-nilpotent sum of units is
  a unit}, J. Algebra \textbf{672} (2025), 120--144, {\em erratum} {\bf 679}
  (2025), 65--66.

\bibitem[GLO26]{GLO-Egypt}
Lorenzo Guerrieri, K.~Alan Loper, and Greg Oman, \emph{From ancient {E}gyptian
  fractions to modern algebra}, J. Algebra Appl. \textbf{25} (2026), no.~8,
  Paper No. 2650078, 17 pp.

\bibitem[Gue25]{Gu-recomp}
Lorenzo Guerrieri, \emph{The reciprocal complements of classes of integral
  domains}, J. Algebra \textbf{682} (2025), 188--214.

\bibitem[Kap62]{Kap-Rseq}
Irving Kaplansky, \emph{{$R$}-sequences and homological dimension}, Nagoya
  Math. J. \textbf{20} (1962), 195--199.

\bibitem[Kap70]{Kap-CR}
\bysame, \emph{Commutative rings}, Allyn and Bacon Inc., Boston, 1970.

\bibitem[Saw]{Saw-MOthisring}
Will Sawin, \emph{What is the group of units of this ring?}, MathOverflow,
  URL:https://mathoverflow.net/q/502075 (version: 2025-10-25).

\bibitem[Spi25]{Spi-recocurve}
Dario Spirito, \emph{The reciprocal complement of a curve}, Ann. Mat. Pura
  Appl. (4) (2025), 6 pages, doi:10.1007/s10231-025-01617-5.

\end{thebibliography}
\end{document}